\theoremstyle{plain}
\newtheorem{theorem}{Theorem}[section]
\newtheorem{corollary}[theorem]{Corollary}
\newtheorem{proposition}[theorem]{Proposition}
\newtheorem{lemma}[theorem]{Lemma}
\newtheorem{utheorem}{\textrm{\textbf{Theorem}}}
\theoremstyle{definition}
\newtheorem{definition}[theorem]{Definition}
\newtheorem{notation}[theorem]{Notation}
\newtheorem{remark}[theorem]{Remark}
\numberwithin{equation}{section}
\numberwithin{table}{section}
\newcommand{\adj}{\mathop{\mathrm{adj}}}
\newcommand{\ba}{\mathbf{a}}
\newcommand{\bA}{\mathbf{A}}
\newcommand{\balpha}{{\boldsymbol{\alpha}}}
\newcommand{\bbeta}{{\boldsymbol{\beta}}}
\newcommand{\bB}{\mathbf{B}}
\newcommand{\bC}{\mathbf{C}}
\newcommand{\be}{\mathbf{e}}
\newcommand{\bepsilon}{{\boldsymbol{\epsilon}}}
\newcommand{\bk}{\mathbf{k}}
\newcommand{\bm}{\mathbf{m}}
\newcommand{\bs}{\mathbf{s}}
\newcommand{\bu}{\mathbf{u}}
\newcommand{\bv}{\mathbf{v}}
\newcommand{\bw}{\mathbf{w}}
\newcommand{\bx}{\mathbf{x}}
\newcommand{\bz}{\mathbf{z}}
\newcommand{\bone}{\mathbf{1}}
\newcommand{\bzero}{\mathbf{0}}
\newcommand{\cS}{\mathcal{S}}
\newcommand{\cW}{\mathcal{W}}
\newcommand{\diag}{\mathop{\mathrm{diag}}}
\newcommand{\fin}{\mathrm{fin}}
\newcommand{\Id}{\mathrm{Id}}
\newcommand{\rank}{\mathop{\mathrm{rank}}}
\newcommand{\sP}{\mathscr{P}}
\newcommand{\symdiff}{\bigtriangleup}
\newcommand{\up}[1][]{\Sigma^\uparrow_{\pi_{#1}}}
\newcommand{\wt}[1]{\widetilde{#1}}
\newcommand{\C}{\mathbb{C}}
\newcommand{\R}{\mathbb{R}}
\newcommand{\Z}{\mathbb{Z}}
\newcommand{\tup}[1]{\textup{#1}}
\newcommand{\eps}{t_0}
\renewcommand{\subset}{\subseteq}
\begin{document}
\title[Negativity-preserving transforms of tuples of symmetric
matrices]{Negativity-preserving transforms of\\tuples of symmetric matrices}

\author{Alexander Belton}
\address[A.~Belton]{School of Engineering, Computing and Mathematics,
University of Plymouth, UK}
\email{\tt alexander.belton@plymouth.ac.uk}

\author{Dominique Guillot}
\address[D.~Guillot]{University of Delaware, Newark, DE, USA}
\email{\tt dguillot@udel.edu}

\author{Apoorva Khare}
\address[A.~Khare]{Department of Mathematics, Indian Institute of
Science, Bangalore, India and Analysis \& Probability Research Group,
Bangalore, India}
\email{\tt khare@iisc.ac.in}

\author{Mihai Putinar}
\address[M.~Putinar]{University of California at Santa Barbara, CA, USA} 
\email{\tt mputinar@math.ucsb.edu}

\date{27th February 2026}

\begin{abstract}
Compared to the entrywise transforms which preserve positive
semidefiniteness, those leaving invariant the inertia of symmetric
matrices reveal a surprising rigidity. We first obtain the classification
of negativity preservers by combining recent advances in matrix analysis
with some novel arguments relying on well chosen test matrices,
Sidon sets from number theory, and analytic properties of
absolutely monotone functions. We continue with the analogous
classification in the multi-variable setting, revealing for the
first time a striking separation of variables, with absolute
monotonicity on one side and only homotheties on the other. We
conclude with the complex analogue of this result.
\end{abstract}

\keywords{symmetric matrix, inertia, entrywise transform,
Schoenberg's theorem, absolutely monotone function, Pontryagin space}

\subjclass[2010]{%
15B48 (primary); 
15A18, 
26A48, 
32A05 
(secondary)}

\maketitle

\settocdepth{section}
\tableofcontents

\section{Introduction and main results}\label{S1}

A subtitle of this article could be ``\emph{Indefinite variations on a
theme by Schoenberg}.'' We honor here Schoenberg the mathematician, whose
legacy is arguably comparable to that of his homonym, the artist.
As much as Arnold Schoenberg's musical innovation has shaped the last
century, the same applies to Isaac Schoenberg in the mathematical milieu.
We touch upon below only a concise, but profound, 1942 contribution of
the latter~\cite{Schoenberg42}: a definitive description of positive
definite functions defined on Euclidean spheres. This theorem synthesizes
several decades of metric geometry, a topic much cultivated by Schoenberg
(partially in the company of von Neumann). A second motivation comes from
his early studies of total positivity, in themselves rooted in the
analysis of the oscillatory behavior of analytic functions or matrices.
From this path of discovery the concept of splines would later emerge. A
reformulation of Schoenberg's classification of positive definite
functions on spheres turned out to be a guiding light for generations to
come. We state this theorem in a form which includes some slight
enhancements accumulated over time.

Henceforth, a function $f : I \to \R$ acts entrywise on a matrix
$A = (a_{ij})$ with entries in $I$ via the prescription
$f[A] := \bigl( f( a_{i j} ) \bigr)$.

\begin{theorem}[\cite{Schoenberg42, Rudin59, BGKP-hankel}]\label{Tschoenberg}
Let $I := (-\rho,\rho)$, where $0 < \rho \leq \infty$.
Given a function $f : I \to \R$, the following are equivalent.
\begin{enumerate}
\item The function $f$ acts entrywise to preserve the set of
positive semidefinite matrices of all dimensions with entries in~$I$.
\item The function $f$ is \emph{absolutely monotone}, that is,
$f(x) = \sum_{n=0}^\infty c_n x^n$ for all $x \in I$
with $c_n \geq 0$ for all $n$.
\end{enumerate}
\end{theorem}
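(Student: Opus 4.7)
The plan is to prove the two implications separately, with (1) $\Rightarrow$ (2) carrying all the weight.

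For (2) $\Rightarrow$ (1), I would invoke the Schur product theorem. If $f(x) = \sum_{n \geq 0} c_n x^n$ on $I$ with $c_n \geq 0$, and $A$ is positive semidefinite with entries in $I$, then each Hadamard power $A^{\circ n}$ is positive semidefinite, so the partial sums $\sum_{n = 0}^N c_n A^{\circ n}$ are conic combinations of positive semidefinite matrices and hence themselves positive semidefinite. Entrywise, each scalar series $\sum_n c_n a_{ij}^n$ converges absolutely since $|a_{ij}| < \rho$, so the partial sums converge entrywise to $f[A]$; positive semidefiniteness is preserved by limits in the closed PSD cone of the fixed-dimensional matrix space.

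For (1) $\Rightarrow$ (2), I would proceed in three stages. First, small test matrices yield elementary constraints: the $1 \times 1$ matrix $(x)$ with $x \in [0,\rho)$ forces $f \geq 0$ on $[0,\rho)$, and $2 \times 2$ tests of the form $\bigl(\begin{smallmatrix} a & b \\ b & a \end{smallmatrix}\bigr)$ give monotonicity of $f$ on $[0,\rho)$ together with control over $f$ on the negative part of $I$. Second, the hypothesis applied to rank-one matrices $\bu \bu^T$ with $\bu \in \R^n$ becomes the requirement that $\bigl( f(u_i u_j) \bigr)_{i,j}$ be positive semidefinite; specializing $\bu$ to geometric or Vandermonde-type progressions produces Hankel-style moment matrices whose positivity, combined with the freedom to vary $\bu$ and $n$, promotes $f$ to be real-analytic on $I$. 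Third, once smoothness is in hand, the non-negativity of $c_k := f^{(k)}(0)/k!$ follows by expanding $f[t \bu \bu^T]$ in powers of $t$ as $t \to 0^+$: the lowest-order surviving term is proportional to $c_k \bu^{\circ k} (\bu^{\circ k})^T$, which must be positive semidefinite as a rescaled limit of PSD matrices, and since $\bu^{\circ k} (\bu^{\circ k})^T$ is a non-trivial rank-one PSD matrix for generic $\bu$, this forces $c_k \geq 0$.

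The main obstacle is the middle stage: bootstrapping finite-dimensional PSD preservation into honest analyticity of $f$. This is exactly where the classical Schoenberg--Rudin proofs diverge, applying either harmonic analysis on spheres (Schoenberg) or elementary Hankel matrix positivity arguments (Rudin); the cited \cite{BGKP-hankel} refines the input to cover the present formulation uniformly over $0 < \rho \leq \infty$. The case $\rho = \infty$ furthermore requires the power series obtained to have infinite radius of convergence, which one secures by running the preceding argument on each compact subinterval of $\R$ and checking that the coefficients are independent of the subinterval.
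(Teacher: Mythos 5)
Your proposal is reviewed against a theorem the paper does not actually prove: Theorem~\ref{Tschoenberg} is stated as background with citations to Schoenberg, Rudin, and \cite{BGKP-hankel}, and the paper uses it as a black box (e.g.\ in Step~1 of the proof of Theorem~A, in Lemma~\ref{Lblock}, and elsewhere). So there is no ``paper's own proof'' to compare against; I will assess your sketch on its merits.

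The direction $(2)\Rightarrow(1)$ is complete and correct: Schur's product theorem gives positive semidefiniteness of each Hadamard power, conic combinations stay in the PSD cone, and the cone is closed so entrywise limits of partial sums remain PSD.

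For $(1)\Rightarrow(2)$ you correctly identify the three-stage architecture used in the modern literature (elementary constraints from $1\times1$ and $2\times2$ tests; Hankel/Vandermonde rank-one tests to bootstrap regularity; Taylor-coefficient sign extraction once smoothness is available). But as you yourself flag, the middle stage --- promoting finite-dimensional PSD preservation to real-analyticity --- is where the entire difficulty resides, and you defer it wholesale to the cited sources. This is a genuine gap in the sense that the proposal is not self-contained, but it is also exactly the step the paper itself does not reprove: the theorem is cited, not derived. Two smaller points worth tightening. First, in stage~3 the phrase ``the lowest-order surviving term is proportional to $c_k$'' is imprecise; the clean argument is to take the smallest $k$ with $c_k<0$, choose $\bu\in\R^n$ ($n>k$) with distinct entries so that $\bone,\bu^{\circ 1},\ldots,\bu^{\circ k}$ are linearly independent (Vandermonde), pick $\bv$ orthogonal to $\bu^{\circ 0},\ldots,\bu^{\circ(k-1)}$ but not to $\bu^{\circ k}$, and observe that $\bv^T f[t\,\bu\bu^T]\bv = c_k t^k |\bv^T\bu^{\circ k}|^2 + o(t^k) < 0$ for small $t>0$, a contradiction. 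Second, the $2\times2$ test $\bigl(\begin{smallmatrix} a & b \\ b & a \end{smallmatrix}\bigr)$ with $a\geq|b|$ yields $f(a)\geq|f(b)|$, which gives monotonicity on $[0,\rho)$ and the bound $|f(-x)|\leq f(x)$ for $x\in[0,\rho)$; it would help to state this explicitly, as it is precisely how the negative half of the domain is controlled before analyticity is known.
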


We use the term \emph{absolutely monotone} to describe
functions which have a power-series representation with non-negative
Maclaurin coefficients, although the non-negativity of derivatives holds
only for a certain subset of the domain (which is~$[ 0, \rho )$ above).
For more on this, see Appendix~\ref{Appendix}.

Entrywise transforms that preserve positive definiteness have been
investigated within the context of linear algebra or harmonic analysis by
many authors: Rudin \cite{Rudin59}, Herz \cite{Herz63}, Loewner and
Horn \cite{horn}, Christensen and Ressel \cite{ChrRes1,ChrRes2} (who also
have a relevant monograph with Berg \cite{BCR-book}), Vasudeva \cite{vasudeva79},
and FitzGerald, Micchelli and Pinkus \cite{fitzgerald}, to name just some of those
whose work in this area appeared in the second half of the 20th century. More recently,
we have articles by Fallat, Johnson and Sokal
\cite{FJS}, Jain \cite{Jain,Jain2}, and Vishwakarma \cite{Vish}, in
addition to our previous publications \cite{BGKP-fixeddim,BGKP-hankel}
and those with additional co-authors\ \cite{GKR-lowrank,KT}). We note also that
Moln\'ar \cite{Molnar} has studied similar transforms in the context of
operator algebras.

Interest in this subject was reinvigorated by statisticians.
Their quest was prompted by the implementation of thresholding or other
entrywise operations aimed at regularizing large correlation matrices
which are near, but not, sparse. Details about this line of enquiry
appear in \cite{GuillotRajaratnam2012}, \cite[Section 5]{BGKP-survey2},
and the monograph \cite{Khare}.

The present article is a part of a systematic study and classification of
inner transformations of structured matrices and kernels. Our earlier
works dealt with entrywise positivity preservers: see, in particular,
\cite{BGKP-hankel} and \cite{BGKP-TN}. We now turn to indefinite
quadratic forms. In addition to the unquestionable theoretical interest,
a strong motivation to pursue this investigation comes from the recently
uncovered benefits of embedding data, and particularly big data, in
hyperbolic space. This is a lively topic in full spate today, surfacing
in an array of areas such as image and language processing
\cite{DGRS,Kh-IEEE}, finance \cite{KRN}, social networks \cite{Verbeek},
and geographic routing \cite{Kleinberg}. We do not explore such
applications in the present work, focusing solely on describing the
entrywise preservers of matrices with negative-eigenvalue constraints.
The surprising rigidity revealed by the classification completed
below would suggest profound impacts on data structuring. We will
elaborate on such applications in a separate article.

We provide herein a comprehensive extension to previous Schoenberg-type 
theorems, old and new, such as  \cite{BGKP-hankel}. To be more specific,
Theorem~\ref{Tschoenberg} classifies all entrywise inner transforms of
self-adjoint matrices with no negative eigenvalues, tacitly allowing 
the nullity or number of positive eigenvalues to vary.
(Throughout this work, eigenvalues are counted with multiplicity.)
Instead, we now seek operations which do not change the inertia of
real symmetric matrices. More generally, we provide complete
answers to three questions.
\begin{itemize}
\item What are the entrywise transforms sending matrices with at most $k$
negative eigenvalues to ones with at most $l$ negative eigenvalues, for
arbitrary non-negative integers $k$ and $l$?

\item What are the preservers of inertia on such sets of matrices?
(We recall that the \emph{inertia} of a complex
Hermitian matrix $A$ is the triple $( n_+, n_0, n_- )$ of non-negative integers
corresponding to the number of positive, zero and negative eigenvalues of $A$.
Some authors prefer the term \emph{signature} for this triple, but as this is also used to
describe the difference $n_+ - n_-$ we will prefer the former terminology.)
\item What are the multi-variable analogues of these questions, in both
the real and complex settings?
\end{itemize}
The classes of transforms that answer these questions turn out to be far
smaller than the collection of functions appearing in
Theorem~\ref{Tschoenberg}.

\subsection{One-variable inertia preservers.}
A step further from Schoenberg's Theorem is the description of 
entrywise transforms that preserve the inertia of matrices with precisely
$k$ negative eigenvalues for some choice of integer $k$. To state our
first complete result precisely, we introduce the following notation.

Given non-negative integers $n$ and $k$, with $n \geq 1$ and $k \leq n$,
we let~$\cS_n^{(k)}(I)$ denote the set of $n \times n$ symmetric matrices
with entries in $I \subseteq \R$ having exactly $k$ negative eigenvalues;
here and throughout, eigenvalues are counted with multiplicity. Let
\[
\cS^{(k)}(I) := \bigcup_{n = k}^\infty \cS_n^{(k)}(I)
\]
be the set of real symmetric matrices of arbitrary size with
entries in $I$ and exactly $k$ negative eigenvalues.
For brevity we let $\cS^{(k)} := \cS^{(k)}(\R)$ and
$\cS_n^{(k)} := \cS_n^{(k)}(\R)$.

Note that, for any $n \geq 1$, the sets
$\cS_n^{(0)}$, $\cS_n^{(1)}$, $\ldots,$ $\cS_n^{(n)}$ are pairwise
disjoint and partition the set of $n \times n$ real symmetric matrices.

We now assert

\begin{theorem}\label{Tinertia}
Let $I := (-\rho,\rho)$, where $0 < \rho \leq \infty$, and let $k$ be a
non-negative integer.
Given a function $f : I \to \R$, the following are equivalent.
\begin{enumerate}
\item The entrywise transform $f[-]$ preserves the inertia of all
matrices in $\cS^{(k)}(I)$.

\item The function is a positive homothety: $f(x) \equiv c x$ for some
constant $c > 0$.
\end{enumerate}
\end{theorem}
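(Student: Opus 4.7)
The reverse implication is immediate: if $f(x) = cx$ with $c > 0$, then $f[A] = cA$ has the same inertia as $A$. For the forward direction, my plan is to treat $k = 0$ and $k \geq 1$ separately, reducing the latter to the former via a block-diagonal construction.

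For $k = 0$, the hypothesis forces $f[-]$ to preserve positive semidefiniteness, so Theorem~\ref{Tschoenberg} gives $f(x) = \sum_{n \geq 0} c_n x^n$ with all $c_n \geq 0$; testing on $A = 0_n$ further forces $f(0) = 0$, hence $c_0 = 0$. The main step is then to use the rank-$2$ PSD matrix
\[
A_u := u \begin{pmatrix} 1 & 0 & 1 \\ 0 & 1 & 1 \\ 1 & 1 & 2 \end{pmatrix},
\quad u \in (0, \rho/2),
\]
whose spectrum is $\{0, u, 3u\}$. A direct computation using $f(0) = 0$ yields $\det f[A_u] = f(u)^2 \bigl( f(2u) - 2 f(u) \bigr)$, while the top-left $2 \times 2$ minor of $f[A_u]$ gives $\rank f[A_u] \geq 2$. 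Rank preservation therefore forces $f(2u) = 2 f(u)$ on $(0, \rho/2)$, so matching power-series coefficients gives $c_n (2^n - 2) = 0$ for every $n$, and hence $c_n = 0$ for $n \geq 2$. Thus $f(x) = c_1 x$ on $(-\rho, \rho)$, with $c_1 > 0$ following from the $1 \times 1$ test $A = (u)$.

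For $k \geq 1$, I would first establish $f(0) = 0$ and $f(x) < 0$ on $(-\rho, 0)$ by testing on $A = 0_m \oplus (-t I_k) \in \bs_{m+k}^{(k)}(I)$, which has inertia $(0, k, m)$. Here $f[A]$ is a linear combination of the all-ones matrix $J_{m+k}$ and a rank-$k$ diagonal projection, so has rank at most $k+1$, and matching its spectrum to the required $k$ negative eigenvalues pins down $f(0)$ and $f(-t)$. For $k \geq 2$ this gives $f(0) = 0$ and $f(-t) < 0$ directly; for $k = 1$ an exceptional case $f(-t) = f(0) < 0$ also satisfies the rank constraint, which I would rule out using the $2 \times 2$ matrix $\bigl( \begin{smallmatrix} 0 & u \\ u & 0 \end{smallmatrix} \bigr) \in \bs_2^{(1)}(I)$ with $u \in (-\rho, 0)$, whose inertia $(1, 1, 0)$ is destroyed by such an $f$.

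With $f(0) = 0$ in hand, the matrix $A = B \oplus (-t I_k)$ for any PSD $B$ with entries in $I$ satisfies $f[A] = f[B] \oplus f(-t) I_k$, since the off-diagonal zero blocks remain zero. As $f(-t) < 0$ contributes exactly the required $k$ negative eigenvalues, $f[B]$ must share the inertia of $B$, so $f$ preserves the inertia of every PSD matrix; the $k = 0$ case then completes the proof. The main obstacle I anticipate is the $k = 1$ subcase, where the eigenvalue bookkeeping admits a spurious solution that must be excluded with a separate negative-entry test; by contrast, the $k = 0$ rank-$2$ step is comparatively clean once the correct test matrix is chosen.
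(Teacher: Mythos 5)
Your proof is correct, but it takes a genuinely different route from the paper. The paper derives Theorem~\ref{Tinertia} for $k \geq 1$ as a corollary of the much heavier Theorem~\ref{Tstrongest}: applying that result with $l = k$ forces $f$ to be either constant or of the form $f(0) + cx$ with $f(0) \geq 0$ and $c > 0$, and then the constants are eliminated and $f(0) = 0$ is extracted by an ad hoc test; for $k = 0$ the paper argues separately from Schoenberg's theorem, first using a rank-one test matrix to show the power series has at most one non-zero term and then a rank-two Vandermonde test to pin down the exponent. You invert this structure: you handle $k = 0$ first, using a single rank-two test matrix with off-diagonal zeros so that $f(0) = 0$ kills the contribution of those entries, extracting the functional equation $f(2u) = 2f(u)$ directly from the vanishing determinant (which then forces $c_n = 0$ for $n \geq 2$); and for $k \geq 1$ you use the test matrices $\bzero_m \oplus (-t \Id_k)$ to force $f(0) = 0$ and $f < 0$ on $(-\rho, 0)$, then reduce to the PSD case via the block construction $B \oplus (-t\Id_k)$, which splits cleanly under $f[-]$ precisely because $f(0) = 0$. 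The $k = 1$ exceptional case, where the rank and spectrum constraints alone admit a constant negative $f$ on $(-\rho, 0]$, is correctly isolated and dispatched with the $2\times 2$ test $\bigl(\begin{smallmatrix} 0 & u \\ u & 0 \end{smallmatrix}\bigr)$. Your argument is more elementary and self-contained as a standalone proof of this theorem, avoiding all of the apparatus in Theorem~\ref{Tstrongest}; the paper's version is shorter only because that apparatus is already in place for the other main results. One small point of exposition: the sentence ``the top-left $2 \times 2$ minor of $f[A_u]$ gives $\rank f[A_u] \geq 2$'' should be phrased the other way around --- inertia preservation forces $\rank f[A_u] = 2$, which in turn forces $f(u) \neq 0$ (since $f(u) = 0$ would make the first two rows vanish, giving rank $\leq 1$), and only then is $\det f[A_u] = 0$ equivalent to $f(2u) = 2f(u)$.
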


Thus, the class of inertia preservers for the collection of real symmetric
matrices of all sizes with $k$ negative eigenvalues is highly restricted,
whatever the choice of $k$: every such map in fact preserves not only the
nullity and the total multiplicities of positive and negative
eigenvalues, it preserves the eigenvalues themselves, up to simultaneous
scaling.

Our second result resolves the dimension-free preserver problem for
$\cS^{(k)}(I)$. If $k=0$, Schoenberg's Theorem~\ref{Tschoenberg} shows
that the class of entrywise preservers is far larger than the class of
inertia preservers, which contains only the positive homotheties.
However, if $k>0$ then this is no longer the case.

\begin{theorem}\label{Tpreserver}
Let $I := (-\rho,\rho)$, where $0 < \rho \leq \infty$, and let $k$ be a
positive integer.
Given a function $f : I \to \R$, the following are equivalent.
\begin{enumerate}
\item The entrywise transform $f[-]$ sends $\cS^{(k)}(I)$ to $\cS^{(k)}$.

\item The function $f$ is a positive homothety, so that
$f( x ) \equiv c x$ for some $c > 0$, or, when $k = 1$, we can also have
that $f( x ) \equiv -c$ for some $c > 0$.
\end{enumerate}
\end{theorem}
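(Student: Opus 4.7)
The direction (2) $\Rightarrow$ (1) is immediate. If $f(x) = cx$ with $c > 0$, then $f[A] = cA$ has the same inertia as $A$, so $f[A] \in \bs^{(k)}$. In the exceptional case $k = 1$ with $f(x) \equiv -c$, $c > 0$, the image $f[A] = -c\,\bone \bone^\top$ is rank one with sole nonzero eigenvalue $-cn < 0$, hence $f[A] \in \bs_n^{(1)} \subset \bs^{(1)}$. For the reverse implication, the constant branch is also direct: if $f \equiv c_0$, then $f[A] = c_0\,\bone \bone^\top$ for every square input, which has rank at most one; for such an image to lie in $\bs^{(k)}$ with $k \geq 1$ forces $c_0 < 0$ and $k = 1$, since $c_0 \geq 0$ produces no negative eigenvalues, and rank one admits at most one.

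For (1) $\Rightarrow$ (2) with $f$ non-constant, the strategy is to upgrade the set-preservation hypothesis to full inertia preservation on $\bs^{(k)}(I)$, whereupon Theorem~\ref{Tinertia} supplies $f(x) \equiv cx$ with $c > 0$. I would proceed in four steps. (i) Apply $f$ to $k \times k$ negative-definite matrices $A \in \bs_k^{(k)}(I)$; since $f[A]$ is $k \times k$ with $k$ negative eigenvalues, it is negative definite, and so inertia is automatically preserved on $\bs_k^{(k)}(I)$. (ii) Use block-diagonal matrices $A \oplus 0_m \in \bs_{k+m}^{(k)}(I)$, whose images pick up rank-one contributions from the off-diagonal value $f(0)$; allowing $m$ to grow while the negative count remains fixed at $k$, and invoking non-constancy to avoid the rank-one exceptional case, should force $f(0) = 0$. (iii) Once $f(0) = 0$, the blockwise identity $f[A \oplus 0_m] = f[A] \oplus 0_m$ propagates inertia preservation from $\bs_k^{(k)}(I)$ to larger block-structured matrices. (iv) Handle arbitrary $A \in \bs_n^{(k)}(I)$ with genuine positive inertia by perturbing $A$ within $\bs_n^{(k)}(I)$ and tracking the rank of $f[A]$; any drop of nullity in the image would cost either positive or negative eigenvalues, and the latter is forbidden by hypothesis.

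The principal obstacle will be step (ii), extracting $f(0) = 0$ in the non-constant case: off-diagonal entries equal to $f(0)$ can conspire with diagonal values to maintain the correct number $k$ of negative eigenvalues, so one cannot read off $f(0) = 0$ from a single matrix. The lever is the freedom to vary the size $m$ of the zero block together with the existence of at least two distinct values in the range of $f$; one must show these together force the rank-one perturbation produced by $f(0) \neq 0$ to eventually violate the $k$-negatives constraint. Step (iv) is also subtle and likely requires continuity of $f$ (which itself must be extracted from the hypothesis, perhaps via the negative-definite reductions in step (i)) or a direct rank-stability argument based on the semi-algebraic structure of $\bs^{(k)}$.
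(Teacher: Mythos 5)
The easy directions are handled correctly: (2)$\Rightarrow$(1) is immediate, and your analysis of the constant branch (rank-one image, hence $c_0<0$ and $k=1$) is sound. The substantive gap lies in the non-constant branch of (1)$\Rightarrow$(2), and it is not a matter of filling in details — the mechanism you are missing is the crux of the whole argument.

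You propose to ``upgrade'' the hypothesis that $f[-]$ sends $\bs^{(k)}(I)$ into $\bs^{(k)}$ to full inertia preservation and then quote Theorem~\ref{Tinertia}. But the upgrade is essentially as hard as the theorem itself, and your steps (ii) and (iv) cannot be completed by test matrices alone. In step~(ii), using $A = (-\eps\Id_k)\oplus \bzero_m$ and letting $m\to\infty$ (as you suggest) only yields $f(0)\geq 0$: writing $g(x) := f(x)-f(0)$, the image is $g(-\eps)\Id_k\oplus\bzero_m + f(0)\bone\bone^T$, and if $f(0)<0$ the two eigenvalues in the $\{\bone_k\oplus\bzero_m,\ \bzero_k\oplus\bone_m\}$ plane both go negative for large $m$, giving $k+1$ negatives; but nothing prevents $f(0)>0$. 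The complementary bound $f(0)\leq 0$ comes from $f[-\eps\Id_k]\in\bs_k^{(k)}$ (negative definiteness of a $k\times k$ image), which gives $f(-\eps) + (k-1)f(0) < 0$ for all $\eps$, and to convert this into $f(0)\leq 0$ one needs to let $\eps\to 0^+$ and invoke continuity of $f$ at $0$. You have no way to establish that continuity from the hypothesis: the only tool in the paper that extracts analytic structure from a negativity-preservation hypothesis is the replication trick combined with Schoenberg's Theorem~\ref{Tschoenberg}, packaged in Theorem~\ref{Tstrongest}. Step~(iv) has a similar problem: the hypothesis fixes the negative count but says nothing a priori about the positive count or nullity of $f[A]$, so a ``drop of nullity costs eigenvalues'' argument does not close — it would prove exactly the positive-count preservation you are trying to establish, and the perturbation idea you gesture at has no mechanism to force it.

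The paper's actual route is much shorter and runs in the opposite direction: since $\bs_n^{(k)}\subseteq\overline{\bs_n^{(k)}}$, the hypothesis of Theorem~\ref{Tpreserver}(1) implies the hypothesis of Theorem~\ref{Tstrongest} with $l=k$, whose proof (Steps~1--5, using the replication trick, Lemma~\ref{Lblock}, Lemma~\ref{LWeyl}, and the Vandermonde test matrices~\eqref{Etestmatrix}, \eqref{Etestmatrix2}) yields that $f$ is either constant or linear with $f(0)\geq 0$ and positive slope. Once that analytic form is in hand, your test matrix $-\eps\Id_k$ immediately forces $f(0)=0$ (the eigenvalue $kf(0)-c\eps$ of $f[-\eps\Id_k]$ must be negative for all small $\eps>0$), and the constant branch is dispatched as you describe. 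In short: Theorems~\ref{Tinertia} and~\ref{Tpreserver} are both downstream of Theorem~\ref{Tstrongest}, not of each other, and you should begin by invoking that structure theorem rather than trying to reconstruct it by elementary perturbations.
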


There is a notable rigidity phenomenon here, in stark contrast to the
dimension-free preserver problem for positive semidefinite matrices (the
$k = 0$ case).
When there is at least one negative eigenvalue, the non-constant
transforms leaving invariant the number of negative eigenvalues also
conserve the number of positive eigenvalues and the number of zero
eigenvalues; more strongly, they preserve the eigenvalues themselves, up
to simultaneous scaling.
That is, Schoenberg's theorem collapses to just homotheties if
$k \geq 2$, with the additional appearance of negative constant functions
if $k = 1$ (and the collection of preservers is non-convex in this last
case).

It is interesting to compare these results with a theorem obtained about
three decades ago by FitzGerald, Micchelli, and Pinkus \cite{fitzgerald},
who classified the entrywise preservers of conditionally positive
matrices of all sizes. An $n \times n$ real symmetric matrix $A$ is
\emph{conditionally positive} if the corresponding quadratic form is
positive semidefinite when restricted to the hyperplane
$\bone_n^\perp \subset \R^n$, where $\bone_n := ( 1, \ldots, 1 )^T$.
That is,
\[
\textrm{if } v \in \R^n \textrm{ is such that } v^T \bone_n = 0
\textrm{ then } v^T A v \geq 0.
\]
The authors showed in \cite[Theorem 2.9]{fitzgerald} that an entrywise
preserver $f[-]$ of this class of conditionally positive matrices
corresponds to a function that differs from being absolutely monotone by
a constant:
\begin{equation}\label{Econdpos}
f( x ) = \sum_{n = 0}^\infty c_n x^n \qquad \textrm{for all } x \in \R,
\textrm{ where } c_n \geq 0 \textrm{ for all } n \geq 1.
\end{equation}
From the perspective of the present article, conditionally positive
matrices are those real symmetric matrices with at most one negative
eigenvalue, with the negative eigenspace (if it exists) constrained to
equal~$\R \bone_n$.
If this constraint on the eigenspace is removed, Theorem~\ref{Tpreserver}
shows that the class of preservers shrinks dramatically.
The high level of rigidity for preservers of negative spectral multiplicity is akin to
that occurring for preservers of totally positive and totally
non-negative kernels: see the recent work \cite{BGKP-TN} for further
details on the latter.

Another way to view Schoenberg's Theorem~\cite{Schoenberg42} is as the
description of the entrywise transforms that preserve the class of
correlation matrices of vectors in Hilbert space. 
A completely parallel theory is developed in Section~\ref{Spontryagin},
providing the classification of self transforms of Gram matrices of
vectors belonging to a Pontryagin space (that is, a Hilbert space endowed
with an indefinite sesquilinear form with a finite number of negative
squares \cite{Azizov-Iokhvidov}).

Our next step involves relaxing the conditions appearing in
Theorem~\ref{Tpreserver}, by only imposing an upper bound on the number
of negative eigenvalues. In other words, we seek to classify the
endomorphisms of the closure
\begin{equation}
\overline{\cS_n^{(k)}}(I) := \bigcup_{j = 0}^k \cS_n^{(j)}(I), \qquad %
\textrm{where } n \geq 1.
\end{equation}
Note that the domain remains $I$ and not its closure~$\overline{I}$.
Similarly to before, we let $\overline{\cS_n^{(k)}}$ serve as an
abbreviation for $\overline{\cS_n^{(k)}}( \R )$.

Once again, if $k=0$ then this is just Schoenberg's
Theorem~\ref{Tschoenberg}, which yields a large class of transforms.
In contrast, if $k>0$ then we again obtain a far smaller class.

\begin{theorem}\label{Tatmostk}
Let $I := (-\rho,\rho)$, where $0 < \rho \leq \infty$, and let $k$ be a
positive integer. Given a function $f : I \to \R$, the following are
equivalent.
\begin{enumerate}
\item The entrywise transform $f[-]$ sends $\overline{\cS_n^{(k)}}(I)$ to
$\overline{\cS_n^{(k)}}$ for all $n \geq k$.
\item The function $f$ is either linear and of the form
$f( x ) \equiv f( 0 ) + c x$, where $f(0) \geq 0$ and $c > 0$, or
constant, so that $f(x) \equiv d$ for some $d \in \R$.
\end{enumerate}
\end{theorem}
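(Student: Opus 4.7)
The implication (2) $\Rightarrow$ (1) is routine. For a constant $f \equiv d$ the image $f[A] = d\,\bone\bone^T$ has rank at most one, hence at most one negative eigenvalue. For $f(x) = f(0) + cx$ with $c > 0$ and $f(0) \geq 0$ we have $f[A] = f(0)\,\bone\bone^T + cA$, a positive-semidefinite rank-one perturbation of $cA$, and Weyl's inequality $\lambda_j(f[A]) \geq \lambda_j(cA)$ forces the number of negative eigenvalues of $f[A]$ not to exceed that of $A$.

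The plan for the converse proceeds in two stages. \emph{Stage 1} reduces $f$ to an absolutely monotone series modulo $f(0)$. For any $A \in \bs_n^{(0)}(I)$ and $m \geq 1$, the $mn \times mn$ block-diagonal matrix $A' = A \oplus \cdots \oplus A$ (with $m$ copies of $A$) is positive semidefinite with entries in $I$, hence lies in $\overline{\bs_{mn}^{(k)}}(I)$. The entrywise image $f[A']$ has diagonal blocks $f[A]$ and off-diagonal blocks $f(0)\,\bone_n\bone_n^T$; decomposing $\R^{mn} = (\bone_m \otimes \R^n) \oplus (\bone_m^\perp \otimes \R^n)$ shows that the spectrum of $f[A']$ consists of the eigenvalues of $f[A] + (m-1)f(0)\,\bone_n\bone_n^T$ together with those of $f[A] - f(0)\,\bone_n\bone_n^T$, the latter repeated with multiplicity $m-1$. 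Since the hypothesis bounds the total number of negative eigenvalues of $f[A']$ by $k$ for every $m$, letting $m \to \infty$ forces $f[A] - f(0)\,\bone_n\bone_n^T \succeq 0$, so $g := f - f(0)$ preserves positive semidefiniteness entrywise on $I$. Schoenberg's Theorem~\ref{Tschoenberg} then yields $f(x) = f(0) + \sum_{m \geq 1} c_m x^m$ with $c_m \geq 0$.

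\emph{Stage 2} rules out the nonlinear coefficients and constrains $f(0)$. The main test matrix is the rank-$r$ matrix
\[
A = \sum_{s=1}^{r-k} u_s u_s^T - \sum_{t=1}^{k} v_t v_t^T,
\]
built from generic, small vectors $u_s, v_t \in \R^n$ so that $A$ has signature $(r-k,k)$ with entries in $I$. Expanding each Schur power by the multinomial theorem yields
\[
A^{\circ m} = \sum_{|\alpha|+|\beta|=m} \frac{m!}{\alpha!\,\beta!}\,(-1)^{|\beta|}\,\bigl(u_1^{\circ \alpha_1}\circ\cdots\circ v_k^{\circ \beta_k}\bigr)\bigl(u_1^{\circ \alpha_1}\circ\cdots\circ v_k^{\circ \beta_k}\bigr)^T,
\]
and reassembling the terms gives $f[A] = XDX^T$, where the columns of $X$ are $\bone$ together with these entrywise monomials, and $D$ is diagonal with the corresponding signed multinomial coefficients and with $f(0)$ in the entry for $\bone$. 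For $n$ large and the $u_s, v_t$ sufficiently generic, the columns of $X$ are linearly independent, so by Sylvester's law of inertia the signature of $f[A]$ equals that of $D$. A direct count: each $c_m > 0$ contributes exactly the number of multi-indices $(\alpha, \beta)$ with $|\alpha|+|\beta|=m$ and $|\beta|$ odd---which is $k$ when $m = 1$, already $k(r-k)$ when $m = 2$, and grows polynomially in $r$ for every $m \geq 2$---while $f(0) < 0$ adds one further negative. For $r \geq k + 2$, the hypothesis that $f[A]$ has at most $k$ negative eigenvalues therefore forces $c_m = 0$ for all $m \geq 2$, and then whenever $c_1 > 0$ it also forces $f(0) \geq 0$. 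This yields the dichotomy.

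The main obstacle is the signature computation in Stage 2: one must arrange that the signed rank-one pieces produced by the Schur expansions do not silently cancel one another, and that the infinite tail of the power series does not disturb the count. Both are handled by choosing the $u_s, v_t$ with generic entries in an ambient dimension $n$ large enough for the relevant entrywise monomials to be linearly independent, and then rescaling $u_s, v_t$ small so that the entries of $A$ remain in $I$ and the tail contributions are negligible in operator norm; an eigenvalue-continuity argument then transfers the count from the truncated polynomial approximation to $f[A]$ itself.
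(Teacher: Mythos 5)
Your (2) $\Rightarrow$ (1) argument matches the paper's, and Stage~1 is a valid variant of Step~1 in the paper's proof of Theorem~\ref{Tstrongest} (you replicate $A$ alone and diagonalize; the paper appends $-\eps\Id_k$ and takes $l+2$ copies — both force $g = f - f(0)$ into Schoenberg's hypothesis). The paper then immediately finishes by invoking Theorem~\ref{Tstrongest} with $l = k$: the inclusion $\bs_n^{(k)}(I) \subset \overline{\bs_n^{(k)}}(I)$ reduces (1) $\Rightarrow$ (2) to that theorem. Your Stage~2 is therefore attempting to re-prove the hard part of Theorem~\ref{Tstrongest}, by a genuinely different route: a direct inertia computation of $f[A]$ via a signed multinomial Schur expansion.

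This is where the real gap is. You write $f[A] = X D X^T$ where the columns of $X$ are the monomials $u_1^{\circ \alpha_1} \circ \cdots \circ v_k^{\circ \beta_k}$, but this factorization involves \emph{infinitely many} columns (one per pair $(\balpha,\bbeta)$ with $c_{|\balpha|+|\bbeta|}\ne 0$), so Sylvester's law of inertia does not apply to it directly — which is precisely why you fall back on truncating and ``transferring the count by eigenvalue continuity.'' That transfer does not work as stated: if you shrink the test vectors by a factor $\eps$ to control the tail, then $c_m A^{\circ m} = c_m \eps^m A_0^{\circ m}$, so the negative eigenvalues of the truncated sum themselves tend to zero (at the same or a faster rate than the tail), and there is no uniform separation to justify that the perturbation preserves the negative-eigenvalue count. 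The paper avoids this entirely by trading eigenvalue counts for ranks: Lemma~\ref{Lblock} and the doubling construction $\begin{bmatrix} A & B \\ B & A \end{bmatrix}$ convert the hypothesis into the statement that $g[B]$ has rank at most $l$ whenever $B \in \bs_n^{(0)}(I)$ has rank $k$, and then the Loewner inequality $g[B] \ge c_r B^{\circ r} \ge \bzero$ forces the column space of a high-rank matrix into that of $g[B]$ — a containment that is completely insensitive to how small the eigenvalues are. You would need to supply an analogue of that robustness (or an explicit quantitative separation of the spectrum from the tail norm) for Stage~2 to close. The asserted generic linear independence of the relevant monomial columns also needs an argument (the paper's Vandermonde construction is the template), though that is the smaller of the two issues.
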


Theorems~\ref{Tinertia},~\ref{Tpreserver} and~\ref{Tatmostk} are
negativity-preserving results. All three statements turn out to be
consequences of the following unifying theorem that we prove in
Section~\ref{S2} below.

\begin{utheorem}\label{Tstrongest}
Let $I := (-\rho,\rho)$, where $0 < \rho \leq \infty$, and let $k$ and
$l$ be positive integers. Given a function $f : I \to \R$, the
following are equivalent.
\begin{enumerate}
\item The entrywise transform $f[-]$ sends
$\overline{\cS_n^{(k)}}(I)$ to $\overline{\cS_n^{(l)}}$ for all $n \geq k$.
\item The entrywise transform $f[-]$ sends $\cS_n^{(k)}(I)$ to
$\overline{\cS_n^{(l)}}$ for all $n \geq k$.
\item Exactly one of the following occurs:
\begin{enumerate}
\item the function $f$ is constant, so that $f(x) \equiv d$
for some $d \in \R$;
\item it holds that $l \geq k$ and $f$ is linear, with
$f( x ) \equiv f( 0 ) + c x$, where $c>0$ and also $f(0) \geq 0$
if $l = k$.
\end{enumerate}
\end{enumerate}
If instead $k \geq 1$ and $l = 0$ then the entrywise transform
$f[-]$ sends $\overline{\cS_n^{(k)}}(I)$ (and so $\cS_n^{(k)}(I)$)
to $\overline{\cS_n^{(0)}} = \cS_n^{(0)}$
for all $n \geq k$ if and only if $f(x) \equiv c$ for some $c \geq 0$.

Finally, if $k = 0$ and $l \geq 1$ then the entrywise transform $f[-]$
sends $\cS_n^{(0)}(I)$ to $\overline{\cS_n^{(l)}}$ for all $n \geq 1$
if and only if
\[
f( x ) = \sum_{n = 0}^\infty c_n x^n \qquad \textrm{for all } x \in ( -\rho, \rho ), %
\textrm{ where } c_n \geq 0 \textrm{ for all } n \geq 1.
\]
\end{utheorem}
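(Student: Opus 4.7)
The plan starts with the implication $(2) \Rightarrow (1)$, which is quick in all three regimes. A constant $f \equiv d$ produces the rank-one matrix $f[A] = d \bone_n \bone_n^T$, which has at most one negative eigenvalue, so it sits in $\overline{\bs_n^{(l)}}$ for any $l \geq 1$ (and is positive semidefinite iff $d \geq 0$). For $f(x) = f(0) + c x$ with $c > 0$, one has $f[A] = c A + f(0) \bone_n \bone_n^T$, a rank-one perturbation of $c A$; Weyl's inequalities show the number of negative eigenvalues changes by at most one, and, if $f(0) \geq 0$, adding the positive semidefinite $f(0) \bone_n \bone_n^T$ cannot increase the negative count. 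This accounts for the allowed values of $l$ in every sub-case.

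For $(1) \Rightarrow (2)$ in the regime $k = 0$, $l \geq 1$, I would reduce to Schoenberg's Theorem~\ref{Tschoenberg}. Set $g(x) := f(x) - f(0)$: then $g(0) = 0$ and, by the same Weyl bound, $g[A] = f[A] - f(0) \bone_n \bone_n^T$ has at most $l + 1$ negative eigenvalues whenever $A \in \bs_n^{(0)}(I)$. Because $g(0) = 0$, the identity $g[A^{\oplus m}] = g[A]^{\oplus m}$ holds for every $m \geq 1$; if $g[A]$ had even one negative eigenvalue, taking $m > l + 1$ would violate the bound. Hence $g[A] \succeq 0$ for every positive semidefinite $A$, so Schoenberg's theorem forces $g$ to be absolutely monotone. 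For the auxiliary regime $k \geq 1$, $l = 0$, I would test with the block-diagonal matrices $A = -\delta I_k \oplus \eta I_{n - k} \in \bs_n^{(k)}(I)$ for $\delta, \eta \in (0, \rho)$: the positive semidefiniteness of $f[A] = \diag(f(-\delta), \ldots, f(\eta)) + f(0)(\bone_n \bone_n^T - I_n)$, analysed through its secular equation as $n$ grows, forces $f(-\delta) = f(\eta) = f(0) \geq 0$, yielding the constant conclusion.

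The heart of the argument is the principal regime $k, l \geq 1$ with $l \leq 2 k - 2$ (or $k = l = 1$). I would first establish continuity of $f$ by perturbing a single off-diagonal entry of a fixed matrix in $\bs_n^{(k)}(I)$: the image must stay in $\overline{\bs_n^{(l)}}$, and continuity of eigenvalues in matrix entries rules out any jumps in $f$. Next, when $l < k$ (which forces $k \geq 2$), I would show $f$ is constant by testing against matrices of the form $A = M I_n - \alpha P_V$ with a $k$-dimensional subspace $V \subset \bone_n^\perp$ and $0 < M < \alpha$, so that $A \in \bs_n^{(k)}(I)$ for small $M, \alpha$ and the whole negative eigenspace lies in $\bone_n^\perp$. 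Since $\bone_n \bone_n^T$ acts trivially on $V$, any non-constant continuous $f$ transmits the negativity of $A|_V$ to $f[A]|_V$, producing at least $k$ negative eigenvalues and contradicting $l < k$. In the remaining case $l \geq k$, the goal is to upgrade continuity to linearity. The plan is to feed $f$ a test matrix of the form $A = u_1 u_1^T + u_2 u_2^T - \sum_{j = 1}^{k} v_j v_j^T \in \bs_n^{(k)}(I)$ with $u_1, u_2, v_1, \ldots, v_k$ in general position, expand $f$ about the origin as linear plus higher-order remainder, and expose any nonzero quadratic (or higher) contribution through the Hadamard cross-products $(u_i \circ v_j)(u_i \circ v_j)^T$ for $i \in \{1, 2\}$, $j \in \{1, \ldots, k\}$. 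Generically these outer products span a $2 k$-dimensional negative contribution to $f[A]$, which exceeds the allowed threshold $l \leq 2 k - 2$ unless the nonlinear part of $f$ vanishes. Once linearity is secured, one-parameter tests pin the sign of $c$ and, when $l = k$, of $f(0)$.

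I expect the \emph{main obstacle} to be this quadratic-defect count: one must choose $u_i$ and $v_j$ so that the outer products $(u_i \circ v_j)(u_i \circ v_j)^T$ are genuinely linearly independent and contribute at least $2 k - 1$ negative eigenvalues to $f[A]$, rather than partially cancelling against the positive Hadamard contributions $(u_i \circ u_{i'})(u_i \circ u_{i'})^T$ and $(v_j \circ v_{j'})(v_j \circ v_{j'})^T$. The threshold $2 k - 2$ is tight, so the construction must saturate the inequality; beyond it (for $l \geq 2 k - 1$) nonlinear preservers do exist, and the argument must fail there. A scaling step $A \mapsto \epsilon A$ with $\epsilon \to 0$ then isolates the lowest-order nonlinear Taylor coefficient from the higher ones, forcing them all to vanish in turn and delivering the affine conclusion.
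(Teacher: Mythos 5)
Your $(2) \Rightarrow (1)$ sketch and your $k = 0$, $l \geq 1$ argument are correct; the latter is essentially the paper's first step, namely the replication identity $g[A^{\oplus m}] = g[A]^{\oplus m}$ for $g := f - f(0)$ combined with Schoenberg's Theorem~\ref{Tschoenberg}. However, you deploy this trick \emph{only} when $k = 0$, and that is the decisive gap. The paper's indispensable first move is to show $g$ is absolutely monotone for \emph{every} $k \geq 1$: one embeds $D := (-\eps\,\Id_k) \oplus A^{\oplus(l+2)} \in \bs_N^{(k)}(I)$ for PSD $A$, so the hypothesis bounds the negative eigenvalues of $g[D]$ by $l + 1$; since $g[A]^{\oplus(l+2)}$ sits inside, $g[A]$ must be positive semidefinite and Schoenberg gives absolute monotonicity, hence a power series $g(x) = \sum_{j \geq 1} c_j x^j$ with $c_j \geq 0$. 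Only then are Taylor coefficients available to manipulate. Your substitute — deducing continuity of $f$ from continuity of eigenvalues — does not work: a jump in an entry of $f[A]$ jumps the spectrum, but the closed cone $\overline{\bs_n^{(l)}}$ tolerates many such jumps, so nothing is excluded; and even if continuity were secured, it is far short of the analyticity needed to "expand $f$ about the origin as linear plus higher-order remainder."

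Two further gaps concern the test-matrix stage. First, the paper's second key device is Lemma~\ref{Lblock}: for PSD $A, B$ with $A - B \in \bs_n^{(k)}$, conjugating $\begin{bmatrix} A & B \\ B & A \end{bmatrix}$ via (\ref{Eblock}) shows $h[A] - h[B] \in \overline{\bs_n^{(l)}}$; taking $A = \bzero$ and $B$ PSD of rank $k$ yields that $g[B]$ has rank at most $l$. This converts the negativity hypothesis into a \emph{rank bound on the image of a PSD matrix}, which combines cleanly with Schur products: the paper then chooses $B$ as in (\ref{Etestmatrix}), a sum of outer products of Hadamard powers of one vector, and observes that if any coefficient $c_r$ with $r \geq 2$ survives then $g[B] \succeq c_r B^{\circ r}$, which already has rank $2k - 1 > l$, a contradiction. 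Your version feeds $f$ an indefinite matrix $A = u_1 u_1^T + u_2 u_2^T - \sum_j v_j v_j^T$ and attempts to count negative eigenvalues of $f[A]$ directly from Hadamard cross-products; this is substantially harder, because the positive and negative Schur-product contributions to $f[A]$ are not spectrally separated and you have no control over cancellation. Second, your $l < k$ argument asserts that "any non-constant continuous $f$ transmits the negativity of $A|_V$ to $f[A]|_V$" — but entrywise transforms possess no equivariance with respect to invariant subspaces of $A$, so this transmission simply does not occur. The paper needs no such claim: once $f(x) \equiv f(0) + cx$ with $c > 0$ is established, the explicit $(k+1) \times (k+1)$ matrix in (\ref{Ectrex}) is sent to a matrix in $\bs_{k+1}^{(k)} \cup \bs_{k+1}^{(k+1)}$, directly ruling out $l < k$ (and, by shrinking $\delta$, also $f(0) < 0$ when $l = k$).
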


Note that setting $k = l = 0$ in Theorem~\ref{Tstrongest}(1)
(the missing case) gives exactly hypothesis~(1) of
Schoenberg's Theorem~\ref{Tschoenberg}. 

The class of functions identified in Theorem~\ref{Tstrongest} when $k = 0$
and $l \geq 1$  is independent of~$l$ and coincides with the
dimension-free entrywise preservers for two related but distinct
constraints:
(a) conditional positivity, as noted above (\ref{Econdpos}), and
(b) Loewner monotonicity, so that $f[ A ] - f[ B ] \in \cS^{(0)}$
whenever $A - B \in \cS^{(0)}$. 
The latter claim is a straightforward consequence of Schoenberg's
Theorem~\ref{Tschoenberg}; see \cite[Theorem~19.2]{Khare}.

\begin{remark}\label{Rnew}
A striking consequence of Theorem~\ref{Tstrongest} is as follows.
For any polynomial function $f$, if the negative spectral multiplicity of $f[ C ]$
is uniformly bounded above for any sufficiently large matrix $C$ then
$f$ cannot have any quadratic or higher-order terms.
The proof of this result is given in the next section, and uses
Sidon sets (also known as~$B_d$ sets) from additive combinatorics and number theory,
whose study was pioneered by Erd\"os and Tur\'an and developed by Chowla, among others.
\end{remark}

\subsection{Multi-variable transforms and non-balanced domains}

Given the results described above, it is natural to explore extensions in
two directions, aligned to previous work. In the sequel, for any integers
$m$ and $n$ with $m \leq n$, we let $[m : n ]$ denote the set
$\{ m, m + 1, \ldots, n \} = [ m, n ] \cap \Z$.
\begin{itemize}
\item \textit{Functions acting on matrices with entries in $I = ( 0, \rho )$
(positive entries) or in~$I = [ 0, \rho )$ (non-negative entries).}
For preservers of positive semidefiniteness, this problem was considered
by Loewner and Horn~\cite{horn} and Vasudeva~\cite{vasudeva79}
for the case $\rho = \infty$, and then in recent work \cite{BGKP-hankel}
for finite $\rho$. In each case, the class obtained consists of functions
represented by convergent power series with non-negative Maclaurin
coefficients.

\item \textit{Functions acting on $m$-tuples of matrices.}
A function $f: I^m \to \R$ acts entrywise on $m$-tuples of matrices with
entries in $I$: if $B^{(p)} = ( b^{(p)}_{i j} )$ is an $n \times n$
matrix for $p = 1$, $\ldots,$ $m$ then the  $n \times n$ matrix 
$f[ B^{(1)}, \ldots, B^{(m)} ]$ has $( i, j )$ entry
\[
f[ B^{(1)}, \ldots, B^{(m)} ]_{i j} = f( b^{(1)}_{i j}, \ldots, b^{(m)}_{i j} ) %
\qquad \textrm{for all } i, j \in [ 1 : n ].
\]
In this case, the classification of preservers in the
positive-semidefinite setting was achieved by FitzGerald, Micchelli and
Pinkus \cite{fitzgerald} when~$I = \R$, and then in our recent work
\cite{BGKP-hankel} over smaller domains.
\end{itemize}

Given a multi-index $\balpha = ( \alpha_1, \ldots, \alpha_m ) \in \Z_+^m$,
where $\Z_+ = \{ 0, 1, 2, \ldots \}$ is the set of non-negative integers,
and a point $\bx = ( x_1, \ldots, x_m ) \in \R^m$, we use the standard
notation $\bx^\balpha := x_1^{\alpha_1} \cdots x_m^{\alpha_m}$.

\begin{theorem}[\cite{BGKP-hankel}]\label{T000}
Let $I = (-\rho, \rho)$, $(0,\rho)$ or $[0,\rho)$, where
$0 < \rho \leq \infty$, and let $m$ be a positive integer. The function
$f : I^m \to \R$ acts entrywise to send $m$-tuples of positive
semidefinite matrices with entries in $I$ of arbitrary size to the set of
positive semidefinite matrices if and only if $f$ is represented on~$I^m$
by a convergent power series with non-negative coefficients:
\begin{equation}
f( \bx ) = \sum_{\balpha \in \Z_+^m} c_\balpha \bx^\balpha \qquad %
\textrm{for all } \bx \in I^m, \textrm{ where } %
c_\balpha \geq 0 \textrm{ for all } \balpha.
\end{equation}
\end{theorem}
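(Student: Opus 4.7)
Sufficiency is immediate: if $f(\bx) = \sum_\balpha c_\balpha \bx^\balpha$ with $c_\balpha \geq 0$, expand
\[
f[B^{(1)}, \ldots, B^{(m)}] = \sum_{\balpha \in \Z_+^m} c_\balpha \, (B^{(1)})^{\circ \alpha_1} \circ \cdots \circ (B^{(m)})^{\circ \alpha_m};
\]
each summand is a non-negative scalar times an iterated Hadamard product of positive semidefinite matrices, hence positive semidefinite by the Schur product theorem, and the sum converges entrywise to a positive semidefinite matrix.

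For the converse, the plan is an induction on $m$, with base case $m = 1$ supplied by Schoenberg's Theorem~\ref{Tschoenberg} for $I = (-\rho, \rho)$ and by its analogues (Vasudeva and Loewner--Horn for the classical $\rho = \infty$ cases, and \cite{BGKP-hankel} for finite $\rho$) for $I = (0, \rho)$ and $[0, \rho)$. For the inductive step we freeze the last variable: given $x_m \in I$ with $x_m \geq 0$, set $f_{x_m}(\bx') := f(\bx', x_m)$ on $I^{m-1}$, and observe that applying the hypothesis to the $m$-tuple $(B^{(1)}, \ldots, B^{(m-1)}, x_m \bone_n \bone_n^T)$ — whose last entry is positive semidefinite precisely because $x_m \geq 0$ — shows that $f_{x_m}$ preserves positive semidefinite $(m-1)$-tuples. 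The inductive hypothesis then yields
\[
f(\bx', x_m) = \sum_{\balpha' \in \Z_+^{m-1}} c_{\balpha'}(x_m) \, (\bx')^{\balpha'}
\]
on $I^{m-1}$, with $c_{\balpha'}(x_m) \geq 0$ for every $\balpha'$.

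We next extract each coefficient $c_{\balpha'}$ as a one-variable function of $x_m$, show that it is itself a one-variable positive semidefinite preserver on $I \cap [0, \infty)$, and invoke Schoenberg once more to write $c_{\balpha'}(x_m) = \sum_{\alpha_m \geq 0} c_{\balpha', \alpha_m} x_m^{\alpha_m}$ with non-negative coefficients. Regrouping by the total multi-index $\balpha = (\balpha', \alpha_m)$ then produces the desired expansion on the positive orthant $I^{m-1} \times (I \cap [0, \infty))$.

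The main obstacle is the coefficient-extraction step: isolating $c_{\balpha'}(x_m)$ from the multivariate series for $f_{x_m}$ and verifying that it inherits positive-semidefinite preservation in~$x_m$. The natural tool is a Vandermonde or finite-difference device — choosing rank-one test matrices $B^{(p)} = \bv_p \bv_p^T$ with $\bv_p$ having distinct positive coordinates, so that suitable linear combinations of the outputs $f[B^{(1)}, \ldots, B^{(m-1)}, M]$ isolate a single monomial $(\bx')^{\balpha'}$. A secondary obstacle appears when $I = (-\rho, \rho)$: the freezing trick only produces the expansion on $[0, \rho)^m$, and one must still show $f$ agrees there with the analytic continuation of this series to the full box. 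Since a power series with non-negative coefficients convergent on $[0, \rho)^m$ automatically converges absolutely on $(-\rho, \rho)^m$ by Cauchy--Hadamard, this last step should follow by testing $f$ against positive semidefinite matrices with genuinely negative entries (for instance, $2 \times 2$ matrices of the form $\begin{pmatrix} \rho/2 & -a \\ -a & \rho/2 \end{pmatrix}$) and invoking continuity to identify the two functions.
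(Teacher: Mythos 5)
Your sufficiency direction is correct, and the reduction of $m$ variables to the case of $x_m$ frozen is a sensible opening move. But the coefficient-extraction step — which you yourself flag as ``the main obstacle'' — conceals a genuine gap, and the tool you propose (Vandermonde inversion) is the wrong one. If you write $f(\bx',x_m)=\sum_{\balpha'}c_{\balpha'}(x_m)(\bx')^{\balpha'}$ and try to isolate a single $c_{\balpha'}[M]$ by inverting the linear system obtained from rank-one tests $B^{(p)}=\bv_p\bv_p^T$, the inverse Vandermonde matrix has entries of both signs, so the resulting combination of positive semidefinite outputs is a \emph{signed} combination and there is no reason it should again be positive semidefinite. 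What one actually needs is Loewner monotonicity of $f[-]$ in each slot — that $A_p'\geq A_p$ in the Loewner order forces $f[\ldots,A_p',\ldots]\geq f[\ldots,A_p,\ldots]$ — which is proved by a block-matrix device (exactly Lemma~\ref{Lblock} in this paper, with $C=\left[\begin{smallmatrix}A'&A\\A&A'\end{smallmatrix}\right]$). Loewner monotonicity makes the one-sided difference quotients $\bigl(f[\bA+h\bone\be_j^T,\bB]-f[\bA,\bB]\bigr)/h$ positive semidefinite for $h>0$; iterating and passing to the limit shows that the mixed partial derivatives $\partial^{\balpha'}_{\bx'}f$ are again entrywise PSD preservers, and then $c_{\balpha'}=\frac{1}{\balpha'!}\partial^{\balpha'}_{\bx'}f(\bzero^+,\cdot)$ inherits that property. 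None of this is in your sketch, and without it the induction does not close.

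You should also be aware that the paper's own route (via \cite{BGKP-hankel}) is structurally different and arguably cleaner. There is no induction on $m$: instead one shows directly that a preserver is smooth on $(0,\rho)^m$ with all partial derivatives non-negative (again via Loewner monotonicity), i.e.\ absolutely monotone, and then applies the multivariate Bernstein theorem — Theorem~\ref{Tabsmon} in the Appendix — to obtain the power-series representation on $(0,\rho)^m$ in one stroke. The cases $[0,\rho)^m$ and $(-\rho,\rho)^m$ are then deduced from the $(0,\rho)^m$ case. Your final extension step (from $[0,\rho)^m$ to $(-\rho,\rho)^m$) is also treated too lightly: testing $f$ on $2\times 2$ PSD matrices with a negative off-diagonal entry gives the bound $|f(\ldots,-a,\ldots)|\leq f(\ldots,\rho/2,\ldots)$ and hence boundedness, but ``invoking continuity'' presumes $f$ is continuous on the full box, which is itself a nontrivial part of the one-variable Rudin refinement and must be proved, not assumed.
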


Below, we extend this result to the complete classification of
negativity-preserving transforms acting on tuples of matrices, in the
spirit of the one-variable results above, over the three types of domain:
$I = (-\rho,\rho)$, $(0,\rho)$ or $[0,\rho)$, where
$0 < \rho \leq \infty$. The key to this is a multi-variable strengthening
of Theorem~\ref{Tstrongest} which also applies to these three different
types of domain.
The proof of this result, Theorem~\ref{Tmain}, appears in
Sections~\ref{Smulti} and~\ref{S1sided}, together with the necessary
supporting results and subsequent corollaries: Section~\ref{Smulti} is
concerned with the extension of Theorem~\ref{Tstrongest} to several
variables and Section~\ref{S1sided} then allows the restriction of $I$
from $( -\rho, \rho )$ to $( 0, \rho)$ or $[ 0, \rho )$.

\begin{notation}
In Theorem~\ref{Tstrongest}, the parameters $k$ and $l$ control the
degree of negativity in the domain and the co-domain, respectively. In
the multi-variable setting, the domain parameter~$k$ becomes an $m$-tuple
of non-negative integers $\bk = ( k_1, \ldots, k_m )$.
Given such a $\bk$, we may permute the entries so that any zero entries
appear first: more formally, there exists
$m_0 \in [ 0 : m ]$ with $k_p = 0$ for $p \in [ 1 : m_0 ]$
and $k_p \geq 1$ for $p \in [ m_0 + 1 : m ]$. We say that $\bk$ is
{\it admissible} in this case and
let $k_{\max} := \max\{ 1, k_p : p \in [ 1 : m ] \}$,
\[
\cS_n^{(\bk)}( I ) := \cS_n^{(k_1)}( I ) \times \cdots \times \cS_n^{(k_m)}( I ), %
\quad \text{and} \quad \overline{\cS_n^{(\bk)}}( I ) := %
\overline{\cS_n^{(k_1)}}( I ) \times \cdots \times \overline{\cS_n^{(k_m)}}( I ).
\]
\end{notation}

We now provide the result that unifies both Schoenberg's theorem and
Theorem~\ref{Tstrongest}. Its necessarily technical statement shows a
separation of variables, combining additively two kinds of preservers:
the rich class of multi-variable power series on one side and the rigid
family of homotheties on the other.

\begin{utheorem}[A Schoenberg-type theorem with negativity constraints]\label{Tmain}
Let $I := ( -\rho, \rho )$, $( 0, \rho )$, or $[ 0, \rho )$,
where $0 < \rho \leq \infty$, let $l$ and $m$ be non-negative integers,
with $m \geq 1$, and let $\bk \in \Z_+^m$ be an admissible tuple.
Given any function $f : I^m \to \R$, the following are equivalent.
\begin{enumerate}
\item The entrywise transform $f[ - ]$ sends $\overline{\cS_n^{(\bk)}}(I)$
to $\overline{\cS_n^{(l)}}$ for all $n \geq k_{\max}$.

\item The entrywise transform $f[ - ]$ sends $\cS_n^{(\bk)}(I)$ to $\overline{\cS_n^{(l)}}$
for all $n \geq k_{\max}$.

\item There exists a function $F : (-\rho,\rho)^{m_0} \to \R$
and a non-negative constant $c_p$ for each $p \in [ m_0 + 1 : m ]$
such that
\begin{enumerate}
\item we have the representation
\begin{equation}\label{Epresrep2}
f( \bx ) = F( x_1, \ldots, x_{m_0} ) + \sum_{p = m_0 + 1}^m c_p x_p
\qquad \textrm{for all } \bx \in I^m,
\end{equation}
\item the function
$\bx' := ( x_1, \ldots, x_{m_0} ) \mapsto F( \bx' ) - F( \bzero_{m_0} )$
is absolutely monotone, that is, it is represented on $I^{m_0}$
by a convergent power series with all Maclaurin coefficients
non-negative, and
\item we have the inequality
\[
\bone_{F( \bzero ) < 0} + \sum_{p : c_p > 0} k_p \leq l.
\]
\end{enumerate}
\end{enumerate}
\end{utheorem}

\noindent [Here and below, the quantity $\bone_P$ has the value $0$
if the proposition $P$ is false and $1$ if it is true.]

Theorem~\ref{Tmain} unifies all of the potentially disparate cases for
$\bk$ and $l$, such as those considered in the one-variable
Theorem~\ref{Tstrongest}, into one set of assertions; the multivariable
formulation seems to add clarity to the situation.
For instance, if $\bk = \bzero$ and $l > 0$ then (3)(c) is vacuously true,
whereas if $l = 0$ then condition (3)(c) requires that
$f( \bzero_m ) = F( \bzero_{m_0} ) \geq 0$.
Moreover, when $\bk = \bzero$ and $l = 0$
we recover Theorem~\ref{T000} and, in particular,
Schoenberg's theorem. Similarly, if $m = 1$ and $I = ( -\rho, \rho )$
then we recover Theorem~\ref{Tstrongest}.

Given previous results, the fact that the transforms classified by
Theorem~\ref{Tmain} are real analytic is to be expected, but
their exact structure is surprising.

Theorem~\ref{Tmain} is the building block we use to obtain the
classification of negativity-preserving transforms in several variables.
The classes of transforms do not depend on the choice of one-sided or
two-sided domains, akin to the one-variable setting. 

We complete the paper by proving complex-valued counterparts to the
preceding statements, now applying to Hermitian matrices. In this case,
the complex analogue of Schoenberg's theorem in one variable was proved
by Herz \cite{Herz63}; the multivariable result was obtained by
FitzGerald, Micchelli and Pinkus:

\begin{theorem}[\cite{fitzgerald}]\label{Tcomplex}
Let $m$ be a positive integer. The function $f : \C^m \to \C$ acts
entrywise to send $m$-tuples of positive semidefinite complex Hermitian
matrices to the set of positive semidefinite matrices if and only if $f$
is represented on $\C^m$ by a convergent power series in
$\bz = (z_1, \dots, z_m)$ and $\overline{\bz} = (\overline{z_1}, \dots, \overline{z_m})$ with non-negative coefficients:
\begin{equation}\label{Efmp}
f( \bz ) = \sum_{\balpha, \bbeta \in \Z_+^m} c_{\balpha, \bbeta}
\bz^\balpha \overline{\bz}^\bbeta \qquad %
\textrm{for all } \bz \in \C^m, \textrm{ where } %
c_{\balpha, \bbeta} \geq 0 \textrm{ for all } \balpha, \bbeta.
\end{equation}
\end{theorem}

With Theorem~\ref{Tcomplex} at hand, we provide the complex analogue of
our main result.

\begin{utheorem}\label{Tmain-complex}
Let $l$ and $m$ be non-negative integers, with $m \geq 1$, and let
$\bk \in \Z_+^m$ be an admissible tuple. Given any function
$f : \C^m \to \C$, the following are equivalent.
\begin{enumerate}
\item The entrywise transform $f[ - ]$ sends $\overline{\cS_n^{(\bk)}}(\C)$
to $\overline{\cS_n^{(l)}}(\C)$ for all $n \geq k_{\max}$.

\item The entrywise transform $f[ - ]$ sends $\cS_n^{(\bk)}(\C)$ to
$\overline{\cS_n^{(l)}}(\C)$ for all $n \geq k_{\max}$.

\item There exists  a function $F  :  \C^{m_0} \to \C$ and
non-negative constants $c_p$ and $d_p$ for each $p \in [ m_0 + 1 : m ]$
such that
\begin{enumerate}
\item we have the representation
\begin{equation}\label{Epresrep3}
f( \bz ) = F( z_1, \ldots, z_{m_0} ) +
\sum_{p = m_0 + 1}^m \bigl( c_p z_p + d_p \overline{z_p} \bigr)
\qquad \textrm{for all } \bz \in \C^m,
\end{equation}
\item the function $\bz' := (z_1, \dots, z_{m_0}) \mapsto F( \bz' ) - F( \bzero_{m_0} )$ is
represented on $\C^{m_0}$ by a convergent power series in $\bz'$ and $\overline{\bz'}$
with non-negative coefficients, as in~\tup{(\ref{Efmp})}, and
\item we have the inequality
\[
\bone_{F( \bzero ) < 0} + %
\sum_{p : c_p > 0} k_p + \sum_{p : d_p > 0} k_p \leq l
\]
and the constant $f( \bzero_m ) = F( \bzero_{m_0} )$ is real.
\end{enumerate}
\end{enumerate}
\end{utheorem}

Although it may seem elementary, a central technique for narrowing
down the class of negativity-preserving transforms to the hyper-rigid
forms listed above is to test them on non-orthogonal ``weighted sums of
squares", that is, linear combinations of carefully chosen rank-one
matrices. Starting with Horn's landmark dissertation \cite{horn}, this is
a key ingredient in almost all structured-matrix-preserver studies
\cite{Khare}. A second key technique in our proofs is the inflation and
deflation of symmetric matrices along isogenic blocks
\cite{BGKP-strata}.

\subsection{Organization of the paper}

In Section~\ref{S2}, we prove Theorem~\ref{Tstrongest} and then deduce
from it Theorems~\ref{Tinertia}, \ref{Tpreserver} and \ref{Tatmostk}.
Working in the one-variable setting and with the two-sided domain
$I = (-\rho,\rho)$ allows us to introduce several key ideas and techniques in
less complex circumstances, and these will then be employed in the
multi-variable setting and with one-sided domains.

We next explore the territory of Pontryagin space and classify in
Section~\ref{Spontryagin} the entrywise transforms of indefinite Gram
matrices in this environment.

In Sections~\ref{Smulti}, \ref{S1sided}, and \ref{Scomplex} we
prove the several-variables results mentioned above, first over the
two-sided product domain, then on the one-sided versions,
and finally, over~$\mathbb{C}$. In each section, this is followed by the
classification of negativity preservers, extending the results obtained
in the single-variable case.
We conclude with an appendix that proves a multi-variable Bernstein
theorem, asserting that absolutely monotone functions on $( 0, \rho )^m$
necessarily have power-series representations with non-negative Maclaurin
coefficients.

\subsection{Notation}

We let $\R$ denote the set of real numbers and
$\Z_+ = \{ 0, 1, 2, \ldots \}$ the set of non-negative integers.
Given $a$, $b \in \Z_+$ with $a < b$, we let
$[ a : b ] := [ a, b ] \cap \Z_+$. If~$\rho = \infty$ then $\rho / a$
and $\rho - a$ also equal $\infty$, for any finite $a > 0$.
We also set $0^0 := 1$.

\section{Inertia preservers for matrices with real entries}\label{S2}

We now begin to address the results appearing in the introduction.
In this section, we will consider functions with domain
$I = ( -\rho, \rho )$.
The proofs below involve several key ideas:
\begin{itemize}
\item[(a)] the translation $g$ of $f$, where $g( x ) := f( x ) - f( 0 )$;
\item[(b)] a ``replication trick'' that we will demonstrate shortly
(see (\ref{Ereptrick}) and thereafter);
\item[(c)] a result from previous work \cite{GKR-lowrank},
that functions sending positive matrices of rank at most $k$ to ones
of rank at most $l$ are necessarily polynomials;
\item[(d)] the use of Sidon sets (also known as $B_d$ sets) from number theory and
additive combinatorics;
\item[(e)] the following lemma.
\end{itemize}

\begin{lemma}\label{Lblock}
Let $h: I \to \R$ be absolutely monotone, where $I := ( -\rho, \rho )$ and
$0 < \rho \leq \infty$, and let
$C := \begin{bmatrix} A & B \\ B & A \end{bmatrix}$,
where $A$, $B \in \cS_n^{(0)}(I)$ are positive semidefinite matrices.
\begin{enumerate}
\item If $A - B \in \cS_n^{(k)}$ for some non-negative integer $k$ then
$C \in \cS_{2 n}^{(k)}(I)$.
\item If $h[ C ] \in \overline{\cS_{2n}^{(l)}}$ for some
non-negative integer $l$ then $h[A] - h[B] \in \overline{\cS_n^{(l)}}$.
\end{enumerate}
\end{lemma}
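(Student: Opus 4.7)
The plan is to exhibit an explicit orthogonal congruence that simultaneously block-diagonalizes $C$ and $h[C]$, reducing both parts of the lemma to an elementary inertia count on the resulting diagonal blocks. Specifically, I would work with the orthogonal matrix
\[
U := \frac{1}{\sqrt{2}} \begin{bmatrix} \Id_n & \Id_n \\ \Id_n & -\Id_n \end{bmatrix},
\]
and verify by a direct multiplication that
\[
U^T C U = \begin{bmatrix} A + B & 0 \\ 0 & A - B \end{bmatrix}.
\]
Since $U$ is orthogonal, $C$ has the same inertia as this block-diagonal matrix.

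For part (1), I would then note that $A, B \in \bs_n^{(0)}(I)$, so $A + B \in \bs_n^{(0)}$ contributes no negative eigenvalues, while the hypothesis $A - B \in \bs_n^{(k)}$ contributes exactly $k$; hence $C$ has exactly $k$ negative eigenvalues. The entries of $C$ are drawn from those of $A$ and $B$ and so lie in $I$, giving $C \in \bs_{2n}^{(k)}(I)$.

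For part (2), the key observation is that the entrywise map $h[-]$ respects the block pattern of $C$, so
\[
h[C] = \begin{bmatrix} h[A] & h[B] \\ h[B] & h[A] \end{bmatrix},
\]
and the same orthogonal congruence then yields
\[
U^T h[C] U = \begin{bmatrix} h[A] + h[B] & 0 \\ 0 & h[A] - h[B] \end{bmatrix}.
\]
Because $h$ is absolutely monotone on $I$, Theorem~\ref{Tschoenberg} tells us $h[-]$ preserves positive semidefiniteness, so $h[A]$, $h[B]$, and hence $h[A] + h[B]$ all belong to $\bs_n^{(0)}$. The number of negative eigenvalues of $h[C]$ therefore equals that of $h[A] - h[B]$; so if $h[C] \in \overline{\bs_{2n}^{(l)}}$ then $h[A] - h[B] \in \overline{\bs_n^{(l)}}$.

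The argument is essentially frictionless once the congruence $U$ is written down, so there is no substantial obstacle: both parts reduce to an eigenvalue count. The only modest care required is in recording that $h[-]$ sends a block matrix with repeated blocks to a matrix of the same block form, and in invoking Theorem~\ref{Tschoenberg} to deduce $h[A], h[B] \in \bs_n^{(0)}$ from absolute monotonicity of $h$.
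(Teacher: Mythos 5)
Your proof is correct and matches the paper's argument essentially exactly: the paper also block-diagonalizes $C$ (and $h[C]$) by an orthogonal congruence, using $J := \frac{1}{\sqrt{2}}\begin{bmatrix} \Id_n & -\Id_n \\ \Id_n & \Id_n \end{bmatrix}$ in place of your $U$ (these differ only by a harmless permutation/sign of columns), and then reads off the inertia of each block in the same way, invoking Schoenberg's theorem to show $h[A]+h[B]$ is positive semidefinite.
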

\begin{proof}
Note first that
\begin{equation}\label{Eblock}
J^T \begin{bmatrix} A & B \\ B & A \end{bmatrix} J = %
\begin{bmatrix} A + B & 0 \\ 0 & A - B \end{bmatrix}, \quad \textrm{where } %
J := \frac{1}{\sqrt{2}}\begin{bmatrix} \Id_n & -\Id_n \\ \Id_n & \Id_n \end{bmatrix}
\end{equation}
is orthogonal and $\Id_n$ is the $n \times n$ identity matrix.
As $A + B$ is positive semidefinite, it follows from (\ref{Eblock}) that
if $A - B \in \cS_n^{(k)}$ then $C \in \cS_{2n}^{(k)}(I)$.

Next, if $h[ C ] \in \overline{\cS_{2n}^{(l)}}$ then another
application of~(\ref{Eblock}) gives that
\[
\begin{bmatrix} h[A] + h[B] & 0 \\ 0 & h[A] - h[B] \end{bmatrix} \in
\overline{\cS_{2n}^{(l)}}.
\]
Schoenberg's Theorem~\ref{Tschoenberg} gives that $h[A] + h[B]$ is
positive semidefinite and therefore
$h[A] - h[B] \in \overline{\cS_n^{(l)}}$, as claimed.
\end{proof}

In addition to the key ideas, we will use a further two lemmas. The
first is the following basic consequence of Weyl's interlacing theorem.

\begin{lemma}\label{LWeyl}
Suppose $A \in \cS_n^{(k)}$ for some positive integer $n$ and
non-negative integer~$k$. If $B \in \cS_n^{(0)}$ has rank~1 then
$A + B \in \cS_n^{(k-1)} \cup \cS_n^{(k)}$
and $A - B \in \cS_n^{(k)} \cup \cS_n^{(k + 1)}$
$($where $\cS_n^{(-1)} := \emptyset$ and $\cS_n^{(n + 1)} := \emptyset)$.
\end{lemma}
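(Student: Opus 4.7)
The plan is to obtain the result directly from the rank-one case of Weyl's interlacing theorem (sometimes called Cauchy's interlacing for rank-one updates). Since $B \in \bs_n^{(0)}$ has rank $1$, we may write $B = u u^T$ for some $u \in \R^n$, so the single nonzero eigenvalue of $B$ is positive. The standard interlacing inequalities for adding a rank-one positive semidefinite matrix $B$ to $A$ are
\[
\lambda_i( A + B ) \geq \lambda_i( A ) \geq \lambda_{i+1}( A + B ) \quad (1 \leq i \leq n - 1), \qquad \lambda_n( A + B ) \geq \lambda_n( A ),
\]
where eigenvalues are listed in non-increasing order. Substituting $A \mapsto A - B$ gives the mirror inequalities
\[
\lambda_i( A ) \geq \lambda_i( A - B ) \geq \lambda_{i+1}( A ) \quad (1 \leq i \leq n - 1), \qquad \lambda_n( A ) \geq \lambda_n( A - B ).
\]

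Now I would fix notation: since $A \in \bs_n^{(k)}$, exactly $k$ of the eigenvalues $\lambda_1( A ) \geq \cdots \geq \lambda_n( A )$ are negative, so $\lambda_{n - k}( A ) \geq 0$ (vacuous if $k = n$) and $\lambda_{n - k + 1}( A ) < 0$ (vacuous if $k = 0$). For $A + B$, the first set of interlacing inequalities gives $\lambda_{n - k + j}( A + B ) \leq \lambda_{n - k + j - 1}( A ) < 0$ for $j = 2, \ldots, k$, so $A + B$ has at least $k - 1$ negative eigenvalues. On the other hand $\lambda_i( A + B ) \geq \lambda_i( A )$ for every $i$, so any eigenvalue of $A + B$ that is negative forces the corresponding eigenvalue of $A$ to be negative; hence $A + B$ has at most $k$ negative eigenvalues. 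Combined, $A + B \in \bs_n^{(k-1)} \cup \bs_n^{(k)}$, with the convention $\bs_n^{(-1)} := \emptyset$ handling $k = 0$.

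The case of $A - B$ is entirely symmetric. From $\lambda_i( A - B ) \leq \lambda_i( A )$, any negative eigenvalue of $A$ contributes a negative eigenvalue to $A - B$, so $A - B$ has at least $k$ negative eigenvalues. From $\lambda_i( A - B ) \geq \lambda_{i + 1}( A )$, taking $i = n - k - 1$ gives $\lambda_{n - k - 1}( A - B ) \geq \lambda_{n - k}( A ) \geq 0$ (vacuous when $k = n$), so the top $n - k - 1$ eigenvalues of $A - B$ are non-negative, bounding the negative count by $k + 1$. Hence $A - B \in \bs_n^{(k)} \cup \bs_n^{(k+1)}$, with $\bs_n^{(n+1)} := \emptyset$ absorbing the case $k = n$.

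There is no real obstacle here; the only care needed is in the bookkeeping at the extremes ($k = 0$ and $k = n$), where one of the two alternatives is empty and the relevant interlacing inequality becomes vacuous. The argument uses no hypothesis on the entries of $A$ and $B$ beyond symmetry and the rank-one positive semidefiniteness of $B$, so nothing from the surrounding entrywise-functional-calculus setup is required.
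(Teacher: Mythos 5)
Your proof is correct and rests on exactly the same tool as the paper's, namely Weyl's rank-one interlacing inequalities (what the paper cites as \cite[Corollary~4.3.9]{HJ}). The only differences are cosmetic: you order eigenvalues non-increasingly and count negative eigenvalues directly, whereas the paper orders them non-decreasingly and phrases the membership $C \in \bs_n^{(k)}$ as the condition $0 \in (\lambda_k(C), \lambda_{k+1}(C)]$; and you handle $A - B$ by a symmetric interlacing argument, while the paper derives it by applying the already-proved $A+B$ case to $A' := A - B$ (if $A - B \in \bs_n^{(l)}$ then $A = (A-B)+B \in \bs_n^{(l-1)} \cup \bs_n^{(l)}$, forcing $l \in \{k, k+1\}$). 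One tiny imprecision in your write-up: the parenthetical ``vacuous when $k = n$'' in the $A-B$ part should really read ``vacuous when $k \geq n - 1$'', since $i = n - k - 1$ falls outside the range $[1, n-1]$ already at $k = n-1$; this costs nothing because for $k \geq n - 1$ the bound $k + 1 \geq n$ is trivially satisfied.
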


\begin{proof}
Let $\lambda_1( C ) \leq \cdots \leq \lambda_n( C )$ denote the
eigenvalues of the $n \times n$ real symmetric matrix $C$, repeated
according to multiplicity, and let $\lambda_0( C ) := -\infty$ and
$\lambda_{n + 1}( C ) := \infty$. 

We note first that $C \in \cS_n^{(k)}$ if and only if
$0 \in ( \lambda_k( C ), \lambda_{k + 1}( C ) ]$,
for $k = 0$, $1$, $\ldots,$ $n$.

If $A$ and $B$ are as in the statement of the lemma, then the following
inequalities are a corollary \cite[Corollary~4.3.9]{HJ} of Weyl's
interlacing theorem:
\[
\lambda_1(A) \leq \lambda_1(A+B) \leq \lambda_2(A) \leq \cdots \leq
\lambda_n(A) \leq \lambda_n(A+B).
\]
It follows that
$0 \in ( \lambda_k( A ), \lambda_{k + 1}( A ) ] = I_1 \cup I_2$, where
$I_1 := ( \lambda_k( A ), \lambda_k( A + B ) ]$
and $I_2 := ( \lambda_k( A + B ), \lambda_{k + 1}( A ) ]$.

Then either $k \geq 1$ and
$0 \in I_1 \subseteq( \lambda_{k - 1}( A + B ), \lambda_k( A + B ) ]$,
so $A + B \in \cS_n^{(k - 1)}$,
or $0 \in I_2 \subseteq( \lambda_k( A + B ), \lambda_{k + 1}( A + B ) ]$,
so $A + B \in \cS_n^{(k)}$. The first claim follows.

For the second part, note that $A - B \in \cS_n^{(l)}$ for some
$l \in \{ 0, \ldots, n \}$, so the previous working gives that
$A = ( A - B ) + B \in \cS_n^{(l - 1)} \cup \cS_n^{(l)}$.
As $A \in \cS_n^{(k)}$, it follows that $l - 1 = k$ or $l = k$.
This completes the proof.
\end{proof}

The next lemma is a simple but useful observation about inertia.

\begin{lemma}\label{Lsylv}
Let $p$ and $q$ be non-negative integers and suppose
$\{ \bu_1, \ldots, \bu_p, \bv_1, \ldots, \bv_q \}$ is a set of linearly
independent vectors in $\R^n$, so that $n \geq p + q$. The matrix
\[
\bu_1 \bu_1^T + \cdots + \bu_p \bu_p^T - \bv_1 \bv_1^T - \cdots - \bv_q \bv_q^T
\]
has exactly $p$ positive and $q$ negative eigenvalues.
\end{lemma}

\begin{proof}
If necessary, we extend the given set to a basis by adding
vectors $\bw_1$, \ldots, $\bw_r$, where $r = n - p - q$. If
$C$ is the $n \times n$ matrix with these basis vectors as columns
and~$B$ is the $n \times n$ matrix defined in the statement of
the lemma then $B = C D C^T$, where
\[
D = \begin{bmatrix}
 \Id_p & 0 & 0 \\ 0 & -\Id_q & 0 \\ 0 & 0 & \bzero_{r \times r} 
\end{bmatrix}.
\]
The matrix $C$ has full rank, so is invertible, and it follows from
Sylvester's law of inertia \cite[Theorem~4.5.8]{HJ} that
$B$ and $D$ have the same inertia.
\end{proof}

\begin{proof}[Proof of Theorem~\ref{Tstrongest}]
We first show that (3) implies (2) (and this holds without any
restrictions on the domain $I \subset \R$).
 
If $f( x ) \equiv d$ for some $d \in \R$ then
$f[ A ] \in \overline{\cS_n^{(1)}} \subset \overline{\cS_n^{(l)}}$,
since the matrix $d \bone_{n \times n} = d \bone_n \bone_n^T$ has
nullity at least $n-1$ and the eigenvalue $n d$. This also gives one
implication for the final claim in the statement of the theorem, since
$f[ A ] = d \bone_{n \times n} \in \cS_n^{(0)}$ if $d \geq 0$. Moreover,
if $d < 0$ then $f[ A ] \in \cS_n^{(1)}$ and so $f$ does not map
$S_n^{(k)}$ into $\cS_n^{(0)}$ in this case.

Next, suppose $f( x ) = f( 0 ) + c x$ with $c > 0$, and let $A \in \cS_n^{(k)}$.
If $f( 0 ) \geq 0$ then
$f[ A ] = f( 0 ) \bone_{n \times n} + c A \in \cS_n^{(k-1)} \cup \cS_n^{(k)}$,
by Lemma~\ref{LWeyl}, and
so $f[ A ] \in \overline{\cS_n^{(l)}}$ as long as $l \geq k$.
If instead $f(0) < 0$ then, again by Lemma~\ref{LWeyl},
$f[A] = f(0) \bone_{n \times n} + cA$ has at most $k+1$ negative
eigenvalues and $k + 1 \leq l$ as long as $l > k$.

This shows $(3) \implies (2)$ and its $l = 0$ analogue in the
penultimate sentence of Theorem~\ref{Tstrongest}. For the $k = 0$
analogue in the final sentence, if $f$ has the prescribed form then
$g : x \mapsto f( x ) - f( 0)$ is absolutely monotone, so
$g[-] : \cS_n^{(0)}(I) \to \cS_n^{(0)}$ by
Schoenberg's Theorem~\ref{Tschoenberg}. Now Lemma~\ref{LWeyl} implies
that $f[-] : \cS_n^{(0)}(I) \to \overline{\cS_n^{(1)}} \subset
\overline{\cS_n^{(l)}}$, as required.

Next, we note that this working also yields the implication $(3)
\implies (1)$, including the $l=0$ analogue. Indeed, if (3) holds and
$k' \in [ 1 : k ]$ then the transform $f[ - ]$ maps
$\cS_n^{(k')}(I)$ to $\overline{\cS_n^{(l)}}$ by the $(3) \implies (2)$
result, whereas $f[ - ]$ maps $\cS_n^{(0)}(I)$ to
$\overline{\cS_n^{(l)}}$ by the $k = 0$ analogue.

That (1) implies (2) is immediate. It remains to show $(2) \implies (3)$.
Henceforth, we suppose that $I = (-\rho,\rho)$, where $0 < \rho
\leq \infty$, the non-negative integers $k$ and $l$ are not both zero,
and $f : I \to \R$ is such that $f[-]:\cS_n^{(k)}(I) \to
\overline{\cS_n^{(l)}}$ for all $n \geq k$, or for all $n \geq 1$ if $k =
0$.

To show that $f$ has the form claimed in each case, we proceed in a
series of steps.

\noindent \textit{Step 1:
Let $g: I \to \R$ be defined by setting $g( x ) := f( x ) - f( 0 )$.
Then $g$ is absolutely monotone.}

Indeed, by Lemma~\ref{LWeyl},
$g[-] : \cS_N^{(k)}(I) \to \overline{\cS_N^{(l+1)}}$ for all $N \geq k$.
If $A \in \cS_n^{(0)}(I)$, then the block-diagonal matrix
\[
D := ( -\eps \, \Id_k ) \oplus A^{\oplus ( l + 2 )} \in \cS_N^{(k)},
\]
where $\eps \in ( 0, \rho )$, there are $l+2$ copies of $A$ along the
block diagonal, and we have that $N = k + ( l + 2 ) n$.
It follows that
\begin{equation}\label{Ereptrick}
g[ D ] = ( g( -\eps ) \, \Id_k ) \oplus g[A]^{\oplus (l+2)} \in
\overline{\cS_N^{(l+1)}}.
\end{equation}
Thus, the block-diagonal matrix $g[A]^{\oplus (l+2)}$ can have at most
$l+1$ negative eigenvalues. This is only possible if $g[A]$ has no such
eigenvalues, which implies that $g[A]$ is positive semidefinite whenever
$A$ is. Thus, by Schoenberg's Theorem~\ref{Tschoenberg}, the function $g$
is absolutely monotone.
(This is the replication trick mentioned at the beginning of this section.)

If $k = 0$ we are now done, so henceforth we assume that $k \geq 1$.

\noindent \textit{Step 2:
If $g$ is as in Step~1 and $B \in \cS_n^{(0)}(I)$ has rank~$k$ then
$g[B]$ has rank at most $l$.}

Define an absolutely monotone function $h : I \to \R$ by setting
$h(x) := f(x) + |f(0)|$.  If $C \in \cS_n^{(k)}( I )$ then
$f[C] \in \overline{\cS_n^{(l)}}$, by assumption, and so
$h[C] = f[C] + |f( 0 )| \bone_{n \times n} \in \overline{\cS_n^{(l)}}$,
by Lemma~\ref{LWeyl}. Thus $h[-]:\cS_n^{(k)}(I) \to \overline{\cS_n^{(l)}}$
for all $n \geq k$. Applying Lemma~\ref{Lblock} with the matrices
$A = \bzero_{n \times n}$ and $B$ as above yields
\[
-g[ B ] = h[ \bzero_{n \times n} ] - h[ B ] \in \overline{\cS_n^{(l)}}.
\]
However,  the matrix $g[ B ]$ is positive semidefinite, by Schoenberg's
Theorem~\ref{Tschoenberg}, so it has no negative eigenvalues, and hence
has rank at most $l$.

We now resolve the $l = 0$ case in Step~3, before working with
$l>0$ in Steps~4 and~5.

\noindent \textit{Step 3:
If $f[-] : \cS_n^{(k)}(I) \to \cS_n^{(0)}$ for all
$n \geq k$ then $f$ is constant.}

Suppose the assumption holds; let $\eps \in ( 0, \rho )$ and note that
the function $g$ from Step~1 applied entrywise sends the matrix
$\eps \Id_k$ to $g( \eps ) \Id_k$.
By Step~2, the latter matrix has rank~$0$, so $g( \eps ) = 0$.
Since $g$ is absolutely monotone and vanishes on $( 0, \rho )$,
it must be zero, by the identity theorem. Hence $f$ is constant.

\noindent \textit{Step 4:
If $f$ is non-constant then $f$ is linear with positive slope.}

Let $g( x ) = \sum_{j = 1}^\infty c_j x^j$, where $c_j \geq 0$
for all $j$. We first recall \cite[Theorems~A and~B]{GKR-lowrank},
which imply that if $n \geq \max\{ k, l + 3 \}$ and the entrywise
transform $g[ - ]$ sends matrices in $\cS_n^{(0)}(I)$ with rank at most
$k$ to matrices with rank at most $l$, then $g$ is a polynomial. Hence it
follows from these results and Step~2 that~$g$ is a polynomial.
Moreover, by Lemma~\ref{LWeyl}, we have that
$g[-] : \cS_n^{(k)}(I) \to \overline{\cS_n^{(l+1)}}$ for all $n \geq k$.

We now suppose for contradiction that $c_r > 0$ for some $r \geq 2$.
Let $N \geq l+3$ be a positive integer and, for some $u_0 \in ( 0, 1 )$, let
\begin{equation}\label{EdNu}
\bu := ( 1, u_0, \ldots, u_0^{N - 1} )^T \in ( 0, 1 ]^N.
\end{equation}
For any tuple of distinct positive integers
$\bs = ( s_0, \ldots, s_{l + 2})$, the vectors
\[
\bu^{\circ s_i} = ( 1, u_0^{s_i}, \ldots, u_0^{(N - 1) s_i} )^T
\qquad \bigl( i \in [  0 : l + 2 ] \bigr)
\]
are linearly independent, as they form $l+3$ columns of a generalized
Vandermonde matrix; for the definition and properties of these, see,  for example, \cite[Section~4.1]{Khare}.
Our strategy is to apply $g[ - ]$ to a matrix of the
form $C := B \oplus -\eps \Id_{k - 1}$, where
\begin{equation}\label{EBs}
B := \eps \Bigl( -\epsilon \bu^{\circ s_0} ( \bu^{\circ s_0})^T + %
\sum_{i = 1}^{l + 2} \bu^{\circ s_i} ( \bu^{\circ s_i} )^T \Bigr) \in %
\cS_N^{(1)}
\end{equation}
as long as $\eps$ and $\epsilon$ are both positive, by Lemma~\ref{Lsylv}.
Then $C$ lies in $\cS_{N + k - 1}^{(k)}\bigl( ( 0, \rho ) \bigr)$
whenever $\eps \in \bigl( 0, \rho / ( l + 2 ) \bigr)$ and $\epsilon$
is positive and sufficiently small. It follows that
\[
g[ C ] = g[ B ] \oplus g( -\eps ) \Id_{k -1}
\]
has at most $l+1$ negative eigenvalues, whence so does the $N \times N$
matrix $g[ B ]$. We will show this to be false for a judicious choice of
$\bs$: its entries will form a generalized Sidon set, as explained in
Remark~\ref{Rsidon}.

We now let $s_i := ( d + 1 )^i$ for $i \in [ 0 : l + 2 ]$, where $d$ is
the degree of the polynomial~$g$, and suppose $N > ( d + 1 )^{l + 3}$.
We compute by multinomial expansion that
\begin{equation}\label{Ewm}
g[ B ] = \sum_{j = 1}^d c_j \eps^j \sum_{m_0 + \cdots + m_{l + 2} = j} %
( -\epsilon )^{m_0} \binom{j}{m_0, \ldots, m_{l + 2}} \bw_\bm \bw_\bm^T,
\end{equation}
where the sum is taken over
$\bm = ( m_0, \ldots, m_{l + 2} ) \in \Z_+^{l + 3}$, the vector
\[
\bw_\bm := \bu^{\circ \bs \cdot \bm} \in \R^N \quad \text{and} \quad %
\bs \cdot \bm = \sum_{i = 0}^{l + 2} m_i (d + 1)^i.
\]
By considering base-$(d + 1)$ representations of non-negative integers, it
is clear that the map
\[
[ 0 : d ]^{l + 3} \to \Z; \ %
\bm \mapsto \bs \cdot \bm = \sum_{i = 0}^{l + 2} m_i (d+1)^i
\]
is injective and its domain $[ 0 : d ]^{l + 3}$ has size $( d + 1)^{l + 3} < N$.
It follows by Vandermonde theory that the collection of vectors of the form
$\bw_\bm$ appearing in (\ref{Ewm}) is linearly independent
(again, see \cite[Section~4.1]{Khare}).
Hence, by Lemma~\ref{Lsylv}, the matrix $g[ B ]$ has precisely as many
negative eigenvalues as there are summands in (\ref{Ewm}) with a negative coefficient,
that is, those with $c_j > 0$ and $m_0$ odd. There are at least $l + 2$
such summands, when $j = d > 1$ and
\[
\bm \in %
\bigl\{ ( 1, d - 1, 0, \ldots, 0), \ (1, 0, d - 1, 0, \ldots, 0), \ \ldots, \ ( 1, \ldots, 0, d - 1) \bigr\}.
\]
Hence $g[ B ]$ has at least $l+2$ negative eigenvalues,
which is the desired contradiction.

\noindent \textit{Step 5:
Concluding the proof.}

We may assume $f( x ) = f( 0 ) + c x$, where $c > 0$, and we must show
that $l \geq k$ and that $f( 0 ) \geq 0$ if $l = k$.

If $l > k$ there is nothing to prove. If $l < k$ then we claim no such
function $f$ exists. Let $\{ v_1 := \bone_{k + 1}, v_2, \ldots, v_{k + 1} \}$
be an orthogonal basis of $\R^{k + 1}$, fix $\delta \in ( 0, \rho )$, and
choose a positive $\epsilon$ small enough so that the entries of the
matrix
\begin{equation}\label{Ectrex}
A := \delta \bone_{k + 1} \bone_{k + 1}^T - \epsilon \sum_{j=2}^{k+1} v_j v_j^T
\end{equation}
lie in $(0,\rho)$. Then $A \in \cS_{k + 1}^{(k)}(I)$ and
\[
f[ A ] = ( f( 0 ) + c \delta ) \bone_{k + 1} \bone_{k + 1}^T - %
c \epsilon \sum_{j=2}^{k+1} v_j v_j^T %
\in \cS_{k+1}^{(k)} \cup \cS_{k+1}^{(k+1)},
\]
as we know the eigenvalues explicitly. Hence $f[-]$ cannot send
$\cS_{k+1}^{(k)}(I)$ to $\overline{\cS_{k+1}^{(l)}}$ if $l<k$.

The final case to consider is when $l=k$ and $f( 0 ) < 0$, but the
counterexample~(\ref{Ectrex}) also works here if we insist that
$\delta < | f( 0 ) | / c$, whence $f( 0 ) + c \delta < 0$ and therefore
$f[ A ] \in \cS_k^{(k+1)}$; this shows that $f[A] \not\in \overline{\cS_{k+1}^{(l)}}$.
\end{proof}

\begin{remark}\label{Rsidon}
For completeness, we note that the specific integers $s_j = (d + 1)^j$
chosen in the proof of Step~4 above work because the map
$\bm \mapsto \bs \cdot \bm$ is injective. As long as~$N$ is taken
sufficiently large, we could instead have used any real tuple
$\bs$ that satisfies this condition; such positive-integer tuples,
for a fixed value of $\sum_j m_j$, are called $B_d$ or Sidon sets.
This is a very well studied notion in number theory and additive
combinatorics;
for early work in this area, see Singer~\cite{Singer}, Erd\"os and
Tur\'an~\cite{ET}, and Bose and Chowla~\cite{BC}.
\end{remark}

\begin{remark}
Atzmon and Pinkus studied entrywise transforms of rectangular matrices
that preserve bounds on rank: see \cite{Pinkus}.
\end{remark}

With Theorem~\ref{Tstrongest} at hand, we show the remaining results
above.

\begin{proof}[Proof of Theorems~\ref{Tinertia} and~\ref{Tpreserver}]
It is straightforward to verify that (2) implies (1) for both theorems.

Next, suppose $k \geq 1$. If $f[-]$ preserves the inertia of all matrices
in $\cS_n^{(k)}(I)$ then $f[-]$ sends $\cS_n^{(k)}(I)$ into
$\cS_n^{(k)}$.
If this holds for all $n$ then Theorem~\ref{Tstrongest} with $l=k$ gives
that either $f(x) \equiv d$ for some $d \in \R$, or $f$ is linear, so
$f(x) \equiv f(0) + c x$, with  $f(0) \geq 0$ and $c>0$.

If $k \geq 2$ then $f(x) \equiv d$ cannot send $\cS^{(k)}(I)$ to
$\cS^{(k)}$. Moreover, if $k=1$ then indeed $f(x) \equiv d$ preserves
$\cS^{(1)}$ for $d<0$ and does not do so if $d \geq 0$. Finally,
if $\eps \in ( 0 , \rho / 2 )$ then 
the matrix $\displaystyle \eps \begin{bmatrix} 1 & 2 \\ 2 & 1 \end{bmatrix}$
shows that no constant function preserves the inertia of all matrices in
$\cS^{(1)}(I)$. Thus, we now assume $f$ is of the form
$f( x ) = f( 0 ) + c x$, with $f( 0 ) \geq 0$ and $c > 0$, and show that $f(0) = 0$.

By hypothesis, if $\eps \in (0,\rho)$ then
\begin{equation}\label{Etemp}
f[-\eps \Id_k] = f( 0 ) \bone_{k \times k} - c \eps \Id_k \in \cS_k^{(k)}.
\end{equation}
As $f(0) \bone_{k \times k}$ has the eigenvalue $k f( 0 )$, so
$f[ - \eps \Id_k ]$ has the eigenvalue $k f( 0 ) -  c \eps$, which can be
made positive if $f( 0 ) > 0$ by taking $\eps$ sufficiently small. Thus $f(0) = 0$
and this concludes the proof for $k \geq 1$.

It remains to show that (1) implies (2) in Theorem~\ref{Tinertia} when
$k=0$. If $f[-]$ preserves the inertia of positive semidefinite matrices then, by
Schoenberg's Theorem~\ref{Tschoenberg}, the function
$f$ has a power-series representation $f(x) = \sum_{n=0}^\infty c_n x^n$
on $I$, with $c_n \geq 0$ for all $n \geq 0$.
Suppose $c_r > 0$ and $c_s > 0$ for distinct non-negative
integers $r$ and $s$. Applying $f[-]$ to the rank-one matrix
$A = \eps \bu \bu^T$, where $\bu = ( 1, 1 / 2 )^T$
and $\eps \in (0,\rho)$, and using the Loewner ordering, we see
that
\[
f[A] \geq c_r A^{\circ r} + c_s A^{\circ s} = %
c_r \eps^r \bu^{\circ r} ( \bu^{\circ r} )^T + %
c_s \eps^s \bu^{\circ s} ( \bu^{\circ s} )^T,
\]
where $\bu^{\circ r}$ and $\bu^{\circ s}$ are not
proportional. Hence $f[A]$ is positive definite, and therefore
non-singular, while $A$ is not. This contradicts the hypotheses, so 
the power series representing $f$ has at most one non-zero term.

Finally, we claim that $f$ is a homothety. If not, say $f(x) = c x^n$ for
$c > 0$ and $n \geq 2$, then we apply $f$ to the rank-$2$ matrix
$B = \eps ( \bone_{3 \times 3} + \bu \bu^T )$,
where $\bu = ( x, y , z)^T$ has distinct positive entries
and $\eps$ is positive and sufficiently small to ensure $B$ has
entries in $I$. The binomial theorem gives that
\[
B^{\circ n} = \eps^n
\sum_{j=0}^n \binom{n}{j} (x^j, y^j, z^j) (x^j, y^j, z^j)^T \geq
\eps^n \sum_{j=0}^2 \binom{n}{j} (x^j, y^j, z^j) (x^j, y^j, z^j)^T =: B',
\]
and the column space of $B'$ contains $(x^j, y^j, z^j)^T$ for
$j = 0$, $1$, and $2$. These three vectors are linearly independent, as a
Vandermonde determinant demonstrates, so $B'$ is positive definite, hence
non-singular. Then so is $B^{\circ n}$, while $B$ has rank $2$ by
construction. This contradicts the hypothesis, and so $n=1$ and
$f(x) = c x$ as claimed.
\end{proof}

\begin{proof}[Proof of Theorem~\ref{Tatmostk}]
If $f$ is constant then
$f[A] \in \overline{\cS_n^{(1)}} \subseteq \overline{\cS_n^{(k)}}$
for any $n \times n$ matrix $A$ and any positive integer $k$. Furthermore,
if $f(x) = f(0) + c x$, with $c>0$ and $f(0) \geq 0$,
then $f[A] = f( 0 ) \bone_{n \times n} + c A \in \overline{\cS_n^{(k)}}$ for
any $A \in \overline{\cS_n^{(k)}}$, by Lemma~\ref{LWeyl}.

Conversely, if $f[-]$ sends
$\overline{\cS_n^{(k)}}(I)$ to $\overline{\cS_n^{(k)}}$ for all
$n \geq 1$ then, in particular, it sends $\cS_n^{(k)}(I)$ to
$\overline{\cS_n^{(k)}}$.
Theorem~\ref{Tstrongest} now shows that $f$ has the form claimed.
\end{proof}

\section{Entrywise preservers of $k$-indefinite Gram
matrices}\label{Spontryagin}

\subsection{Gram matrices in Pontryagin space}

In analogy with the standard version of Schoenberg's theorem, we may
interpret the main result of this section as a classification of
entrywise preservers of finite correlation matrices in a Hilbert space
endowed with an indefinite metric. To be more specific,
we introduce the following terminology.

\begin{definition}
A \emph{Pontryagin space} is a pair $( H, J )$, where
$H$ is a separable real Hilbert space and $J : H \to H$ is 
a bounded linear operator such that $J = J^*$ and $J^2 = \Id_H$,
the identity operator on $H$. Note that $J$ is an isometric isomorphism.

We write $J = P_+ - P_-$, where $P_+$ and $P_-$ are orthogonal
projections onto the eigenspaces
$H_+ := \{ x \in H : J x =x \}$ and $H_- := \{ x \in H : J x = - x \}$,
respectively, so that $H = H_+ \oplus H_-$.

We say that the Pontryagin space has \emph{negative index $k$} if
$\dim H_- = k$. We assume henceforth that $k$ is positive and finite.
\end{definition}

Here we follow Pontryagin's original convention from \cite{Pontryagin},
that the negative index is taken to be finite, as opposed to that used by
Azizov and Iokhvidov \cite{Azizov-Iokhvidov}, where the positive index
$\dim H_+$ is required to be finite. In the same way that the sign choice
for Lorentz metric is merely a convention, there is no difference between
the theories which are obtained.

These spaces provide the framework for a still active, important branch
of spectral analysis. We refer the reader to the classic
monograph on the subject \cite{Azizov-Iokhvidov}.

\begin{definition}
The Pontryagin space $( H, J )$ carries a continuous symmetric
bilinear form $[ \cdot, \cdot ]$, where
\[
[ u, v ] := \langle u, J v\rangle \qquad \text{for all } u, v \in H.
\]
Given any $u \in H$, let $u_+ := P_+u $ and $u_- := P_- u$,
so that $u = u_+ + u_-$ and $J u = u_+ - u_-$. Then
\[
[u_+ + u_-, u_+ + u_- ] = \| u_+ \|^2 - \| u_- \|^2.
\]
\end{definition}

The analogy with Minkowski space endowed with the Lorentz metric is
obvious and it goes quite far \cite{Azizov-Iokhvidov}. We include, for
completeness, a proof of the following well known lemma.

\begin{lemma}\label{Lkappa}
\begin{enumerate}
\item Suppose $( H, J )$ is a Pontryagin space of negative index $k$. If
$( v_1, \ldots, v_n )$ is an $n$-tuple of vectors in the Hilbert space
$H$ and
\[
a_{i j} := [ v_i, v_j ] \qquad \textrm{for all } i, j \in [ 1 : n ]
\]
then the $n \times n$ \emph{correlation matrix}
$A = ( a_{i j } )_{i, j = 1}^n$ is real, symmetric,
and admits at most $k$ negative eigenvalues, counted with multiplicity.

\item Conversely, let $A = ( a_{i j} )_{i, j = 1}^n$ be a real symmetric
matrix with at most $k$ negative eigenvalues, counted with multiplicity.
There exists a Pontryagin space $( H, J )$ of negative index $k$ and an
$n$-tuple of vectors $( v_1, \ldots, v_n )$ in $H$ such that
\[
a_{i j} = [ v_i, v_j ] \qquad \textrm{for all } i, j  \in [ 1 : n ].
\]
\end{enumerate}
\end{lemma}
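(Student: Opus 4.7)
The plan is to handle the two parts separately, each time exploiting the orthogonal decomposition $H = H_+ \oplus H_-$ to reduce the indefinite form $[\cdot,\cdot]$ to genuine Hilbert-space inner products.

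For part (1), I would first record that symmetry and reality of $A$ are immediate from $J = J^*$ and $J^2 = \Id_H$. To bound the negative eigenvalues, decompose each vector as $v_i = v_i^+ + v_i^-$ with $v_i^\pm \in H_\pm$; a short computation gives
\[
a_{ij} = \langle v_i^+, v_j^+\rangle - \langle v_i^-, v_j^-\rangle = A_+(i,j) - A_-(i,j),
\]
where $A_\pm$ are Gram matrices in the Hilbert-space sense and hence positive semidefinite. Since $v_1^-,\ldots,v_n^-$ lie in the $k$-dimensional space $H_-$, expanding them in an orthonormal basis of $\spn\{v_i^-\}$ realizes $A_-$ as $C C^T$ for some $n \times k$ real matrix $C$, so $\rank A_- \leq k$. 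It then remains to observe that any symmetric matrix of the form $A_+ - A_-$ with $A_+$ positive semidefinite and $\rank A_- \leq k$ has at most $k$ negative eigenvalues: if it were negative definite on a $(k+1)$-dimensional subspace $W$, then $W \cap \ker A_-$ would have dimension at least $(k+1) + (n-k) - n = 1$, producing a nonzero $x$ with $x^T A x = x^T A_+ x \geq 0$, a contradiction.

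For part (2), the construction is explicit. Take $H := \R^k \oplus \R^n$ with its standard inner product, and let $J$ act as $-\Id_k$ on the first summand and $\Id_n$ on the second; then $H$ is a separable real Hilbert space, $J = J^*$, $J^2 = \Id_H$, and $\dim H_- = k$. Spectrally diagonalize $A$ and split it as $A = A_+ - A_-$, where $A_\pm$ collect the non-negative and (the absolute values of the) negative eigenvalues respectively; both are positive semidefinite and $\rank A_- \leq k$ by hypothesis. Factor $A_+ = M_+^T M_+$ with $M_+ \in \R^{n \times n}$ and $A_- = M_-^T M_-$ with $M_- \in \R^{k \times n}$, zero-padding extra rows if $A$ has strictly fewer than $k$ negative eigenvalues. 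Defining $v_i \in H$ to be the concatenation of the $i$th columns of $M_-$ and $M_+$, a direct computation gives $[v_i,v_j] = -(M_-^T M_-)_{ij} + (M_+^T M_+)_{ij} = a_{ij}$, as required.

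The only step needing genuine thought is the dimension-counting argument at the end of part (1); the rest is routine bookkeeping with the decomposition $H = H_+ \oplus H_-$ and, in part (2), with spectral decomposition and Cholesky-type factorization.
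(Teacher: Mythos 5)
Your proof is correct and follows essentially the same route as the paper's: in part (1), writing $a_{ij} = A_+(i,j) - A_-(i,j)$ with $A_\pm$ Gram matrices of the $H_\pm$-components is exactly the paper's identity $T^*JT = T^*P_+T - T^*P_-T$, and in part (2) the spectral splitting $A = A_+ - A_-$ together with square-root/Cholesky factors mirrors the paper's use of $\sqrt{TQ_+}$ and $\sqrt{-TQ_-}$ (your ambient $\R^k \oplus \R^n$ is slightly larger than the paper's $Q_+\R^n \oplus (Q_-\R^n \oplus \R^{k-r})$, but both realize negative index exactly $k$). One small improvement on your side: the paper simply asserts that $T^*P_+T - T^*P_-T$ has at most $k$ negative eigenvalues, whereas your dimension-count $\dim(W \cap \ker A_-) \geq (k+1) + (n-k) - n = 1$ makes that step explicit.
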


Having reminded the reader that we count eigenvalues with multiplicity,
we again leave this implicit.

\begin{proof}
(1) That $A$ is real and symmetric is immediate. Let $T: \R^n \to H$ be the
linear transform mapping the canonical basis vector~$e_j$
into $v_j$ for $j = 1$, $\ldots,$ $n$. Then
\[
a_{i j} = \langle e_i, T^\ast J T e_j \rangle %
\qquad \textrm{for all } i, j \in [ 1 : n ],
\]
and the self-adjoint operator $T^\ast J T = T^\ast P_{+}T - T^\ast P_{-} T$
has at most $k$ negative eigenvalues.

(2) Let $T$ denote the linear transformation of Euclidean space $\R^n$
corresponding to the given matrix $A$. Thus
\[
a_{i j} = \langle e_i, T e_j \rangle \qquad \textrm{for all } i, j \in [ 1 : n ],
\]
where $e_1$, $\ldots,$ $e_n$ are the canonical basis vectors in $\R^n$.
The spectral decomposition of~$T$ provides orthogonal projections
$Q_+$ and $Q_- = \Id - Q_+$ on $\R^n$ that commute with~$T$ and are such
that $T Q_{+} \geq 0$, $-T Q_- \geq 0$, and $r := \rank Q_- \leq k$.

Setting $H_+ := Q_+ \R^n$ and $H_- := ( Q_- \R^n ) \oplus \R^{k - r}$,
let $( H, J )$ be the Pontryagin space of negative index $k$
obtained by equipping $H = H_+ \oplus H_-$ with the map
$J : x_+ + x_- \mapsto x_+ - x_-$ for all $x_+ \in H_+$ and $x_- \in H_-$.

Let $v_i := \sqrt{T Q_{+}} e_i + \sqrt{-T Q_{-}} e_i$ for $i = 1$,
$\ldots,$ $n$. Since $\sqrt{T Q_+}$ maps $\R^n$ into $H_+$ and $\sqrt{-T
Q_-}$ maps $\R^n$ into $H_-$, we have that
\begin{align*}
[ v_i, v_j ] & = \langle \sqrt{T Q_{+}}e_i + \sqrt{-T Q_{-}}e_i, %
J ( \sqrt{T Q_{+}} e_j + \sqrt{-T Q_{-}}e_j ) \rangle \\
 & = \langle \sqrt{T Q_{+}} e_i,  \sqrt{T Q_{+}} e_j \rangle -  %
 \langle \sqrt{-T Q_{-}} e_i, \sqrt{-T Q_{-}} e_j \rangle\\
 & = \langle e_i, T Q_{+} e_j \rangle + \langle e_i, T Q_{-} e_j \rangle \\
 & =  \langle e_i, T (Q_{+} + Q_{-}) e_j \rangle \\
 & = a_{i j}. \qedhere
\end{align*}
\end{proof}

A small variation of the first observation above provides the following
stabilization result.

\begin{corollary}\label{Ckappa}
Let $( v_j )_{j = 1}^\infty$ be a sequence of vectors in a Pontryagin
space of negative index $k$. There exists a threshold $N$ such that the
number of negative eigenvalues of the Gram matrix
$\bigl( [ v_i, v_j ] \bigr)_{i, j=1}^n$ is constant for all $n \geq N$.
\end{corollary}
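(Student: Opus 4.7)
The plan is to combine the a priori bound from Lemma~\ref{Lkappa}(1) with Cauchy's interlacing theorem to show that the sequence of negative eigenvalue counts is monotone non-decreasing and bounded above, hence eventually constant.

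More precisely, denote $G_n := \bigl( [ v_i, v_j ] \bigr)_{i, j = 1}^n$ and let $\nu_n$ be the number of negative eigenvalues of $G_n$, counted with multiplicity. First, I would apply Lemma~\ref{Lkappa}(1) directly to the truncated tuple $( v_1, \ldots, v_n )$ to conclude that $\nu_n \leq k$ for every $n$; the fact that $k$ is finite is precisely the hypothesis that the ambient Pontryagin space has finite negative index.

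Next, the key structural observation is that $G_n$ is the leading principal $n \times n$ submatrix of $G_{n+1}$ (since adding the vector $v_{n+1}$ only appends a new row and column to the Gram matrix). Cauchy's interlacing theorem then says that if $\lambda_1( G_{n+1} ) \leq \cdots \leq \lambda_{n+1}( G_{n+1} )$ and $\mu_1 \leq \cdots \leq \mu_n$ are the eigenvalues of $G_{n+1}$ and $G_n$ respectively, then $\lambda_i( G_{n+1} ) \leq \mu_i$ for every $i \in [ 1 : n ]$. In particular, if $\mu_i < 0$ for $i = 1$, $\ldots,$ $\nu_n$, then $\lambda_i( G_{n+1} ) < 0$ for the same range of indices, which shows that $\nu_{n+1} \geq \nu_n$.

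Combining the two observations, $( \nu_n )_{n = 1}^\infty$ is a non-decreasing sequence of non-negative integers bounded above by $k$, so there exists $N$ such that $\nu_n = \nu_N$ for every $n \geq N$. There is no real obstacle here; the only thing worth being careful about is the direction of monotonicity (it is the number of \emph{negative} eigenvalues that can only grow under enlargement, whereas the positive count need not behave monotonically) and the fact that Cauchy interlacing is being invoked for an arbitrary real symmetric matrix rather than a positive semidefinite one, which is the standard formulation and so causes no difficulty.
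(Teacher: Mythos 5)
Your proof is correct and follows exactly the same two-step strategy as the paper's: the bound $\nu_n \leq k$ from Lemma~\ref{Lkappa}(1) together with monotonicity of $\nu_n$ via Cauchy interlacing (the paper cites \cite[Theorem~4.3.17]{HJ}). Your version just spells out the interlacing inequality $\lambda_i(G_{n+1}) \leq \mu_i$ explicitly, which the paper leaves implicit.
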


\begin{proof}
For any $n \geq 1$, the Gram matrix
$A_{[n]} :=  \bigl( [ v_i, v_j ] \bigr)_{i, j = 1}^n$
can have no more than~$k$ negative eigenvalues, by Lemma~\ref{Lkappa}(1).
Furthermore, by
the Cauchy interlacing theorem \cite[Theorem~4.3.17]{HJ}, the number of
negative eigenvalues in $A_{[n]}$ cannot decrease as $n$ increases.
The result follows.
\end{proof}

We now establish a partial converse of the above corollary,
employing an infinite-matrix version of Lemma~\ref{Lkappa}(2).

\begin{lemma}\label{LMihai}
Let $(a_{i j})_{i, j = 1}^\infty$ be an infinite real symmetric matrix
with the property that every finite leading principal submatrix of it has
at most $k$ negative eigenvalues.
Then there exists a sequence of vectors $( v_j )_{j =1}^\infty$ in a
Pontryagin space of negative index $k$ such that
\[
a_{i j} = [ v_i, v_j ] \qquad \textrm{for all } i, j \geq 1.
\]
\end{lemma}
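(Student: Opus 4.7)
My plan is to construct the Pontryagin space directly from $A$, rather than by patching together the finite-dimensional representations given by Lemma~\ref{Lkappa}(2). Let $V$ denote the free real vector space on basis vectors $e_1, e_2, \ldots$, equipped with the symmetric bilinear form $B$ determined by $B(e_i, e_j) := a_{ij}$ and extended bilinearly. The essential observation is that the negative index of $B$ on $V$ is at most~$k$: any subspace $W \subset V$ on which $B$ is negative definite is finite-dimensional and lies in $\spn(e_1, \ldots, e_N)$ for some $N$, so the Courant--Fischer min-max characterization applied to $A_{[N]}$ bounds $\dim W$ by the number of negative eigenvalues of $A_{[N]}$, hence by $k$.

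Let $V_0 := \{v \in V : B(v, w) = 0 \text{ for all } w \in V\}$ be the radical of $B$, and let $\widetilde V := V / V_0$, on which the induced form---still denoted $B$---is non-degenerate. Choose a maximal $B$-negative-definite subspace $V_- \subset \widetilde V$, of some dimension $k' \leq k$, and set $V_+ := \{u \in \widetilde V : B(u, V_-) = 0\}$. Projection along a $B$-orthonormal basis of $V_-$ yields a $B$-orthogonal direct-sum decomposition $\widetilde V = V_+ \oplus V_-$; maximality of $V_-$ forces $B$ to be positive semidefinite on $V_+$, and the Cauchy--Schwarz inequality for semidefinite forms, combined with the overall non-degeneracy of $B$ on $\widetilde V$, upgrades this to positive definite.

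Next, complete $(V_+, B|_{V_+})$ to a separable real Hilbert space $H_+$, and form $H_- := V_- \oplus \R^{k-k'}$ equipped with the positive-definite inner product equal to $-B|_{V_-}$ on the first summand and the standard inner product on the padding. The Hilbert sum $H := H_+ \oplus H_-$ with $J := P_+ - P_-$ is then a Pontryagin space of negative index exactly~$k$. Letting $v_j \in H$ be the image of $e_j$ under the natural map $V \twoheadrightarrow \widetilde V = V_+ \oplus V_- \hookrightarrow H$, the $B$-orthogonality of $V_\pm$ gives
\[
[v_i, v_j] = \langle v_i, J v_j \rangle = B(e_i, e_j) = a_{ij} \qquad \text{for all } i, j \geq 1.
\]

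The step I expect to be the main sticking point is the algebraic orthogonal decomposition $\widetilde V = V_+ \oplus V_-$ with $B$ positive definite on~$V_+$: although standard, it genuinely exploits the finiteness of the negative index, since on an infinite-dimensional space a non-degenerate symmetric bilinear form need not admit such a splitting in general. Everything else---the radical quotient, the Hilbert completion of $V_+$, the padding of $H_-$ up to dimension $k$, and the final Gram-matrix computation---is routine.
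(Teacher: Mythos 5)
Your proof is correct and takes a genuinely different route from the paper's. The paper proceeds analytically: after rescaling the matrix by a diagonal conjugation with weights $w_j$ chosen so that the rescaled entries are square-summable, it interprets the result as a self-adjoint Hilbert--Schmidt operator $B$ on $\ell^2$, argues via operator-norm approximation by finite truncations that $B$ has at most $k$ negative spectral points, and builds the Pontryagin space and the vectors $v_j$ from the spectral projections of $B$ together with square roots of the positive operators $\pm B P_\pm$. Your construction is algebraic: you put the symmetric bilinear form directly on the free vector space spanned by $e_1, e_2, \ldots$, quotient out its radical, split off a maximal finite-dimensional negative-definite subspace $V_-$ (whose dimension is bounded by $k$ because a finite-dimensional negative-definite subspace sits inside some $\spn(e_1,\ldots,e_N)$, so is bounded by the negative index of $A_{[N]}$), and then complete the positive part $V_+$ to a Hilbert space. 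The payoff of your route is that it avoids both the weight-rescaling trick and the spectral theory of compact operators, replacing them with the standard linear-algebraic decomposition of a symmetric form with finite negative index; the cost is the verification that $B$ is positive \emph{definite} (not merely semidefinite) on $V_+$ and that the algebraic splitting $\widetilde{V} = V_+ \oplus V_-$ holds, which you correctly identify as the crux and handle correctly (semidefiniteness from maximality of $V_-$, definiteness from Cauchy--Schwarz plus non-degeneracy). One minor wording point: the claim that any negative-definite subspace $W$ ``is finite-dimensional and lies in $\spn(e_1,\ldots,e_N)$'' should run the argument on a finite-dimensional sub-subspace first---interlacing bounds every \emph{finite-dimensional} negative-definite subspace by $k$, which then rules out infinite-dimensional ones---but the conclusion and the repair are both immediate.
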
 

\begin{proof}
One can select successively positive weights $( w_j )_{j = 1}^\infty$
so that the infinite matrix
\[
B = ( b_{i j} )_{i, j = 1}^\infty := ( w_i w_j a_{i j} )_{i, j = 1}^\infty = %
\diag(w_1, w_2, \ldots) ( a_{i j} ) \diag( w_1, w_2, \ldots )
\]
has square-summable entries:
\[
\sum_{i, j =1}^\infty b_{i j}^2 < \infty.
\]
(To do this, choose $w_n$ so that the sum of the squares of the entries
of $B_{[n]} = ( b_{i j} )_{i, j = 1}^n$ that do not appear in
$B_{[n - 1]} = ( b_{i j} )_{i, j = 1}^{n - 1}$ is less than $2^{-n}$.)

Then the matrix $B$ represents a self-adjoint Hilbert--Schmidt operator
on~$\ell^2$ which we denote by $B$ as well. Since all finite leading
principal submatrices of this matrix have at most $k$ negative
eigenvalues, the operator $B$ has at most $k$ spectral points, counting
multiplicities, on $(-\infty, 0)$.
(Suppose otherwise, so that there exists a ($k + 1$)-dimensional subspace
$U$ of $\ell^2$ such that $\langle x, B x \rangle < 0$ whenever
$x \in U \setminus \{ 0 \}$. The truncation
$B_n := B_{[n]} \oplus \bzero_{\infty \times \infty}$ converges to $B$
in the Hilbert--Schmidt norm, so in operator norm, as $n \to \infty$,
and therefore $\langle x, B_n x \rangle < 0$ for all $x \in U \setminus \{ 0 \}$
and all sufficiently large $n$, a contradiction.)

Hence there exists an orthogonal projection $P_-$ on $\ell^2$ with
$r := \rank P_- \leq k$ that commutes with $B$ and is such that
$D := -B P_- \geq 0$ and $C := B P_+ \geq 0$, where $P_+ := I - P_-$. We
now proceed essentially as for the proof of Lemma~\ref{Lkappa}(2).

Define a Pontryagin space $( H, J )$ of negative index~$k$ by setting
$H := H_+ \oplus H_-$, where $H_+ := P_+ \ell^2$ and
$H_- := P_- \ell^2 \oplus \R^{k - r}$, and $J ( x_+ + x_- ) := x_+ - x_-$
whenever $x_+ \in H_+$ and $x_- \in H_-$.
If $( e_j )_{j = 1}^\infty$ is the canonical basis for $\ell^2 \subseteq H$
and $v_j := w_j^{-1} ( C^{1/2} + D^{1/2} ) e_j$ for all $j \geq 1$ then
\[
[ v_i, v_j ] = %
 w_i^{-1} w_j^{-1} \langle ( C^{1/2} + D^{1/2} ) e_i, ( C^{1/2} - D^{1/2} ) e_j \rangle =%
 w_i^{-1} w_j^{-1} \langle  e_i, ( C - D ) e_j \rangle = a_{i j}
\]
for all $i$, $j \geq 1$, as required.
\end{proof}

\subsection{Preservers of $k$-indefinite Gram matrices}

As a variation on Schoenberg's description of endomorphisms of
correlation matrices of systems of vectors lying in Hilbert space,
we now classify the entrywise preservers of the Gram matrices of
systems of vectors in a real Pontryagin space of negative index $k$.
We proceed by first introducing
some terminology and notation.

\begin{definition}
Given a non-negative integer $k$ and a sequence of vectors
$( v_j )_{j = 1}^\infty$ in a Pontryagin space, let
$a_{i j} := [ v_i, v_j ]$. The infinite real symmetric matrix
$A = ( a_{i j})_{i, j = 1}^\infty$ is a \emph{$k$-indefinite Gram matrix}
if the leading principal submatrix
$A_{[n]} := (a_{i, j})_{i, j = 1}^n$ has exactly $k$ negative
eigenvalues whenever $n$ is sufficiently large.

We denote the collection of all $k$-indefinite Gram matrices by $\sP_k$
and we let $\overline{\sP}_k := \bigcup_{j=0}^k \sP_j$,
the collection of $j$-indefinite Gram matrices for $j = 0$, $\ldots,$
$k$.
By Lemma~\ref{LMihai}, this is also the collection of all infinite
real symmetric matrices whose leading principal submatrices have
at most $k$ negative eigenvalues.
\end{definition}

Our next result classifies the entrywise preservers of $\sP_k$ and of
$\overline{\sP}_k$, in the spirit of Theorems~\ref{Tpreserver}
and~\ref{Tatmostk}. The entrywise preservers of Gram matrices in
Euclidean space are precisely the absolutely monotone functions, by
Schoenberg's Theorem~\ref{Tschoenberg}. However,
in the indefinite setting the set of preservers is much smaller.

\begin{theorem}\label{TPontryagin}
Fix a positive integer $k$ and a function $f : \R \to \R$.
\begin{enumerate}
\item The map $f[-] : \sP_k \to \sP_k$ if and only if $f$ is a
positive homothety, so that $f(x) \equiv c x$ for some $c>0$, 
or $k = 1$, in which case we may have a negative constant function,
so that $f(x) \equiv -c$ for some $c>0$.

\item The map $f[-] : \overline{\sP}_k \to \overline{\sP}_k$ if and only
if $f(x) \equiv d$ for some real constant $d$ or $f(x) = f(0) + cx$,
with $f( 0 ) \geq 0$ and $c > 0$.
\end{enumerate}
\end{theorem}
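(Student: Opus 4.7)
My plan is to reduce the infinite-matrix classification to the finite-dimensional Theorems~\ref{Tstrongest}, \ref{Tpreserver}, and~\ref{Tatmostk} via the zero-padded extension $\tilde A := A \oplus \bzero_{\infty \times \infty}$, and then to eliminate the spurious candidates that survive this reduction in part~(1) by constructing infinite test matrices in $\sP_k$ whose negative cone contains $\bone$.

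For the reduction, observe that if $A \in \bs_n^{(k)}(\R)$ then $\tilde A \in \sP_k$, since $\tilde A_{[m]} = A \oplus \bzero_{m - n}$ has exactly $k$ negative eigenvalues whenever $m \geq n$; analogously, $A \in \overline{\bs_n^{(k)}}$ gives $\tilde A \in \overline{\sP}_k$. Because $f[\tilde A]_{[n]} = f[A]$ and every matrix in $\sP_k$ or $\overline{\sP}_k$ has at most $k$ negative eigenvalues in each leading principal submatrix (by Cauchy interlacing), the hypothesis forces $f[A] \in \overline{\bs_n^{(k)}}$ whenever $A \in \bs_n^{(k)}(\R)$ in part~(1), or $A \in \overline{\bs_n^{(k)}}$ in part~(2), for every $n \geq k$. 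In part~(1) this places us in the setting of Theorem~\ref{Tstrongest} with $l = k$ (admissibility: $l = 1$ when $k = 1$, and $l = k \leq 2 k - 2$ when $k \geq 2$), so $f$ is either a real constant or of the form $f(x) = f(0) + c x$ with $c > 0$ and $f(0) \geq 0$; in part~(2) Theorem~\ref{Tatmostk} yields the conclusion directly.

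The remaining task is to refine part~(1). For the constant case, $f \equiv d$ maps every input to the infinite matrix with all entries equal to $d$, whose $m \times m$ leading principal submatrix $d \bone_m \bone_m^T$ has at most one negative eigenvalue, so $\sP_k$-preservation forces $k = 1$ and $d < 0$. For a linear $f(x) = f(0) + c x$ with $f(0) > 0$, set $a := f(0)/c$ and choose $v_j := e_{2 j - 3} - e_{2 j - 2}$ for $j = 2, \ldots, k$, which are mutually orthogonal with disjoint finite supports and each lie in $\bone^\perp$. Consider the infinite symmetric matrix $A$ with entries
\[
A_{i j} := -a - \sum_{j' = 2}^k ( v_{j'} )_i ( v_{j'} )_j.
\]
Since the rank-one summands of $A_{[m]}$ are mutually orthogonal, the spectrum of $A_{[m]}$ for $m \geq 2 k - 2$ consists of $-a m$, the value $-2$ with multiplicity $k - 1$, and zero; in particular $A \in \sP_k$. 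The cancellation $f(0) - c a = 0$ gives $f[A]_{i j} = -c \sum_{j' = 2}^k ( v_{j'} )_i ( v_{j'} )_j$, whose leading principal submatrices have exactly $k - 1$ negative eigenvalues, so $f[A] \in \sP_{k - 1}$. This contradicts the hypothesis and forces $f(0) = 0$.

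The reverse implications are immediate: positive homotheties preserve inertia; when $k = 1$ the constant function $f \equiv -c$ (with $c > 0$) sends every element of $\sP_1$ to the infinite matrix with every entry equal to $-c$, whose $m \times m$ leading principal submatrix $-c \bone_m \bone_m^T$ has exactly one negative eigenvalue; in part~(2), any constant lands in $\overline{\sP}_1 \subseteq \overline{\sP}_k$, while for the linear case $f[A]_{[m]} = f(0) \bone_m \bone_m^T + c A_{[m]}$ is a rank-one positive semidefinite perturbation of a matrix with at most $k$ negatives, which stays in $\overline{\bs_m^{(k)}}$ by Lemma~\ref{LWeyl}. The main obstacle will be the construction of the test matrix in the refinement step: zero-padded extensions used in the reduction occupy a thin corner of $\sP_k$ where the negative eigenspace is orthogonal to $\bone$, leaving the finite reduction unable to distinguish $f(0) = 0$ from $f(0) > 0$; only a test matrix whose negative cone contains $\bone$ can trigger the rank-one inertia collapse permitted by Lemma~\ref{LWeyl}.
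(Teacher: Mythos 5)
Your proof is correct, but it follows a genuinely different route from the paper's. The paper introduces the subclass $\sP_k^\fin$ of infinite matrices obtained by \emph{inflation} $\up$ (Definition~\ref{Dinflation}): repeating the last row and column of a finite matrix $A \in \bs_n^{(k)}$ produces $\wt{A}$ whose large leading principal submatrices all have exactly the same inertia as $A$, by Lemma~\ref{Linertia} (an application of Sylvester's law). This gives a clean two-sided equivalence, $f[-]:\sP_k^\fin \to \sP_k$ if and only if $f[-]:\bs_n^{(k)} \to \bs_n^{(k)}$ for all $n \geq k$, so Theorem~\ref{Tpreserver} applies directly with no cleanup needed; the argument for part~(2) is the same with $\overline{\sP}_k$ and Theorem~\ref{Tstrongest}. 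You instead use zero-padding $\tilde A := A \oplus \bzero_{\infty \times \infty}$, which is simpler to construct but lossier: under $f$ the off-block entries become $f(0)$ rather than remaining zero, so the only clean information extracted is that $f[\tilde A]_{[n]} = f[A]$ has at most $k$ negative eigenvalues. This lands you in Theorem~\ref{Tstrongest} with $l = k$ (not Theorem~\ref{Tpreserver}), which admits the spurious candidates $f \equiv d$ (for any $d$) and $f(x) = f(0) + cx$ with $f(0) > 0$, and you then rule these out with a bespoke infinite test matrix whose negative cone contains $\bone$. The construction is correct --- the orthogonal family $\bone, v_2, \ldots, v_k$ gives $A \in \sP_k$ while $f[A] = -c\sum_{j'\geq 2} v_{j'} v_{j'}^T \in \sP_{k-1}$ when $a = f(0)/c$ kills the rank-one term --- and the reverse implications are verified properly. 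Both routes work; the paper's inflation buys exactness (so no refinement step), at the cost of invoking the $\up$/Sylvester machinery, whereas your approach stays closer to first principles but must pay for the information lost in the padding with the explicit rank-collapse construction. One small imprecision worth flagging: your parenthetical diagnosis that the negative eigenspace of zero-padded extensions ``is orthogonal to $\bone$'' is not literally true (the negative eigenspace of $\tilde A_{[m]}$ lives in the span of the first $n$ coordinates, not in $\bone^\perp$); the correct point, which your construction implements, is simply that zero-padding cannot force $\bone$ into the negative cone, and only matrices where it is can exhibit the rank-one collapse under addition of $f(0)\bone\bone^T$. Also, your assertion that the spectrum of $A_{[m]}$ ``consists of $-am$, the value $-2$ with multiplicity $k-1$, and zero'' should allow for the zero eigenvalue to be absent when $m = 2k-2$ and $k = 2$; this does not affect the argument since only the negative eigenvalue count matters.
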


The key idea in the proof is to employ a construction
introduced and studied in detail in our recent work \cite{BGKP-strata},
which we now recall and study further.

\begin{definition}\label{Dinflation}
Let $\pi = \{ I_1, \ldots, I_m \}$ be a partition of the set of
positive integers $[ 1 : N ] = \{ 1, 2, \ldots, N \}$ into $m$ non-empty subsets,
so that $m \in [ 1 : N ]$. Define the \emph{inflation}
$\up : \R^{m \times m} \to \R^{N \times N}$ as the linear map such that
\[
\bone_{\{ i \} \times \{ j \}} \mapsto \bone_{I_i \times I_j},
\]
where $\bone_{A \times B}$ has $( p, q )$ entry $1$ if $p \in A$ and $q
\in B$, and $0$ otherwise.
\end{definition}

Thus, $\up$ sends every $m \times m$ matrix into one with blocks that are
constant on the rectangles defined by the partition $\pi$.

\begin{lemma}\label{Linertia}
Suppose $A$ is an $m \times m$ real symmetric matrix. Then $\up(A)$ has
the same number of positive eigenvalues and the same number of
negative eigenvalues, counted with multiplicity, as $A$ does.
\end{lemma}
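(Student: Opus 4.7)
The plan is to exhibit $\up(A)$ as a congruence of $A$ followed by an isometric embedding, and then let Sylvester's law of inertia do the bookkeeping. Writing $n_i := |I_i|$, I would introduce the diagonal matrix $D := \diag(\sqrt{n_1}, \ldots, \sqrt{n_m})$ together with the $N \times m$ matrix $U$ whose $i$th column is $u_i := n_i^{-1/2} \bone_{I_i}$, the normalized indicator vector of~$I_i$. Because $I_1, \ldots, I_m$ are pairwise disjoint, $\langle u_i, u_j \rangle = \delta_{i j}$, so $U^T U = \Id_m$.

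The crucial factorization is then
\[
\up(A) \;=\; U \, (D A D) \, U^T,
\]
which follows at once by expanding both sides using the definition $\up(A) = \sum_{i, j = 1}^m a_{i j} \bone_{I_i} \bone_{I_j}^T$ together with the identity $\bone_{I_i} = \sqrt{n_i}\, U e_i$. Since $D$ is invertible, Sylvester's law of inertia gives that $D A D$ and $A$ have the same number of positive eigenvalues and the same number of negative eigenvalues.

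Finally, I would transfer this count from $D A D$ to $\up(A)$ by completing $U$ to an $N \times N$ orthogonal matrix $\wt{U}$ through appending an orthonormal basis of the orthogonal complement of $\spn\{u_1, \ldots, u_m\}$. A direct computation then yields
\[
\wt{U}^T \, \up(A) \, \wt{U} \;=\; \begin{bmatrix} D A D & 0 \\ 0 & \bzero_{(N - m) \times (N - m)} \end{bmatrix},
\]
so $\up(A)$ is orthogonally similar to the block-diagonal matrix on the right, whose positive and negative inertia clearly coincide with those of $D A D$. Combining the two steps delivers the lemma. No step presents a genuine obstacle; the only point requiring care is to note that the isometric extension introduces $N - m$ extra zero eigenvalues which pollute neither the positive nor the negative signature.
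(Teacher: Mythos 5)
Your proof is correct and follows essentially the same route as the paper: both rest on the congruence $\up(A) = W A W^T$ with $W$ of full column rank (your $UD$ equals the paper's weight matrix $\cW_\pi$), extension of $W$ to an invertible $N \times N$ matrix, and an appeal to Sylvester's law of inertia. The only cosmetic difference is that you normalize the columns first so that the completion can be taken orthogonal, whereas the paper completes $\cW_\pi$ to an arbitrary invertible matrix; this buys nothing extra since Sylvester already handles general congruences.
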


\begin{proof}
Define the \emph{weight matrix} $\cW_\pi \in \R^{N \times m}$
to have $( i, j )$ entry $1$ if $i \in I_j$ and $0$ otherwise. As verified
in \cite{BGKP-strata}, we have that
\[
\up(A) = \cW_\pi A \cW_\pi^T\ \quad \textrm{for any } A \in \R^{m \times m}
\quad \textrm{and} \quad 
\cW_\pi^T \cW_\pi = \diag\bigl( |I_1|, \ldots, |I_m| \bigr).
\]
Since each set in $\pi$ is non-empty, the matrix $\cW_\pi$ has full
column rank and so may be extended to an invertible $N \times N$ matrix
$X_\pi = [ \cW_\pi \ \ C ]$ for some matrix
$C \in \R^{N \times ( N - m )}$. Then
\[
X_\pi \begin{bmatrix} A & 0 \\ 0 & 0 \end{bmatrix} X_\pi^T
= \cW_\pi A \cW_\pi^T = \up(A).
\]
Since $X_\pi$ is invertible, it follows from Sylvester's law of inertia
\cite[Theorem~4.5.8]{HJ}
that $\up(A)$ and $\begin{bmatrix} A & 0 \\ 0 & 0 \end{bmatrix}$ have the
same inertia, and the proof is complete.
\end{proof}

With Lemma~\ref{Linertia} at hand, we can prove
Theorem~\ref{TPontryagin}. First we introduce some notation.

\begin{definition}
Given an $n \times n$ real symmetric matrix
\begin{equation}\label{EAmat}
A = \begin{bmatrix} A_0 & \ba \\[1ex] \ba^T & a_{n n} \end{bmatrix}
\end{equation}
and a positive integer $m$, let $A_m := \up[m]( A )$ be the inflation of
$A$ according to the partition
\[
\pi_m := \bigl\{ \{ 1 \}, \ldots, \{ n - 1 \}, \{ n, \ldots, n - 1 + m \}
\bigr\},
\]
so that
\[
A_m = \begin{bmatrix}
 A_0 & \ba &  \cdots & \ba \\[1ex] \ba^T & a_{n n} & \cdots & a_{n n} \\
 \vdots & \vdots & \ddots & \vdots \\[1ex]
 \ba^T & a_{n n} & \cdots & a_{n n}
\end{bmatrix}.
\]
As $A \in \cS_n^{(k)}$ for some $k \in \{ 0, \ldots, n \}$, it follows
from Lemma~\ref{Linertia} that $A_m \in \cS_{n - 1 + m}^{(k)}$ for all
$m \geq 1$. Hence  there exists a $k$-indefinite Gram matrix $\wt{A}$
that can be considered as the direct limit of the sequence of matrices
$( A_m )_{m = 1}^\infty$, where
$\wt{A} = ( g_{i j} )_{i, j = 1}^\infty \in \sP_k$ is such that
$\wt{A}_{[N]} = ( g_{i j} )_{i, j = 1}^N = A_{N - n + 1}$ for all
$N \geq n$. In particular, the matrix $A$ is recoverable from $\wt{A}$,
since $\wt{A}_{[n]} = A$.

Below, the collection of all $k$-indefinite Gram matrices of the
form $\wt{A}$ for some $A$ as in (\ref{EAmat}) is denoted $\sP_k^\fin$.
\end{definition}

Note that
$f[ A_{[n]} ] = f[ A ]_{[n]}$ for all $n \geq 1$ and
$f[ A_m ] = f[ A ]_m$ for all $m \geq 1$, which implies that
$\wt{f[ A ]} = f[ \wt{A} ]$, whenever these quantities are well defined.

\begin{proof}[Proof of Theorem~\ref{TPontryagin}]

\begin{enumerate}
\item It is immediately seen that every positive homothety
preserves~$\sP_k$ and every negative constant sends $\sP_1$ to itself.
Furthermore, if~$f[-]$ preserves $\sP_k$ then it sends $\sP_k^\fin$
to~$\sP_k$. We now show that
\[
f[-] : \sP_k^\fin \to \sP_k \quad \textrm{if and only if} \quad
f[-] : \cS_n^{(k)} \to \cS_n^{(k)} \textrm{ for all } n \geq k.
\]
Given this, the result follows at once from Theorem~\ref{Tpreserver}.

Suppose $f[-]$ sends $\sP_k^\fin$ to $\sP_k$ and let $A \in \cS_n^{(k)}$
for some $n \geq k$. The Gram matrix $\wt{A} \in \sP_k^\fin$ so
$\wt{f[ A ]} = f[ \wt{A} ] \in \sP_k$ by assumption. Hence the
leading principal submatrix $\wt{f[ A ]}_{[N]} \in \cS_N^{(k)}$
for all sufficiently large $N$, so for some $N \geq n$,
but this matrix equals $f[ A ]_{N - n + 1}$, which has the same number of
negative eigenvalues as $f[ A ]$, by Lemma~\ref{Linertia}.
Thus $f[ - ]$ maps $\cS_n^{(k)}$ to itself.

Conversely, suppose $f[ - ]$ maps $\cS_n^{(k)}$ to itself for all $n \geq k$
and let $\wt{A} \in \sP_k^\fin$ for some $A \in \cS_n^{(k)}$.
We know that
$f[ A ] \in \cS_n^{(k)}$, by assumption, and therefore
$f[ \wt{A} ]_{[N]} = \wt{f[ A ]}_{[N]} = f[ A ]_{N - n +1} \in \cS_n^{(k)}$
for all $N \geq n$, again using Lemma~\ref{Linertia}.
Hence $f[ \wt{A} ] \in \sP_k^\fin \subseteq \sP_k$,
as claimed.

\item If $f( x ) \equiv d$ for some $d \in \R$ then
$f[ B ] = \wt{d} \in \overline{\sP}_1 \subseteq \overline{\sP}_k$
for all $k \geq 1$ and any infinite matrix $B$.
Similarly, if $f( x ) = f( 0 ) + c x$, with $f( 0 ) \geq 0$ and $c > 0$,
then $f[ B ] = \wt{f( 0 )} + c  B$,
so $f[ A ]_{[n]} = f( 0 ) \bone_{n \times n} + c A_{[n]} \in \overline{\cS_n^{(k)}}$
for all sufficiently large $n$ if $A \in \overline{\sP}_k$ , by Lemma~\ref{LWeyl}.
In both cases we see that $f[-] : \overline{\sP}_k \to \overline{\sP}_k$.
In turn, this condition implies that $f[-] : \sP_k^\fin \to \overline{\sP}_k$.
We now claim that if $f[-] : \sP_k^\fin \to \overline{\sP}_k$,
then $f$ is a real constant or linear of the above form.
Similarly to the previous part, this follows from
Theorem~\ref{Tstrongest}, given the following claim:
\[
f[-] : \sP_k^\fin \to \overline{\sP}_k \quad \text{if and only if} \quad
f[-] : \cS_n^{(k)} \to \overline{\cS_n^{(k)}}
\textrm{ for all } n \geq k.
\]
The proof of this equivalence follows the same lines as that in the
previous part, so the details are left to the interested reader.
\qedhere
\end{enumerate}
\end{proof}

\section{Multi-variable transforms with negativity constraints}\label{Smulti}

We now turn to the analysis of functions of several variables, acting on
tuples of matrices with prescribed negativity. In the present
section, we focus on functions of the form $f : I^m \to \R$,
where $I = (-\rho,\rho)$ for some $0 < \rho \leq \infty$.
The next section will address the cases where $I = ( 0, \rho )$ and
$I = [ 0, \rho )$. We will then turn to the complex setting in
Section~\ref{Scomplex}.

Recall that if $B^{(p)} = ( b^{(p)}_{i j} )$ is an $n \times n$ matrix
with entries in $I$ for all $p \in [ 1 : m ]$ then the function $f$ acts
entrywise to produce the
$n \times n$ matrix $f[ B^{(1)}, \ldots, B^{(m)} ]$ with $( i, j )$ entry
\[
f[ B^{(1)}, \ldots, B^{(m)} ]_{i j} = f(b^{(1)}_{i j}, \ldots, b^{(m)}_{i j}) %
\qquad \textrm{for all } i, j  \in [ 1 : n ].
\]

The negativity constraints on the domain are described by an $m$-tuple of
non-negative integers $\bk = ( k_1, \ldots, k_m )$. As noted in the
introduction, we may permute the entries of $\bk$  so that any zero
entries appear first: there exists
$m_0 \in [ 0 : m ]$ with $k_p = 0$ for $p \in [ 1 : m_0 ]$
and $k_p \geq 1$ for $p \in [ m_0 + 1 : m ]$.
In this case, the $m$-tuple $\bk$ is said to be admissible. We
let $k_{\max} := \max\{ k_p : p \in [ 1 : m ] \}$,
\[
\cS_n^{(\bk)}( I ) := \cS_n^{(k_1)}( I ) \times \cdots \times \cS_n^{(k_m)}( I ), %
\quad \text{and} \quad \overline{\cS_n^{(\bk)}}( I ) := %
\overline{\cS_n^{(k_1)}}( I ) \times \cdots \times \overline{\cS_n^{(k_m)}}( I ).
\]
The simplest generalization of the one-variable preserver problem would
involve taking $k_1 = \cdots = k_m = l$, but the more general problem
is more interesting (and also more challenging).
We now recall our generalization of Theorem~\ref{Tstrongest},
but break it into two parts for ease of exposition while proving it.

\begin{theorem}[The cases $\bk = \bzero$ and $l=0$]\label{Tto0}
Let $I = (-\rho,\rho)$, $(0,\rho)$ or $[0,\rho)$,
where $0 < \rho \leq \infty$, and let $\bk \in \Z_+^m$ be admissible.
Given a function $f : I^m \to \R$, the following are equivalent.
\begin{enumerate}
\item The entrywise transform $f[-]$ sends $\overline{\cS_n^{(\bk)}}(I)$ to
$\overline{\cS_n^{(0)}}$ for all $n \geq k_{\max}$.

\item The entrywise transform $f[-]$ sends $\cS_n^{(\bk)}(I)$ to $\cS_n^{(0)}$
for all $n \geq k_{\max}$.

\item The function $f$ is independent of $x_{m_0 + 1}$, $\ldots,$ $x_m$ and
is represented on $I^m$ by a convergent power series in the reduced tuple
$\bx' := ( x_1, \ldots, x_{m_0} )$, with all Maclaurin coefficients
non-negative.
\end{enumerate}
If, instead, $\bk = \bzero$ and $l \geq 1$ then $f[-]$ sends
$\cS_n^{(\bzero)}(I)$ to $\overline{\cS_n^{(l)}}$ if and only if the function
$f$ is  represented on~$I^m$ by a power series
$\sum_{\balpha \in \Z_+^m} c_\balpha \bx^\balpha$
with $c_\balpha \geq 0$ for all $\balpha \in \Z_+^m \setminus \{ \bzero \}$.
\end{theorem}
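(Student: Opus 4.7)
The plan is to establish the equivalence $(1) \Longleftrightarrow (2) \Longleftrightarrow (3)$ together with the additional claim when $\bk = \bzero$ and $l \geq 1$, focusing on the two-sided case $I = (-\rho,\rho)$ (the one-sided versions are deferred to Section~\ref{S1sided}). The implication $(1) \Rightarrow (2)$ is immediate from the inclusion $\bs_n^{(\bk)}(I) \subseteq \overline{\bs_n^{(\bk)}}(I)$. For $(3) \Rightarrow (1)$, a function depending only on the first $m_0$ variables through an absolutely monotone power series in $\bx'$ maps $\overline{\bs_n^{(\bk)}}(I)$ into $\bs_n^{(0)}$ by the multi-variable Schoenberg Theorem~\ref{T000} applied to the positive semidefinite factors indexed by $q \leq m_0$.

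The substance is $(2) \Rightarrow (3)$, which I would split into two stages. \textbf{Stage~1: $f$ is absolutely monotone on $I^m$.} Given a positive semidefinite tuple $(A^{(1)}, \ldots, A^{(m)})$ of size~$n$ with entries in $I$, inflate by a diagonal block of size $k_{\max}$: set $B^{(q)} := A^{(q)} \oplus \bzero_{k_{\max}}$ if $q \leq m_0$, and $B^{(q)} := A^{(q)} \oplus (-\eps \Id_{k_q}) \oplus \bzero_{k_{\max} - k_q}$ if $q > m_0$, for a small positive~$\eps$. Each $B^{(q)}$ has exactly $k_q$ negative eigenvalues, so~$(2)$ forces $f[B^{(1)}, \ldots, B^{(m)}]$ to be positive semidefinite; its top-left $n \times n$ principal submatrix equals $f[A^{(1)}, \ldots, A^{(m)}]$, which is therefore positive semidefinite as well. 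Theorem~\ref{T000} now yields $f(\bx) = \sum_\balpha c_\balpha \bx^\balpha$ on $I^m$ with every $c_\balpha \geq 0$.

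\textbf{Stage~2: $f$ is independent of $x_{m_0+1}, \ldots, x_m$.} Fix $p \in [m_0+1 : m]$, a base point $\bx^* \in [0,\rho)^{m-1}$ for the remaining coordinates, and an arbitrary $B \in \bs_n^{(k_p)}(I)$. Form the test tuple of size $n + k_{\max}$ by $B^{(p)} := B \oplus \bzero_{k_{\max}}$ and, for $q \neq p$, $B^{(q)} := x_q^* \bone_n \bone_n^T \oplus N_q$, where $N_q := \bzero_{k_{\max}}$ if $q \leq m_0$ and $N_q := -\eps \Id_{k_q} \oplus \bzero_{k_{\max} - k_q}$ otherwise. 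Since every $x_q^* \geq 0$, each $B^{(q)}$ has exactly $k_q$ negative eigenvalues, so~$(2)$ applies. The top-left $n \times n$ block of $f[B^{(1)}, \ldots, B^{(m)}]$ equals $g[B]$, where $g(x) := f(x_1^*, \ldots, x_{p-1}^*, x, x_{p+1}^*, \ldots, x_m^*)$, so $g[-]$ sends $\bs_n^{(k_p)}(I)$ into $\bs_n^{(0)}$. The penultimate clause of Theorem~\ref{Tstrongest} then forces $g$ to be a non-negative constant, so $\partial_p f$ vanishes on $[0,\rho)^{m-1} \times I$, and real analyticity from Stage~1 propagates this to all of $I^m$. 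The main obstacle is to arrange the exact negativity count $k_q$ on every factor simultaneously while reducing the multi-variable problem to the single-variable Theorem~\ref{Tstrongest}; the block-diagonal padding is designed precisely to decouple these requirements.

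For the additional claim ($\bk = \bzero$, $l \geq 1$), the forward direction employs the replication trick $D^{(q)} := (A^{(q)})^{\oplus(l+2)}$. With $C := f[A^{(1)}, \ldots, A^{(m)}]$ and $c_\bzero := f(\bzero)$, one computes
\[
f[D^{(1)}, \ldots, D^{(m)}] = \Id_{l+2} \otimes (C - c_\bzero \bone_n \bone_n^T) + c_\bzero (\bone_{l+2} \otimes \bone_n)(\bone_{l+2} \otimes \bone_n)^T.
\]
If $C - c_\bzero \bone_n \bone_n^T$ had even one negative eigenvalue, the first summand would carry at least $l+2$ of them and the rank-one correction could cancel only one, contradicting membership in $\overline{\bs^{(l)}}$. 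Hence $f - f(\bzero)$ sends $\bs_n^{(\bzero)}(I)$ into $\bs_n^{(0)}$, and Theorem~\ref{T000} yields the required power-series representation with non-negative Maclaurin coefficients for all $\balpha \neq \bzero$. The converse combines the absolute monotonicity of $f - f(\bzero)$ with Lemma~\ref{LWeyl}.
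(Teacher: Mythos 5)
Your proof is correct and takes a genuinely different route from the paper's in the main implication $(2) \Rightarrow (3)$. In Stage~1 you exploit the fact that $l = 0$ gives a positive-semidefinite conclusion outright: padding the tuple with small negative blocks places $\bB$ directly in $\bs^{(\bk)}(I)$, so $f[\bA]$ is PSD and the full absolute monotonicity of $f$ (including $f(\bzero) \geq 0$) follows at once from Theorem~\ref{T000}. The paper instead routes through Proposition~\ref{Pabsmon}(1), which for general $l$ only yields that $f - f(\bzero)$ is absolutely monotone, and then runs a separate test-matrix argument to deduce $f(\bzero) \geq 0$. In Stage~2 you freeze the remaining coordinates at a base point in $[0,\rho)^{m-1}$, pad, and invoke the penultimate clause of Theorem~\ref{Tstrongest} to force the one-variable slice $g$ to be a non-negative constant, then spread the vanishing of $\partial_{x_p} f$ by real analyticity; the paper instead obtains a contradiction from the leading $2 \times 2$ minor of $f[\bB]$ using an explicit non-constant $2 \times 2$ block together with the inflation operator $\up$ of Definition~\ref{Dinflation}. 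Your reduction is more modular, reusing the single-variable machinery directly rather than building bespoke test matrices. Your treatment of the supplementary claim ($\bk = \bzero$, $l \geq 1$) essentially re-derives the needed instance of Proposition~\ref{Pabsmon}(1) via the replication trick; one small imprecision is the phrase ``the rank-one correction could cancel only one,'' which tacitly reads $c_\bzero \geq 0$ --- the Weyl argument (Lemma~\ref{LWeyl}) gives the contradiction for either sign, since a rank-one perturbation of any sign changes the negative index by at most one.
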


The case of entrywise positivity preservers corresponds to
taking $\bk = \bzero$ and $l = 0$. The analogue of Schoenberg's theorem holds,
as shown by FitzGerald, Micchelli, and Pinkus in \cite{fitzgerald}.
Before that, Vasudeva \cite{vasudeva79} had obtained the one-sided,
one-variable version of Schoenberg's theorem, which corresponds to taking
$m = 1$ and $I = ( 0, \infty )$. These results were extended to various smaller domains in
our previous work: see Theorem~\ref{T000} above, the proof of which employs
the multi-variable version of Bernstein's theorem on absolutely
monotone functions. This theorem was obtained by Bernstein \cite{Bernstein} for
$m = 1$, and then by Schoenberg~\cite{Schoenberg33} for $m = 2$. For $m > 2$,
see Appendix~\ref{Appendix}.

Schoenberg's theorem and these generalizations yield a large collection
of preservers. Theorem~\ref{Tto0} shows that a similarly rich class of
preservers is obtained even if one relaxes some (but not all) of the
negativity constraints, from none to some, provided that the codomain is
still required to be $\cS_n^{(0)}$.

We now complete the classification of the multi-variable
transforms for the remaining case of Theorem~\ref{Tmain}.
The class of functions obtained is given by a combination of the rich class
of absolutely monotone functions in the variables for which
the corresponding entries of~$\bk$ are zero
and the rigid class of positive homotheties in a subset of the remaining
variables.

\begin{theorem}[The case of $\bk \neq \bzero$ and $l > 0$]\label{Tmulti2}
Let $I := ( -\rho, \rho )$, $( 0, \rho )$ or $[ 0, \rho )$,
where $0 < \rho \leq \infty$,
let $\bk$ be admissible and not equal to~$\bzero$, and suppose
$l$ is a positive integer.
Given a function $f : I^m \to \R$, the following are equivalent.
\begin{enumerate}
\item The entrywise transform $f[ - ]$ sends
$\overline{\cS_n^{(\bk)}}(I)$ to $\overline{\cS_n^{(l)}}$
for all $n \geq k_{\max}$.

\item The entrywise transform $f[ - ]$ sends $\cS_n^{(\bk)}(I)$ to
$\overline{\cS_n^{(l)}}$ for all $n \geq k_{\max}$.

\item There exists  a function $F : (-\rho,\rho)^{m_0} \to \R$ and a
non-negative constant $c_p$ for each $p \in [ m_0 + 1 : m ]$ such that
\begin{enumerate}
\item we have the representation
\begin{equation}\label{Epresrep}
f( \bx ) = F( x_1, \ldots, x_{m_0} ) + \sum_{p = m_0 + 1}^m c_p x_p
\qquad \textrm{for all } \bx \in I^m,
\end{equation}
\item the function
$\bx' := ( x_1, \ldots, x_{m_0} ) \mapsto F( \bx' ) - F(
\bzero_{m_0} )$ is absolutely monotone, that is, it is represented
on $I^{m_0}$ by a convergent power series with all Maclaurin coefficients
non-negative, and
\item we have the inequality
\[
\bone_{F( \bzero ) < 0} + \sum_{p : c_p > 0} k_p \leq l.
\]%
\end{enumerate}
\end{enumerate}
\end{theorem}

In the above, we adopt the convention that if $m_0 = 0$ then
$F( x_1, \ldots, x_{m_0} )$ is the constant~$F( \bzero )$
and (3)(b) is vacuously satisfied.

\begin{remark}\label{Rrescale}
As asserted in \cite[Remark~9.16]{BGKP-hankel} for the $\bk = \bzero$
case, there is no greater generality in considering, for
Theorems~\ref{Tto0} and~\ref{Tmulti2}, domains of the form
\[
( -\rho_1, \rho_1 ) \times \cdots \times ( -\rho_m, \rho_m ), \qquad
\textrm{where } 0 < \rho_1, \ldots, \rho_m \leq \infty.
\]
For finite $\rho_p$, one can introduce the scaling
$x_p \mapsto x_p / \rho_p$, whereas for infinite $\rho_p$, one can
truncate to $( -N, N )$, scale and then use the identity theorem to
facilitate the extension to $( -\infty, \infty )$.
\end{remark}

The simplest multi-variable generalization mentioned above,
$k_1 = \cdots = k_m = l$, is a straightforward consequence of
Theorem~\ref{Tmulti2}. The classification obtained is rigid and very
close to what is seen in Theorems~\ref{Tpreserver} and~\ref{Tatmostk}.

\begin{corollary}\label{Cpreserver}
Let $I := ( -\rho, \rho )$, $( 0, \rho )$ or $[ 0, \rho )$,
where $0 < \rho \leq \infty$, let $k$ and $m$ be positive integers,
and let $f : I^m \to \R$.
\begin{enumerate}
\item The entrywise transform $f[-]$ sends $\cS_n^{(k \bone_m^T)}(I)$
to $\cS_n^{(k)}$ for all $n \geq k$ if and only if $f( \bx ) = c x_{p_0}$
for a constant $c > 0$ and some $p_0 \in [ 1 : m ]$, or,
when $k = 1$, we may also have $f( \bx ) \equiv -c$ for some $c > 0$.

\item The entrywise transform $f[-]$ sends
$\overline{\cS_n^{(k \bone_m^T)}}(I)$ to
$\overline{\cS_n^{(k)}}$ for all $n \geq k$
if and only if $f( \bx ) = c x_{p_0} + d$ for some $p_0 \in [ 1: m ]$,
with either $c = 0$ and $d \in \R$, or $c > 0$ and $d \geq 0$.
\end{enumerate}
\end{corollary}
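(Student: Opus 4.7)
The plan is to derive both statements from Theorem~\ref{Tmulti2} and then sharpen part~(1) by reduction to the one-variable preserver problem. First I would verify that Theorem~\ref{Tmulti2} applies with $\bk = k \bone_m^T$ and $l = k$: since every $k_p = k \geq 1$, the tuple is admissible with $m_0 = 0$ and $K = k$, and the constraint on $l$ reduces to $l = 1 = k$ when $k = 1$ or $l = k \in [1 : 2k-2]$ when $k \geq 2$. With $m_0 = 0$, the function $F$ in Theorem~\ref{Tmulti2}(2) is simply the constant $d := F(\bzero)$, condition~(2)(b) is vacuous, and condition~(2)(d) forces $d \geq 0$ whenever $c > 0$. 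Thus in both parts the same representation
\[
f(\bx) = d + c x_{p_0}
\]
emerges, with $p_0 \in [1 : m]$, $c \geq 0$, and $d \geq 0$ whenever $c > 0$.

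For part~(2), this representation is already the claim. The converse direction follows from Lemma~\ref{LWeyl}: if $c = 0$ the image $f[B^{(1)}, \ldots, B^{(m)}] = d \bone_n \bone_n^T$ has rank at most one, while if $c > 0$ and $d \geq 0$ the image $d \bone_n \bone_n^T + c B^{(p_0)}$ is obtained from $c B^{(p_0)}$ by a positive semidefinite rank-one update, which can only decrease the number of negative eigenvalues, so it lies in $\overline{\bs_n^{(k)}}$ provided $B^{(p_0)}$ does.

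For part~(1) the stronger ``exactly $k$'' constraint on the codomain must rule out $d > 0$ in the case $c > 0$ and force $k = 1$, $d < 0$ in the case $c = 0$. I would pass to the diagonal slice $g(x) := f(x, x, \ldots, x) = d + c x$ and note that, since $(A, \ldots, A) \in \bs_n^{(\bk)}(I)$ whenever $A \in \bs_n^{(k)}(I)$, the hypothesis yields $g[-] : \bs_n^{(k)}(I) \to \bs_n^{(k)}$ for all $n \geq k$. When $I = (-\rho, \rho)$ this is precisely the hypothesis of Theorem~\ref{Tpreserver}, which identifies $g$ as either a positive homothety $g(x) = \hat c x$ with $\hat c > 0$ (giving $d = 0$ and $f(\bx) = \hat c x_{p_0}$) or, when $k = 1$, a negative constant $g \equiv -\hat c$ with $\hat c > 0$ (giving $c = 0$ and $f(\bx) \equiv -\hat c$). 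The converse is immediate, as scaling by a positive constant preserves inertia and $-\hat c \bone_n \bone_n^T$ has exactly one negative eigenvalue.

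The main obstacle is extending this one-variable reduction to the one-sided domains $I = (0, \rho)$ or $[0, \rho)$, where Theorem~\ref{Tpreserver} as stated is unavailable. Here one cannot reuse the counterexample matrix $-\epsilon \Id_k$ from (\ref{Etemp}) because its diagonal entries are negative, and the symmetric non-negative test matrices from~(\ref{Ectrex}) have $\bone_n$ as a positive eigenvector, so a rank-one update $d \bone_n \bone_n^T$ with $d > 0$ only shifts the positive spectrum and leaves the negative inertia unchanged. To break this Perron--Frobenius symmetry, I would use an asymmetric block-diagonal test: the matrix $\epsilon \bigl(\begin{smallmatrix} 4 & 1 \\ 1 & 0 \end{smallmatrix}\bigr)^{\oplus k}$ lies in $\bs_{2k}^{(k)}(I)$ for all sufficiently small $\epsilon > 0$ and places $\bone_{2k}$ with nontrivial projection on the negative eigenspace of $A$. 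A secular-equation analysis for rank-one updates of $A$ then shows that, once $\epsilon$ is taken small enough in terms of $d/c$, the rank-one update $d\bone_{2k}\bone_{2k}^T + c A$ strictly reduces the negative inertia, yielding the required contradiction and completing part~(1) in the one-sided case.
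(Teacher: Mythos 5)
Your overall strategy is correct and closely parallels the paper's: apply Theorem~\ref{Tmulti2} with $m_0 = 0$ to obtain the form $f(\bx) = d + c x_{p_0}$ (with $d \geq 0$ whenever $c > 0$), dispose of the converse implications via Lemma~\ref{LWeyl}, and then use a test-matrix argument to force $d = 0$ in part~(1) when $c > 0$. Your ``diagonal-slice'' reduction $g(x) := f(x,\ldots,x) = d + cx$ is a clean rephrasing: the paper's own argument for $I = (-\rho,\rho)$ also applies $f$ to the $m$-tuple $(-\epsilon\Id_k,\ldots,-\epsilon\Id_k)$, which is precisely a diagonal slice, and your invocation of Theorem~\ref{Tpreserver} at that point is sound.

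The gap is in the one-sided case $I = (0,\rho)$, and it is real. Your proposed test matrix $\epsilon\bigl(\begin{smallmatrix} 4 & 1 \\ 1 & 0 \end{smallmatrix}\bigr)^{\oplus k}$ does \emph{not} lie in $\bs_{2k}^{(k)}\bigl((0,\rho)\bigr)$: the $(2,2)$-entry of each block is exactly $0$, and the off-block entries of the direct sum are also $0$. Both of these must be perturbed away (by adding $\eta\bone_{2k\times 2k}$ or similar) before the hypothesis can even be applied, and that perturbation changes the rank-one update to $(d + c\eta)\bone_{2k\times 2k} + c\epsilon A^{\oplus k}$; one must then show the negative inertia still drops, uniformly in $\eta$ small. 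Beyond this, the secular-equation analysis is only sketched. The crux -- that the root of the secular equation lying between the block eigenvalues $\epsilon(2\pm\sqrt 5)$ has a strictly positive limit as the rank-one strength $\to \infty$ -- amounts to verifying
\[
\frac{c_-^2(2+\sqrt 5) + c_+^2(2-\sqrt 5)}{c_-^2 + c_+^2} > 0, \qquad
\textrm{where } \frac{c_-^2}{c_+^2} = \frac{5-\sqrt 5}{5+\sqrt 5},
\]
and this inequality does hold, but you assert rather than prove it, and you do not show it survives the perturbation $\eta > 0$.

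The paper sidesteps all of these issues with Lemma~\ref{Ljudicious2}: instead of a $2\times 2$ block it uses the $3\times 3$ block $A = \bigl(\begin{smallmatrix} 4 & 2 & 3 \\ 2 & 1 & 2 \\ 3 & 2 & 4 \end{smallmatrix}\bigr)$, all of whose entries are already strictly positive. The matrix-determinant lemma and the identity for $\adj(B \oplus C)$ give an explicit factorisation of $\det\bigl(x\Id_{3k} - A^{\oplus k} - t\bone_{3k\times 3k}\bigr)$, from which one reads off directly that $A^{\oplus k} + t\bone_{3k\times 3k} \in \bs_{3k}^{(k-1)}$ for all $t > 1$. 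The remaining zero off-block entries are removed by the $+\epsilon\bone$ perturbation together with continuity of the spectrum, and choosing $\delta \in (0, d/c)$ puts the effective strength $d/(c\delta)$ above the threshold $1$, so Lemma~\ref{LWeyl} finishes the job. That explicit computation (and the extra care with the perturbation) is precisely what your sketch is missing. Finally, for $I = [0,\rho)$ the paper simply restricts to $(0,\rho)$, applies the open case, and extends by the continuity provided by Proposition~\ref{Pabsmon}(1); you should indicate which route you intend for that domain.
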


It is not clear how to define inertia preservers when there are multiple
matrices to which an entrywise transform is applied. Hence we provide no
version of Theorem~\ref{Tinertia} in Corollary~\ref{Cpreserver}. However,
when we work over $(0,\rho)$ or $[0,\rho)$ in the next section, we will
give a proof of the analogue of Theorem~\ref{Tinertia} for these domains
in the $m=1$ case.

\begin{proof}
We first prove~(1). The reverse implication is readily verified; for the
forward assertion, apply Theorem~\ref{Tmulti2},
noting that $m_0 = 0$, to obtain that either
$f( \bx ) \equiv F( \bzero )$ or $f( \bx ) = F( \bzero ) + c x_{p_0}$ with
$F( \bzero ) \geq 0$ and $c > 0$.
Now $f \equiv F( \bzero )$ cannot take $\cS_n^{(k \bone_m^T)}(I)$ to
$\cS_n^{(k)}$ for any real $F( \bzero )$ if $k \geq 2$, and for
non-negative $F( \bzero )$ if $k = 1$. We next assume that
$f( \bx ) = F( \bzero ) + c x_{p_0}$; to complete the proof, we need to show
that $F( \bzero ) = 0$. Taking $B^{(1)} = \cdots = B^{(m)} = -\eps \Id_k$ for
$\eps \in ( 0, \rho )$, the working around~(\ref{Etemp}) gives
that~$F( \bzero ) = 0$.

Coming to~(2), the reverse implication is trivial for $c = 0$, and follows
from Lemma~\ref{LWeyl} if $c > 0$ and $d \geq 0$. The forward
implication follows immediately from Theorem~\ref{Tmulti2}.
\end{proof}

\subsection{The proofs}

The remainder of this section is devoted to establishing Theorems~\ref{Tto0}
and~\ref{Tmulti2} for the case $I = (-\rho,\rho)$.
In addition to the replication trick and multi-variable analogues of
various lemmas used above, a key ingredient is Lemma~\ref{Linertia}.
This result is indispensable for producing tuples of test matrices of the
same large dimension while preserving the number of negative eigenvalues.

The next proposition provides suitable
multi-variable versions of Steps~1 and~2 from the proof of Theorem~\ref{Tstrongest}.
In order to state it clearly, we introduce some notation.

\begin{notation}
Given $n \times n$ test matrices  $A^{(1)}$, $\ldots,$ $A^{(m)}$
and constants $\epsilon_1$, $\ldots,$ $\epsilon_m$, we let the $m$-tuples
$\bA := ( A^{(1)}, \ldots, A^{(m)} )$
and
\begin{equation}
\bA + \bepsilon \bone := %
( A^{(1)} + \epsilon_1 \bone_{n \times n}, \ldots, A^{(m)} + \epsilon_m \bone_{n \times n} ),
\end{equation}
and similarly for $\bB$ and $\bB + \bepsilon \bone$.

If $P = \{ p_1 < \cdots < p_k \}$ is a non-empty subset of $[ 1 : m ]$
then $\bA_P$ and $\bA_P + \bepsilon_P \bone$ denote the corresponding
$k$-tuples with entries whose indices appear in $P$, so that
$\bA_P = ( A^{(p_1)}, \ldots, A^{(p_k)} )$ and similarly for
$\bA_P + \bepsilon_P \bone$.

If $\balpha \in \Z_+^m$ then $\bA^{\circ \balpha}$ denotes the matrix
$( A^{(1)} )^{\circ \alpha_1} \circ \cdots \circ ( A^{(m)} )^{\circ \alpha_m}$,
where $\circ$ denotes the Schur product, so that $\bA^{\circ \balpha}$
has $( i, j )$ entry
\[
\bA^{\circ \balpha}_{i j} = ( A^{(1)}_{i j} )^{\alpha_1} \cdots ( A^{(m)}_{i j} )^{\alpha_m} %
\qquad \textrm{for all } i, j \in [ 1 : n ].
\]
A function $f$ with power-series representation
$f( \bx ) = \sum_{\balpha} c_\balpha \bx^\balpha$
acts on the $m$-tuple $\bA$ to give the $n \times n$ matrix
$f[ \bA ] = \sum_\balpha c_\balpha A^{\circ \balpha}$.
\end{notation}

The following result and its corollary are not required when considering
functions of a single variable but they are crucial to our arguments in
the multivariable setting.

\begin{proposition}\label{Pabsmon}
Let $I := (-\rho,\rho)$, $(0,\rho)$ or $[0,\rho)$, where
$0 < \rho \leq \infty$.
Suppose  $f : I^m \to \R$ is such that the entrywise transform $f[-]$
sends $\cS_n^{(\bk)}( I )$ to $\overline{\cS_n^{(l)}}$ for all
$n \geq k_{\max}$, where $\bk \in \Z_+^m$
and $l$ is a non-negative integer.
\begin{enumerate}
\item The function $f$ is represented on $I^m$ by a power series
$\sum_{\balpha \in \Z_+^m} c_\balpha \bx^\balpha$, where the coefficient
$c_\balpha \geq 0$ for all $\balpha \in \Z_+^m \setminus \{ \bzero \}$.

\item Suppose the matrices $A^{(1)}$, $\ldots,$ $A^{(m)}$,
$B^{(1)}$, $\ldots,$ $B^{(m)} \in \cS_n^{(0)}$
are such that $A^{(p)} - B^{(p)} \in \cS_n^{(k_p)}$ for all $p \in [ 1 : m ]$.
Fix $\epsilon_1$, $\ldots,$ $\epsilon_m \geq 0$ such that the entries of
$A^{(p)} + \epsilon_p \bone_{n \times n}$ and $B^{(p)} + \epsilon_p \bone_{n \times n}$
all lie in $I$ for every $p \in [ 1 : m ]$. Then
\[
f[ \bB + \bepsilon \bone ] - f[ \bA + \bepsilon \bone ]
\]
is a positive semidefinite matrix with rank at most $l$.
\end{enumerate}
\end{proposition}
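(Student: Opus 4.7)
The plan is to imitate the proof of Theorem~\ref{Tstrongest} in the multi-variable setting, using the replication trick (Step~1) for part~(1) and the block-matrix trick of Lemma~\ref{Lblock} (Step~2) for part~(2). Throughout, I would set $g(\bx) := f(\bx) - f(\bzero)$, so that $g(\bzero) = 0$.

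For part~(1), the goal is to show that $g[\bB]$ is positive semidefinite for every $m$-tuple $\bB$ of $n \times n$ PSD matrices with entries in $I$; Theorem~\ref{T000} then forces $g$ to be a convergent power series on $I^m$ with non-negative Maclaurin coefficients, giving $f$ the form claimed in~(1). Fix such a $\bB$, set $N := k_{\max} + (l+2) n$, and for each $p \in [1:m]$ pick $C^{(p)} \in \bs_{k_{\max}}^{(k_p)}(I)$ (e.g., $C^{(p)} := \bzero$ when $k_p = 0$; over $(-\rho, \rho)$, $C^{(p)} := -\epsilon_p \Id_{k_p} \oplus \bzero$ with small enough $\epsilon_p > 0$ otherwise, with suitable modifications in the one-sided cases). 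Let $D^{(p)} := C^{(p)} \oplus (B^{(p)})^{\oplus (l+2)}$; then $\bD \in \bs_N^{(\bk)}(I)$, so $f[\bD] \in \overline{\bs_N^{(l)}}$. Since $g$ vanishes at $\bzero$, the matrix $g[\bD] = f[\bD] - f(\bzero) \bone_{N \times N}$ is block diagonal, with $l+2$ identical copies of $g[\bB]$ among its blocks, and Lemma~\ref{LWeyl} applied to the rank-one shift bounds its total negative-eigenvalue count by $l+1$; since each copy of $g[\bB]$ contributes its full negative-eigenvalue count, this forces $g[\bB]$ to have none.

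For part~(2), form the $2n \times 2n$ block matrices $C^{(p)} := \begin{bmatrix} A^{(p)} + \epsilon_p \bone & B^{(p)} + \epsilon_p \bone \\ B^{(p)} + \epsilon_p \bone & A^{(p)} + \epsilon_p \bone \end{bmatrix}$. Since $A^{(p)} + \epsilon_p \bone$ and $B^{(p)} + \epsilon_p \bone$ are PSD with entries in $I$, and their difference equals $A^{(p)} - B^{(p)} \in \bs_n^{(k_p)}$, Lemma~\ref{Lblock}(1) yields $C^{(p)} \in \bs_{2n}^{(k_p)}(I)$ and hence $f[\bC] \in \overline{\bs_{2n}^{(l)}}$. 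The function $h(\bx) := f(\bx) + |f(\bzero)|$ is absolutely monotone by part~(1), so $h[\bC] = f[\bC] + |f(\bzero)| \bone_{2n \times 2n}$ still has at most $l$ negative eigenvalues (Lemma~\ref{LWeyl}). The orthogonal block-diagonalisation~(\ref{Eblock}) then gives $J^T h[\bC] J = (h[\bA + \bepsilon \bone] + h[\bB + \bepsilon \bone]) \oplus (h[\bA + \bepsilon \bone] - h[\bB + \bepsilon \bone])$, with the first summand PSD because $h$ preserves PSD. All at-most-$l$ negative eigenvalues of $J^T h[\bC] J$ are therefore concentrated in the second block, and since the additive constant cancels in the difference, $f[\bB + \bepsilon \bone] - f[\bA + \bepsilon \bone]$ has at most $l$ eigenvalues of the relevant sign. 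Positive-semidefiniteness of this difference is then extracted by writing it as $g[\bB + \bepsilon \bone] - g[\bA + \bepsilon \bone]$ and combining the non-negativity of the Maclaurin coefficients of $g$ from part~(1) with the prescribed inertia of each $A^{(p)} - B^{(p)}$ via telescoping Schur-product identities of the form $\prod_p X_p - \prod_p Y_p = \sum_p Y_1 \circ \cdots \circ (X_p - Y_p) \circ \cdots \circ X_m$; the two together deliver rank $\leq l$.

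I expect the main obstacle to be precisely this last step: coordinating sign control with rank control in part~(2). The block-matrix argument cleanly yields the rank bound through Lemma~\ref{Lblock} and the absolute monotonicity of $h$, but positive-semidefiniteness demands a delicate Schur-product expansion that interleaves the absolutely monotone structure of $g$ with the negative-inertia constraint on each $A^{(p)} - B^{(p)}$. A secondary technical point is adapting the test matrices $C^{(p)}$ in part~(1) to the one-sided domains $(0, \rho)$ and $[0, \rho)$, where the natural choice $-\epsilon_p \Id_{k_p}$ lies outside the allowed entry range and must be replaced by a positive-entry test block carrying exactly $k_p$ negative eigenvalues.
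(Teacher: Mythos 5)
For the two-sided domain $I = (-\rho,\rho)$, your proposal follows the paper's proof in essentially every detail: part (1) via the replication trick with block-diagonal test tuples, Lemma~\ref{LWeyl} to absorb the rank-one shift by $f(\bzero)\bone$, and Theorem~\ref{T000}; part (2) via the doubling construction of Lemma~\ref{Lblock}, the shift to $h = f + |f(\bzero)|$, and a Loewner-monotonicity argument (your telescoping Schur-product identity is precisely the mechanism behind ``absolutely monotone implies Loewner monotone'') combined with the at-most-$l$ count of negative eigenvalues coming from the block diagonalization.

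The genuine gap is the one-sided case $I = (0,\rho)$ of part (1), which you dismiss as ``a secondary technical point'' about choosing test matrices with positive entries. Choosing the test block is the easy part (the paper uses $M_{k+1}(a,b)$ from Lemma~\ref{Lprelim2} together with the inflation map of Lemma~\ref{Linertia}); the real difficulty is that over $(0,\rho)$ the replication trick only shows that the shifted function $g_\epsilon(\bx) := f(\bx + \epsilon\bone_m) - f(\epsilon\bone_m)$ is absolutely monotone on the shrinking domain $(0,\rho-\epsilon)^m$ for each $\epsilon>0$. Assembling these local power series into a single convergent representation on $(0,\rho)^m$ is not automatic: the paper proves $f$ is smooth on $(0,\rho)^m$ with $\partial^{\balpha} f \geq 0$ for all $\balpha \neq \bzero$, applies the multivariable Bernstein-type Theorem~\ref{Tabsmon} (proved in the Appendix) to each $\partial_{x_p} f$, and then integrates and patches the resulting coefficients. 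None of this is in your sketch, so the one-sided half of part (1) is essentially missing. Two smaller imprecisions in part (2): the block argument does not on its own ``cleanly yield the rank bound'' but only an upper bound $l$ on the number of negative eigenvalues of $h[\bA + \bepsilon\bone] - h[\bB + \bepsilon\bone]$; the rank bound appears only after combining that count with positive-semidefiniteness of the reversed difference. And that positive-semidefiniteness comes not from ``the prescribed inertia of $A^{(p)} - B^{(p)}$'' but from the Loewner ordering $B^{(p)} + \epsilon_p\bone \geq A^{(p)} + \epsilon_p\bone$, which is what the Schur-product/Loewner-monotonicity argument actually requires and uses.
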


\begin{proof}[Proof of Proposition~\ref{Pabsmon} for $I = ( -\rho, \rho )$]\hfill
\begin{enumerate}
\item Define $g$ on $I^m$ by setting $g( \bx ) = f( \bx ) - f( \bzero )$.
Given a constant $\eps \in ( 0, \rho )$ and matrices
$A_1$, $\ldots,$ $A_m \in \cS_n^{(0)}(I)$,
let $N := k_{\max} + ( l + 2 ) n$ and
\begin{equation}\label{Emultitest1}
B^{(p)} := -\eps \Id_{k_p} \oplus %
\bzero_{ ( k_{\max} - k_p ) \times ( k_{\max} - k_p )} \oplus %
A_p^{\oplus ( l + 2 )} \in \cS_N^{(k_p)}( I )
\end{equation}
for all $p \in [ 1 : m ]$. By hypothesis and Lemma~\ref{LWeyl},
the entrywise transform $g[-]$ sends $\cS_N^{(\bk)}( I )$ to
$\overline{\cS_N^{( l + 1 )}}$, so $g[ \bB ] \in \overline{\cS_N^{(l + 1)}}$.
As the block-diagonal matrix~$g[ \bB ]$ contains $l + 2$ copies of
the matrix $g[ \bA ]$ on the diagonal, this last matrix must be
positive semidefinite. It now follows from
Theorem~\ref{T000} that $g$ is absolutely monotone.

\item It follows from the previous part that the function
$h : \bx \mapsto f( \bx ) + |f( \bzero )|$ is absolutely monotone,
so $h[ \bA + \bepsilon \bone ]$ and $h[ \bB + \bepsilon \bone ]$
are positive semidefinite.

By Lemma~\ref{LWeyl}, $h[-]$ sends $\cS_n^{(\bk)}(I)$ to
$\overline{\cS_n^{(l)}}$ for all $n \geq k_{\max}$. Using~(\ref{Eblock}),
we see that
\[
C^{(p)} := \begin{bmatrix}
 A^{(p)} + \epsilon_p \bone_{n \times n} & B^{(p)} + \epsilon_p \bone_{n \times n} \\
 B^{(p)} + \epsilon_p \bone_{n \times n} & A^{(p)} + \epsilon_p \bone_{n \times n}
 \end{bmatrix} \in \cS_{2 n}^{(k_p)}( I )
\]
for all $p \in [ 1 : m ]$, so $h[ \bC ] \in \overline{\cS_{2 n}^{(l)}}$.
Hence, again using~(\ref{Eblock}), we have that
\[
f[ \bA + \bepsilon \bone ] - f[ \bB + \bepsilon \bone ] = %
h[ \bA + \bepsilon \bone ] - h[ \bB + \bepsilon \bone ] \in \overline{\cS_n^{(l)}}.
\]
By the Schur product theorem, any absolutely monotone function preserves Loewner monotonicity
when acting entrywise; see Theorem~\ref{Tschoenberg}.
Hence the matrix $f[ \bB + \bepsilon \bone ] - f[ \bA + \bepsilon \bone ]$
is positive semidefinite, and so has rank at most~$l$, by the previous working.
\qedhere
\end{enumerate}
\end{proof}

We now present an application of the previous proposition
after first introducing some notation. Given a set
$P = \{ p_1 < \cdots < p_k \} \subseteq [ 1 : m ]$
and $\bx = ( x_1, \ldots, x_m ) \in \R^m$, let $P' = [ 1 : m ] \setminus P$
and $\bx_P = ( x_{p_1}, \ldots, x_{p_k} )$. Further, let $| P |$
denote the cardinality of the set $P$, so that $| P | \in [ 0 : m ]$.

\begin{corollary}\label{Csample}
Let $I := ( -\rho, \rho )$, $( 0, \rho )$ or $[ 0, \rho )$, where
$0 < \rho \leq \infty$.
Suppose the function \mbox{$f : I^m \to \R$} is such that the entrywise transform
$f[-]$ sends $\cS_n^{(\bk)}( I )$ to $\overline{\cS_n^{(l)}}$ for all
$n \geq k_{\max}$, where $\bk \in \Z_+^m$ and $l$ is a non-negative
integer.
\begin{enumerate}
\item Given any $P \subseteq [ 1 : m ]$, we may write $f$ in the \emph{split representation}
\begin{equation}\label{Esplitf}
f( \bx ) = \sum_{\balpha_P \in \Z_+^{| P |}} c_{\balpha_P}( \bx_{P'} ) x_P^{\balpha_P} %
\qquad \text{for all } \bx \in I^m,
\end{equation}
where each $c_{\balpha_P}$ is a function on $I^{P'}$ such that
$\bx_{P'} \mapsto c_{\bzero_P}( \bx _{P'} ) - c_{\bzero_P}( \bzero_{P'} )$
and~$c_{\balpha_P}$ (with $\balpha_P \neq \bzero_P$)
are absolutely monotone.

\item For each $p \in [ 1 : m ]$, let $B^{(p)} \in \cS_n^{(0)}$ have
rank $k_p$ and suppose the non-negative constant $\epsilon_p$ is such
that
$B^{(p)} + \epsilon_p \bone_{n \times n}$ has all of its entries in $I$.
If $g : I^P \to \R$ is such that
\[
g( \bx_P ) = %
\sum_{\balpha_P \in \Z_+^{| P |} \setminus \{ \bzero \}} c_{\balpha_P}( \bepsilon_{P'} ) %
\bx_P^{\balpha_P} \qquad \textrm{for all } \bx_P \in I^P,
\]
where each $c_{\balpha_P}$ is as in (\ref{Esplitf}),
then the matrix $g[ \bB_P ]$ is positive semidefinite and has rank at most $l$.
\end{enumerate}
\end{corollary}

\begin{proof}[Proof of Corollary~\ref{Csample} for $I = (-\rho,\rho)$]
\begin{enumerate}
\item This is an immediate consequence of Proposition~\ref{Pabsmon}(1)
and the fact that $\bx^\balpha = \bx_{P'}^{\balpha_P'} \bx_P^{\balpha_P}$.

\item That $g[ \bB_P ]$ is positive semidefinite is a straightforward consequence
of the Schur product theorem.
Next, applying Proposition~\ref{Pabsmon}(2) with $ (A^{(p)}, B^{(p)}, \epsilon_p )$
there equal to
$( \epsilon_p' \bone_{n \times n}, B^{(p)} + \epsilon_p' \bone_{n \times n}, \epsilon_p')$
for all $p$, where $\epsilon_p' = \epsilon_p / 2$, it follows that
\[
C := f[ \bB + \bepsilon \bone ] - f[ \bepsilon \bone ]
\]
is positive semidefinite and has rank at most $l$. Thus, for the final claim
it suffices to show that the inequality $C \geq g[ \bB_P ]$ holds. Note first that
\[
c_{\alpha_P}[ \bB_{P'} + \bepsilon_{P'} \bone ] \geq %
c_{\alpha_P}[ \bepsilon_{P'} \bone ] \quad \textrm{for all } \alpha_P \in \Z_+^{| P |},
\]
by Loewner monotonicity, as in the proof of Proposition~\ref{Pabsmon}(2).
Again using the Schur product theorem, it follows that
\begin{align*}
C & = \sum_{\balpha_P \in \Z_+^{| P |}} \Bigl( %
c_{\balpha_P}[ \bB_{P'} + \bepsilon_{P'} \bone ] \circ %
 ( \bB_P + \bepsilon_P \bone )^{\circ \balpha_P} - %
 c_{\balpha_P}[ \bepsilon_{P'} \bone ] \circ %
 ( \bepsilon_P \bone )^{\circ \balpha_P} \Bigr) \\
 & \geq \sum_{\balpha_P \in \Z_+^{| P |}} c_{\balpha_P}[ \bepsilon_{P'} \bone ] \circ %
\Bigl( ( \bB_P + \bepsilon_P \bone )^{\circ \balpha_P} - %
( \bepsilon_P \bone )^{\circ \balpha_P} \Bigr)\\
 & = \sum_{\alpha_P \neq \bzero} c_{\balpha_P}( \bepsilon_{P'} ) \Bigl( %
( \bB_P + \bepsilon_P \bone )^{\circ \balpha_P} - %
( \bepsilon_P \bone )^{\circ \balpha_P} \Bigr) \\
 & \geq \sum_{\alpha_P \neq \bzero} c_{\balpha_P}( \bepsilon_{P'} ) %
 \bB_P^{\circ \balpha_P} \\
  & = g[ \bB_P ].
\qedhere
\end{align*}
\end{enumerate}
\end{proof}

With Proposition~\ref{Pabsmon} and Corollary~\ref{Csample} at hand, we
show the two theorems above.

\begin{proof}[Proof of Theorem~\ref{Tto0} for $I = (-\rho,\rho)$]
We begin by showing a cycle of implications for the three equivalent
hypotheses. The Schur product theorem gives that (3) implies (1) and
it is immediate that (1) implies (2). We now assume~(2) and deduce that
(3) holds.

Note first that the case $m_0 = m$ is Theorem~\ref{T000}, so we assume
henceforth that $m_0 < m$. Proposition~\ref{Pabsmon}(1) gives that
$\bx \mapsto f( \bx ) - f( \bzero )$ is absolutely monotone and we
claim that $f( \bzero ) \geq 0$, so that $f$ is itself absolutely monotone.
To see this, let $N = 1 + k_{\max}$ and $\eps \in ( 0, \rho )$, and set
\begin{equation}\label{Etestmatrix3}
B^{(p)} := \bzero_{( N - k_p )  \times ( N - k_p) } \oplus %
-\eps \Id_{k_p} \in \cS_N^{(k_p)}(I) \qquad \text{for all } p \in [ 1 : m ].
\end{equation}
By hypothesis, $f[ \bB ] \in \cS_N^{(0)}$ is positive
semidefinite. In particular, its $(1,1)$ entry $f( \bzero )$ is
non-negative.

To complete the first part of the proof, we now show that the power series
that represents $f$ contains no monomials involving any of $x_{m_0 + 1}$,
$\ldots,$ $x_m$.
We suppose without loss of generality that there is a monomial
containing $x_m$.
Let $a$, $b$, $2 \eps \in (0,\rho)$, with $a < b$, let
$N \geq 2 + k_{\max}$, and set
\[
B^{(m)}_0 := \begin{bmatrix} a & b \\ b & a \end{bmatrix} \oplus %
( 2 \eps \bone_{k_m \times k_m} - \eps \Id_{k_m} ) \oplus %
 \eps \Id_{N - k_m - 2} \in \cS_N^{(k_m)},
\]
since the first matrix has one positive and one negative eigenvalue and
$2 \eps \bone_{k_m \times k_m}$ has eigenvalue $0$ with
multiplicity $k_m - 1$ and eigenvalue $2 k_m \eps$ with
multiplicity $1$.

Similarly, for $p \in [ 1 : m - 1 ]$, let
\[
B'_p := ( 2 \eps \bone_{(k_p + 1) \times (k_p + 1)} - \eps \Id_{k_p + 1} ) \oplus %
\eps \Id_{N-k_p-2} \in \cS_{N - 1}^{(k_p)}.
\]
By the continuity of eigenvalues, we can now take $\epsilon_0$ positive
but sufficiently small so that, for all $p \in [ 1 : m - 1 ]$,
\begin{equation}\label{EBpeps}
B'_p + \epsilon_0 \bone_{( N - 1 ) \times ( N - 1 )} \in
\cS_{N - 1}^{(k_p)}\bigl( ( 0, \rho) \bigr) \quad \textrm{and} \quad 
B^{(m)}_0 + \epsilon_0 \bone_{N \times N} \in \cS_N^{(k_m)}
\bigl( ( 0, \rho) \bigr).
\end{equation}
After this preamble, we can now define the test matrices we need.
We take the partition
$\pi = \bigl\{ \{ 1, 2 \}, \{ 3 \}, \ldots, \{ N \} \bigr\}$
of $[ 1 : N ]$ into $N - 1$ subsets and then use the inflation operator
$\up$ from Definition~\ref{Dinflation} to produce
\begin{equation}\label{Emultitest2}
B^{(p)} := \left\{\begin{array}{ll}
\up(B'_p + \epsilon_0 \bone_{( N - 1 ) \times ( N - 1 )} ) & \textrm{if } p \in [ 1 : m - 1 ], \\[1ex]
B^{(m)}_0 + \epsilon_0 \bone_{N \times N} & \textrm{if } p = m.
\end{array}\right.
\end{equation}
By Lemma~\ref{Linertia}, we have that
$B^{(p)} \in \cS_N^{k_p}\bigl( ( 0, \rho ) \bigr)$
for all $p \in [ 1 : m ]$. Hence, by hypothesis, the matrix
$f[ \bB ] \in \cS_N^{(0)}$. In particular, its leading principal $2
\times 2$ submatrix is positive semidefinite, but this submatrix $M$
equals
\[
\begin{bmatrix} F( a ) & F( b ) \\ F( b ) & F( a ) \end{bmatrix}, \quad \textrm{where } %
F( x_m)  := f( \eps + \epsilon_0, \ldots, \eps + \epsilon_0, x_m ) %
\text{ for all } x_m \in ( 0, \rho ).
\]

As $f$ is absolutely monotone and we have assumed that its power-series
representation contains a monomial involving $x_m$, the function $F$ is
strictly increasing. This implies that the matrix~$M$ has negative
determinant, contradicting the fact that it is positive semidefinite.
Hence $f$ cannot depend on $x_m$, and similarly not on $x_p$ for any
$p > m_0$. Thus $(2)$ implies $(3)$.

Finally, suppose $\bk = \bzero$ and $l \geq 1$. The forward implication
is a special case of Proposition~\ref{Pabsmon}(1). For the converse, note
that if $g : \bx \mapsto f( \bx ) - f( \bzero )$ is absolutely monotone
then the entrywise transform $g[-]$ sends $\cS_n^{(\bzero)}(I)$ to
$\cS_n^{(0)}$, by the Schur product theorem.
Hence, by Lemma~\ref{LWeyl}, the entrywise map
$f[-]$ sends $\cS_n^{(\bk)}(I)$ to
$\overline{\cS_n^{(1)}} \subseteq \overline{\cS_n^{(l)}}$, as claimed.
\end{proof}

The remainder of this section is occupied by the following proof.

\begin{proof}[Proof of Theorem~\ref{Tmulti2} for $I = ( -\rho, \rho )$]
We first show that (3) implies (2).
Suppose $f$ has the form (\ref{Epresrep}) and let
$G( \bx' ) = F( \bx' ) - F( \bzero )$ for all
$\bx' := (x_1, \ldots, x_{m_0})$.
If the matrix $B^{(p)} \in \cS_n^{(k_p)}(I)$ for all $p \in [ 1 : m ]$
then
\[
f[ \bB] = F( \bzero) \bone_{n \times n} + G[ \bB_{[ 1 : m_0 ]} ]
+ \sum_{p = m_0 + 1}^m c_p B^{(p)}.
\]
Since $G$ is absolutely monotone and
$B^{(1)}$, $\ldots,$ $B^{(m_0)} \in \cS_n^{(0)}$,
the matrix $G[ \bB_{[ 1 : m_0 ]} ]$ is positive semidefinite.
Hence, by repeated applications of Lemma~\ref{LWeyl}, the
number of negative eigenvalues that the matrix $f[ \bB]$ has
is bounded above by that number for
\[
M := F( \bzero ) \bone_{n \times n} + %
\sum_{p = m_0 + 1}^m c_p B^{(p)}.
\]
By spectral decomposition, we can write $B^{(p)} = B^{(p)}_+ - B^{(p)}_-$,
where $B^{(p)}_+$ and $B^{(p)}_-$ are each the sum of rank-one positive
semidefinite matrices. Repeated application of Lemma~\ref{LWeyl} shows
that $B^{(p)} + B^{(q)} \in \overline{\cS_n^{(k_p + k_q)}}$ and therefore
we have that $M \in \overline{\cS_n^{(K)}}$, where
$K := \bone_{F( 0 ) < 0} + \sum_{p : c_p > 0} k_p \leq l$ by~(3)(c).
Hence $M$ has at most $l$ negative eigenvalues and therefore
so does $f[ \bB ]$.

As in the single-variable case, we now claim that this working extends to 
show that $(3) \implies (1)$.
Suppose $\bk' = ( k_1', \ldots, k_m' ) \in \Z_+^m \setminus \{ \bzero \}$
is such that $k'_j \leq k_j$ for all $j \in [ 1 : m ]$,
so that $m_0' \geq m_0$ and $k_{\max}' \leq k_{\max}$.
If (3) holds for $\bk$ then we can write
\[
f( \bx ) = F_0( x_1, \ldots, x_{m_0'} ) + \sum_{p = m_0' + 1}^m c_p' x_p,
\]
where
\[
F_0( x_1, \ldots, x_{m_0'} ) := %
F( x_1, \ldots, x_{m_0} ) + \sum_{p = m_0 + 1}^{m_0'} c_p x_p
\]
and $c_p' = c_p$ for $p \in [ m_0' + 1 : m ]$.
Since $F_0( \bzero_{m_0'} ) = F( \bzero_{m_0} )$, we have that
\[
\bone_{F_0( \bzero ) < 0} + \sum_{p : c_p' > 0} k_p \leq %
\bone_{F( \bzero ) < 0} + \sum_{p : c_p > 0} k_p \leq l,
\]
and therefore $f[ - ]$ maps $\cS_n^{(\bk')}( I )$ into
$\overline{\cS_n^{(l)}}$ for all $n \geq k_{\max}'$,
by the previous working.

That $(1) \implies (2)$ is immediate; finally, we show that (2) implies (3),
in several steps. We commence by noting that
Proposition~\ref{Pabsmon}(1) gives that
$g : \bx \mapsto f( \bx ) - f( \bzero )$
has a power-series representation with non-negative coefficients.

\noindent \textit{Step 1:
The only powers of $x_p$ with $p \in [ m_0 + 1 : m ]$ 
that may occur in any monomial in the power-series representation of
$f$ are $x_p^0$ and $x_p^1$.}

Lemma~\ref{LWeyl} implies that the transform
$g [ - ]$ sends $\cS_n^{(\bk)}(I)$ to $\overline{\cS_n^{(l + 1)}}$
for all $n \geq k_{\max}$.
We show the claim by applying Theorem~\ref{Tstrongest} to the restriction of $g$
to each coordinate~$x_{p_0}$, where $p_0 \in [ m_0 + 1 : m ]$. 
Given $\eps \in ( 0, \rho )$ and $B \in \cS_n^{(k_{p_0})}(I)$
for each $p \in [ 1 : m ]$, we define the block test matrix
\[
B^{(p)} := \left\{\begin{array}{ll}
B \oplus \bzero_{k_{\max} \times k_{\max}} & \text{if } p = p_0,\\[1ex]
\eps \bone_{n \times n} \oplus ( -\eps \Id_{k_p} ) \oplus
\bzero_{(k_{\max} - k_p) \times (k_{\max} - k_p)} \qquad &
\text{otherwise},\end{array}\right.
\]
where the term $-\eps \Id_{k_p}$ is omitted if $k_p = 0$.
As the block diagonal matrix $g[\bB]$ has at most $l+1$ negative
eigenvalues, so does its leading $n \times n$ block
$g_{p_0}[B]$, where
\[
g_{p_0} : I \to \R; \ %
x \mapsto g( \eps \bone_{p_0 - 1}^T, x, \eps \bone_{m - p_0}^T ).
\]
Thus $g_{p_0}[ - ]$ sends $\cS_n^{(k_{p_0})}(I)$ into
$\overline{\cS_n^{(l + 1)}}$, and it follows from Theorem~\ref{Tstrongest}
that $g_{p_0}$ is of the form required.

\noindent\textit{Step 2:
The function $f$ has the power-series representation
\[
f( \bx ) = F( \bx' ) + \sum_{p = m_0 + 1}^m c_p( \bx' ) x_p \qquad \text{for all } \bx \in I^m,
\]
where $\bx' = ( x_1, \ldots, x_{m_0} )$ and the functions $c_p$ and
$\bx' \mapsto F( \bx' ) - F( \bzero )$ are absolutely monotone.}

Fix any $\eps \in ( 0, \rho )$ and define
\[
h : I \to \R; \ x \mapsto g( \eps \bone_{m_0}^T, x \bone_{m - m_0}^T ).
\]
Then $h$ is a polynomial with non-negative coefficients whose degree is
independent of~$\eps$, by Step~1. The claim follows if we can show that $h$ is
linear.

We suppose for contradiction that the claim fails to hold, so that $d$, the degree of $h$,
is at least $2$. We suppose $N > ( d + 1 )^{l + 3}$ and let $\bu$ be as in (\ref{EdNu}),
where $u_0 \in ( 0, 1 )$, and $B$ be as in (\ref{EBs}), where
$\eps \in ( 0, \rho )$, $\epsilon$ is positive and  small enough to ensure $B$
has entries in $( 0, \rho )$ and $s_j := ( d + 1 )^j$ for $j \in [ 0 : l + 2 ]$.
For $p \in [ 1 : m ]$, we define the block test matrix
\begin{equation}\label{EStep2}
B^{(p)} := \left\{\begin{array}{ll}
\eps \bone_{N \times N} \oplus \bzero_{( k_{\max} - 1 ) \times ( k_{\max} - 1 )} & %
\text{if } p \in [1 : m_0],\\[1ex]
B \oplus ( -\eps \Id_{k_p - 1} ) \oplus %
\bzero_{( k_{\max} - k_p ) \times ( k_{\max} - k_p )} & \text{if } p \in [ m_0 + 1 : m ].
\end{array}\right.
\end{equation}
Since each $B^{(p)} \in \cS_{N + k_{\max} - 1}^{(k_p)}( I )$,
the matrix $g[ \bB ]$ has at most $l + 1$ negative eigenvalues.
Hence so does its leading $N \times N$ block,
which is precisely $h[ B ]$, but this is impossible by the
analysis following~(\ref{EBs}). This contradiction shows that $h$ is linear.

\noindent\textit{Step 3:
In the notation of Step~2, each absolutely monotone function $c_p$ is constant.}

To establish this claim we fix $\eps \in ( 0, \rho )$ and let
\[
A^{(p)} := \left\{\begin{array}{ll}
\eps \Id_{l + 1} \oplus \bzero_{k_{\max} \times k_{\max}} & %
\text{if } p \in [ 1 : m_0 ],\\[1ex]
\bzero_{( l + 1 + k_{\max} ) \times ( l + 1 + k_{\max}) } & \text{if } p \in [ m_0 + 1 : m ]
\end{array}\right.
\]
and
\[
B^{(p)} := \left\{\begin{array}{ll}
A^{(p)} & \text{if } p \in [ 1 : m_0 ],\\[1ex]
\eps \bone_{( l + 1 ) \times ( l + 1 )} \oplus \eps \Id_{k_p - 1} \oplus %
\bzero_{( k_{\max} - k_p + 1 ) \times ( k_{\max} - k_p + 1 )} & %
\text{if } p \in [ m_0 + 1 : m ].
\end{array}\right.
\]
Given any $\epsilon \in ( 0, \rho - \eps )$, we
let $\epsilon_1 = \cdots = \epsilon_m = \epsilon$
and apply Proposition~\ref{Pabsmon}(2)
to see that $f[ \bB + \bepsilon \bone ] - f[ \bA + \bepsilon \bone ]$
is positive semidefinite and has rank at most $l$. As this is a
block-diagonal matrix, the same holds for its
$( l + 1) \times ( l + 1 )$ leading principal submatrix~$M$.
In particular, $M$ is singular.

We now obtain a contradiction if any absolutely monotone function $c_p$
is non-constant. We have that
\begin{align*}
M & = \eps \sum_{p = m_0 + 1}^m %
c_p[ \eps \Id_{l + 1} + \epsilon \bone_{( l + 1 ) \times ( l + 1 )}, \ldots, %
\eps \Id_{l + 1} + \epsilon \bone_{( l + 1 ) \times ( l + 1 )} ]\\[1ex]
 & = %
 \bigl( h( \eps + \epsilon ) - h( \epsilon ) \bigr) \Id_{l + 1} + %
h( \epsilon ) \bone_{( l + 1 ) \times ( l + 1 )},
\end{align*}
where $h : x \mapsto \eps \sum_{p = m_0 + 1}^m c_p( x \bone_{m_0}^T )$.
If any function $c_p$ contains a non-trivial monomial, then so
does the absolutely monotone map $h$, which implies that
\[
a := h( \eps + \epsilon ) > b := h( \epsilon ) \geq 0,
\]
and so the matrix $M = ( a - b ) \Id_{l + 1} + b \bone_{( l + 1) \times ( l + 1 )}$ would be
positive definite.

\noindent \textit{Step 4:
Completing the proof.}

To conclude, we assume $f$ has the form (\ref{Epresrep2}) and show the
bound
\[
\bone_{F( \bzero ) < 0} + \sum_{p : c_p > 0} k_p \leq l.
\]

We adapt the test matrix in~(\ref{Ectrex}) and the subsequent discussion
to the present situation. We define $P_+ := \{ p \in [ m_0 + 1 : m ]
: c_p > 0 \}$; if $P_+$ is empty
then there is nothing to prove, so we assume otherwise. We let
$\{ v_1 := \bone_{K + 1}, v_2, \dots, v_{K + 1} \}$ be an orthogonal basis
of~$\R^{K + 1}$, where  $K := \sum_{p \in P_+} k_p$ and fix a partition
$\{ J_p : p \in P_+ \}$ of $[ 2 : K + 1 ]$ such that $|J_p| = k_p$ for all $p$.
We fix $\delta \in \bigl( 0, \rho \min\{ c_p : p \in P_+ \} \bigr)$ and choose a
positive~$\epsilon$ small enough to ensure the entries of the matrix
\begin{equation}\label{EAp}
A^{(p)} := \frac{\delta}{c_p |P_+|} \bone_{K + 1} \bone_{K + 1}^T -
\frac{\epsilon}{c_p} \sum_{j \in J_p} v_j v_j^T
\end{equation}
lie in $( 0 ,\rho )$. Lemma~\ref{Lsylv} then gives that
$A^{(p)} \in \cS_{K + 1}^{(k_p)}(I)$ for all $p \in P_+$.

We now let $N := K + k_{\max}$, choose $\eps \in ( 0, \rho )$, and use the
inflation operator $\up$ from Definition~\ref{Dinflation} to obtain the
test matrix
\[
B^{(p)} := \left\{\begin{array}{ll}
\bzero_{N \times N} & \text{if } p \in [ 1 : m_0 ],\\[1ex]
\up( A^{(p)} ) & \text{if } p \in P_+,\\[1ex]
\bzero_{( N - k_p ) \times ( N - k_p )} \oplus ( -\eps \Id_{k_p} ) & \text{otherwise},
\end{array}\right.
\]
where 
$\pi := \bigl\{ \{ 1 \}, \ldots, \{ K \}, \{ K + 1, \ldots, N \} \bigr\}$.
The matrix $B^{(p)} \in \cS_N^{(k_p)}( I )$ for all $p \in P_+$, by
Lemma~\ref{Linertia}, so $\bB \in \cS_N^{(\bk)}$ and therefore $f[ \bB ]
\in \overline{\cS_N^{(l)}}$. As we see directly that
\[
f[ \bB ] = %
F( \bzero ) \bone_{N \times N} + \sum_{p \in P_+} c_p B^{(p)} = %
\up\Bigl( F( \bzero ) \bone_{(K+1) \times (K+1)} + \sum_{p \in P_+} c_p A^{(p)} \Bigr),
\]
it follows from Lemma~\ref{Linertia} that
\begin{equation}\label{Efinal}
F( \bzero ) \bone_{( K + 1 ) \times ( K + 1 )} + \sum_{p \in P_+} c_p A^{(p)} = %
( F( \bzero ) + \delta ) \bone_{K + 1} \bone_{K + 1}^T - %
\epsilon \sum_{j = 2}^{K + 1} v_j v_j^T
\end{equation}
also has at most $l$ negative eigenvalues.

There are now two cases to consider. If $F( \bzero ) \geq 0$ then
Lemma~\ref{Lsylv} applied to~(\ref{Efinal}) gives that $K \leq l$, as
desired. If, instead, we have that $F( \bzero ) < 0$ then we may shrink
$\delta$ if necessary to ensure that $F( 0 ) + \delta < 0$, and then
again Lemma~\ref{Lsylv} gives the desired inequality, now $K + 1\leq l$.
\end{proof}

\section{Multi-variable transforms for matrices with positive entries}\label{S1sided}

In this section, we provide proofs for results stated above where
matrices have entries in the one-sided intervals $I = ( 0, \rho )$ and
$I = [ 0, \rho )$, where $0 < \rho \leq \infty$, and functions have
domains of the form $( 0, \rho )^m$ and $[ 0, \rho )^m$, where $m \geq 1$.

\begin{remark}
As with Remark~\ref{Rrescale}, such classification results imply
their counterparts for functions with domains of the form
\[
( 0, \rho_1 ) \times \cdots \times ( 0, \rho_m ) \qquad \text{or} \qquad
[ 0, \rho_1 ) \times \cdots \times [ 0, \rho_m ),
\]
where $0 < \rho_1, \ldots, \rho_m \leq \infty$.
\end{remark}

Having explored the one-variable and multi-variable situations separately
in the two-sided case, where $I = ( -\rho, \rho )$, in this section
we adopt a unified approach.

The case of $I = [ 0, \rho )$ will follow from the other two cases.
Thus, we first let $I = (0,\rho)$ and
defer the proofs for $I = [0,\rho)$ to Section~\ref{Snonneg}.
While the classification results for $I = ( -\rho, \rho )$ above
hold verbatim (except for the change of domain) when $I = ( 0, \rho )$,
the proofs need to be modified in several places. We describe these modifications
in what follows, and skip lightly over the remaining arguments,
which are essentially the same as those in Sections~\ref{S2} and~\ref{Smulti}.

The first step is to prove Proposition~\ref{Pabsmon} when $I = ( 0, \rho )$.
For this, we require the enhanced set of test matrices given by the following lemma.

\begin{lemma}\label{Lprelim2}
Given constants $a$ and $b$, with $0 \leq a < b$, a non-negative integer~$k$
and a non-negative constant $\epsilon$, the map
\[
\Psi = \Psi( a, b, k , \epsilon ) : \cS_n^{(0)} \to \cS_{k + 1 + n}^{(k)}; \ %
B \mapsto ( M_{k + 1}( a, b ) \oplus B ) + \epsilon \bone_{( k + 1 + n ) \times ( k + 1 + n )}
\]
is well defined, where
\[
M_{k + 1}( a, b ) := \begin{bmatrix}
 a & b & b & \cdots & b \\
 b & a & b & \cdots & b \\
 b & b & a & \cdots & b \\
 \vdots & \vdots & \vdots & \ddots & \vdots \\
 b & b & b & \cdots & a
\end{bmatrix} = %
( a - b ) \Id_{k + 1} + b \bone_{( k + 1 ) \times ( k + 1 )}.
\]
Furthermore, the $k$ negative eigenvalues of $\Psi( B )$ are all equal to
$a - b$ for any choice of $B$.
\end{lemma}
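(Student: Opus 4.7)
The plan is to analyze $\Psi(B)$ by explicitly diagonalizing the structured part and then handling the rank-one perturbation using invariance arguments. First I would observe that $M_{k+1}(a,b) = (a-b)\Id_{k+1} + b\bone_{(k+1)\times(k+1)}$ has a completely transparent spectrum: the eigenvector $\bone_{k+1}$ gives eigenvalue $a+kb$, and the orthogonal complement $\bone_{k+1}^\perp$ is a $k$-dimensional eigenspace for the eigenvalue $a-b < 0$. Since $a,b \geq 0$ and $a < b$ forces $b > 0$, the eigenvalue $a+kb$ is strictly positive whenever $k \geq 1$, and equals $a \geq 0$ when $k = 0$; either way $M_{k+1}(a,b) \in \bs_{k+1}^{(k)}$.

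Next, I would choose an orthonormal basis $v_1, \ldots, v_k$ of $\bone_{k+1}^\perp \subseteq \R^{k+1}$ and extend each $v_j$ by zeros in the last $n$ coordinates to produce vectors $\tilde v_j \in \R^{k+1+n}$. The crucial point is that each $\tilde v_j$ is orthogonal to $\bone_{k+1+n}$, so the rank-one matrix $\bone_{(k+1+n)\times(k+1+n)} = \bone_{k+1+n}\bone_{k+1+n}^T$ annihilates $\tilde v_j$. Combined with the block-diagonal structure of $M_{k+1}(a,b) \oplus B$, this immediately gives $\Psi(B) \tilde v_j = (a-b) \tilde v_j$, providing $k$ mutually orthogonal eigenvectors of $\Psi(B)$ with eigenvalue $a-b < 0$.

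To finish I would show that $\Psi(B)$ has no further negative eigenvalues by exhibiting positive semidefiniteness on the orthogonal complement $W := \spn\{\tilde v_1, \ldots, \tilde v_k\}^\perp$, which is $\Psi(B)$-invariant by symmetry. A direct computation identifies $W$ with $\R\bone_{k+1} \oplus \R^n \subseteq \R^{k+1} \oplus \R^n$, since $w = (w',w'')$ lies in $W$ exactly when $w' \perp \bone_{k+1}^\perp$, i.e.\ $w' \in \R\bone_{k+1}$. On this subspace the restriction of $M_{k+1}(a,b) \oplus B$ is the positive semidefinite operator $(a+kb)\Id_1 \oplus B$, and the restriction of $\epsilon \bone_{(k+1+n)\times(k+1+n)}$ is a rank-one positive semidefinite operator, so their sum is positive semidefinite. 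Dimension counting ($k$ negative directions plus $n+1$ non-negative directions equals $k+1+n$) then confirms that $\Psi(B) \in \bs_{k+1+n}^{(k)}$ and that its $k$ negative eigenvalues are all equal to $a - b$, as required.

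I do not expect a genuine obstacle here; the only subtlety is verifying that $\bone_{k+1+n}$ is annihilated by each $\tilde v_j$ (which depends entirely on padding with zeros in the last $n$ coordinates) and then recognising the explicit form of $W$ so that the restricted operator can be written down.
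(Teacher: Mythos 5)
Your proof is correct, and the first half (finding the $k$ eigenvectors $\tilde v_j$ for the eigenvalue $a-b$, by padding an orthonormal basis of $\bone_{k+1}^\perp$ with zeros so that both the block structure and the rank-one term $\epsilon\bone\bone^T$ act trivially on them) is exactly the computation the paper performs using the spectral projection $P^\perp = \Id_{k+1} - (k+1)^{-1}\bone_{k+1}\bone_{k+1}^T$. Where you diverge is in the second half. The paper argues by Weyl interlacing: since $M_{k+1}(a,b)\oplus B$ already has $n+1$ non-negative eigenvalues and $\epsilon\bone\bone^T$ is positive semidefinite of rank one, adding it cannot decrease the number of non-negative eigenvalues, so $\Psi(B)$ has at most $k$ negative ones, and those are accounted for by the $\tilde v_j$. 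You instead make the complementary invariant subspace $W = \spn\{\tilde v_1,\ldots,\tilde v_k\}^\perp$ explicit, identify it as $\R\bone_{k+1}\oplus\R^n$, and verify directly that the restriction of $\Psi(B)$ to $W$ is a sum of two positive semidefinite operators, namely $(a+kb)\Id_1\oplus B$ and the compression of $\epsilon\bone\bone^T$ (which stays inside $W$ because $\bone_{k+1+n}\in W$). Your route avoids invoking the Weyl interlacing corollary at the cost of a small amount of extra bookkeeping on the invariant subspace; both arguments are of comparable length and rigour, and the explicit invariant-subspace version arguably gives a cleaner picture of the spectral decomposition of $\Psi(B)$.
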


\begin{proof}
The $k=0$ case is immediate, so we assume henceforth that $k \geq 1$.

The matrix $b \bone_{( k + 1 ) \times ( k + 1 )}$ has rank one and
eigenvector $\bone_{k + 1}$ with eigenvalue $( k + 1 ) b$, so
$M_{k + 1}( a, b )$ has eigenvalue $a + k b$ with multiplicity $1$ and
eigenvalue $a - b$ with multiplicity $k$.

Let $P = ( k + 1)^{-1} \bone_{k + 1} \bone_{k + 1}^T$ and
$P^\perp = \Id_{k + 1} - P$,
so that $P$ and $P^\perp$ are spectral projections such that
$M_{k + 1}( a, b ) = ( a + k b ) P + ( a - b ) P^\perp$ and
\[
\Psi( B ) = \bigl( ( a + k b ) P + ( a - b ) P^\perp \bigr) \oplus B + %
\epsilon \bone_{(k + 1 + n ) \times ( k + 1 + n )}.
\]
If $\bv$ lies in the range of $P^\perp$, which has dimension $k$,
then $P \bv = \bzero_{k + 1}$ and $\bone_{k + 1}^T \bv = 0$, so
\[
\Psi( B ) \begin{bmatrix} \bv \\ \bzero_n \end{bmatrix} = %
( a - b ) \begin{bmatrix} \bv \\ \bzero_n \end{bmatrix}.
\]
This shows that $a - b$ is an eigenvalue of $\Psi( B )$ with multiplicity
at least $k$. As the matrix $M_{k + 1}( a, b ) \oplus B$ has $k + 1$
non-negative eigenvalues, so does $\Psi( B )$, by
\cite[Corollary~4.3.9]{HJ} and the previous observation. This completes
the proof.
\end{proof}

\begin{proof}[Proof of Proposition~\ref{Pabsmon} and
Corollary~\ref{Csample} for $I = (0,\rho)$]

The reader can verify that the above proofs of
Proposition~\ref{Pabsmon}(2) and Corollary~\ref{Csample}
with $I = ( -\rho, \rho )$ go through
verbatim for $I = ( 0,\rho )$, since all of the test matrices used therein
have all their entries in $( 0, \rho )$.
It remains to show that the first part of Proposition~\ref{Pabsmon}
holds in this setting.

We begin by fixing $\epsilon \in I = ( 0, \rho )$
and $a$, $b \in ( 0, \rho - \epsilon )$ with $a < b$.
Given a positive integer $n$ and an $m$-tuple of matrices
$\bA = ( A^{(1)}, \ldots, A^{(m)} ) \in %
\cS_n^{(\bzero)}\bigl( ( 0, \rho - \epsilon ) \bigr)$,
the matrix
\[
B_0^{(p)} := \Psi( a, b, k_p, 0)\bigl( ( A^{(p)} )^{\oplus ( l + 2 )} \bigr) = %
M_{k_p + 1}( a, b ) \oplus ( A^{(p)} )^{\oplus ( l + 2 ) }
\]
is an $N_p \times N_p$ block-diagonal matrix
with entries in $[ 0, \rho - \epsilon )$, where $N_p := k_p + 1 + ( l + 2 ) n$,
for all $p \in [ 1 : m ]$.
By Lemma~\ref{Lprelim2}, the matrix
$B_0^{(p)} + \epsilon \bone_{N_p \times N_p} \in \cS^{(k_p)}_{N_p}( I )$.
We apply the inflation operator $\up[p]$ to this matrix, where
\[
\pi_p := %
\bigl\{ \{ 1, \ldots, k_{\max} - k_p + 1 \}, \{ k_{\max} - k_p + 2 \}, %
\ldots, \{ k_{\max} - k_p + N_p \} \bigr\}.
\]
By Lemma~\ref{Linertia}, the matrix
$B^{(p)} :=  \up[p]( B_0^{(p)} )$
is such that $B^{(p)} + \epsilon \bone_{N \times N} \in \cS_N^{(k_p)}( I )$,
where $N := k_{\max} + 1 + ( l + 2 ) n$.

It now follows from the hypotheses of the theorem that
$f[ \bB + \bepsilon \bone ] \in \overline{\cS_N^{(l)}}$,
where $\epsilon_1 = \cdots = \epsilon_m = \epsilon$. Thus,
by Lemma~\ref{LWeyl}, we have that
\[
g_\epsilon[ \bB ] \in \overline{\cS_N^{(l + 1)}}, %
\qquad \textrm{where } %
g_\epsilon( \bx ) := %
f( \bx + \epsilon \bone_m ) - f( \epsilon \bone_m ).
\]
The matrix $g_\epsilon[ \bB ]$ is block diagonal of the form
\[
g_\epsilon[ \up[1']\bigl( M_{k_1 + 1}( a, b ) \bigr), \ldots,
\up[m']\bigl( M_{k_m+ 1 }( a, b ) \bigr) ] \oplus g_\epsilon[ \bA ]^{\oplus ( l + 2 )},
\]
where
\[
\pi_{p'} := \{ A \cap \{ 1, \ldots, k_{\max} + 1 \} : A \in \pi_p \} \qquad \textrm{for all } p \in [ 1 : m ].
\]
Thus $g_\epsilon[ \bA ]$ cannot have any negative eigenvalues,
for any $m$-tuple $\bA \in \cS_n^{( \bzero )}\bigl( ( 0, \rho - \epsilon ) \bigr)$
and all $n \geq 1$.
It now follows from Theorem~\ref{T000} that
\[
f( \bx + \epsilon \bone_m ) - f( \epsilon \bone_m ) = g_\epsilon( \bx ) = %
\sum_{\balpha \in \Z_+^m} c_{\balpha, \epsilon} \bx^\balpha %
\quad \textrm{for all } \bx \in ( 0, \rho - \epsilon )^m,
\]
where $c_{\balpha, \epsilon} \geq 0 \textrm{ for all } \balpha \in \Z_+^m$.

As this holds for all $\epsilon \in ( 0, \rho )$, we see that $f$ is
smooth on~$( 0, \rho )^m$ and $( \partial^\balpha f ) ( \bx ) \geq 0$ for
all $\balpha \neq \bzero$ and all $\bx \in ( 0, \rho )^m$. In particular,
$\partial_{x_p} f$ is absolutely monotone on $( 0, \rho )^m$ for any
$p \in [ 1: m ]$ and so, by Theorem~\ref{Tabsmon}, it has there a
power-series representation with non-negative Maclaurin coefficients: we
can write
\[
( \partial_{x_p} f )( \bx ) = %
\sum_{\balpha \in \Z_+^m} ( \alpha_p + 1 ) c^{(p)}_{\balpha + \be_p^T} \bx^\balpha %
\qquad \textrm{for all } \bx \in ( 0, \rho )^m,
\]
where $c^{(p)}_{\balpha + \be_p^T} \geq 0$ for all $\balpha$.
Since $f$ is smooth, the mixed partial derivatives
$\partial_{x_p} \partial_{x_q} f$ and $\partial_{x_q} \partial_{x_p} f$
are equal for any distinct $p$ and $q$, whence
$c^{(p)}_\balpha = c^{(q)}_\balpha$ whenever $\alpha_p > 0$
and~$\alpha_q > 0$. Hence setting $c_\balpha := c^{(p)}_\balpha$
makes $c_\balpha$ well defined whenever $\balpha \neq \bzero$. We let
\[
\wt{f}( \bx ) := \sum_{\balpha \neq \bzero} c_\balpha \bx^\balpha \qquad %
\textrm{for all } \bx \in ( 0, \rho )^m,
\]
which is convergent because
\[
\sum_{\balpha \neq \bzero} | c_\balpha \bx^{\balpha} | \leq %
\sum_{p = 1}^m x_p %
\sum_{\balpha : \alpha_p > 0} ( \alpha_p + 1 ) c_{\balpha + \be_p^T} \bx^\alpha.
\]
Note that $\partial_{x_p} \wt{f} = \partial_{x_p} f$ on $( 0, \rho )^m$
for any $[ 1 : m ]$ and therefore $f = \wt{f} + c_\bzero$ if we let
$c_\bzero := f( \epsilon \bone_m ) - \wt{f}( \epsilon \bone_m )$.
This shows that $x \mapsto f( \bx )$ has a power-series representation
on~$( 0, \rho )^m$ of the form claimed and so completes the proof of
Proposition~\ref{Pabsmon}(1) when $I = ( 0, \rho )$.
\end{proof}

With Proposition~\ref{Pabsmon} and Corollary~\ref{Csample} now
established for $I = ( 0, \rho )$, we next show that the two main
theorems hold in this context.

We note first that if $f$ has a power-series representation on $( 0, \rho )^m$
as in the conclusion of Proposition~\ref{Pabsmon}(1) then the unique
extension of the function $f$ to $( -\rho, \rho )^m$ will also be denoted
by $f$ and will be used without further comment.

\begin{proof}[Proof of Theorem~\ref{Tto0} for $I = (0,\rho)$]
The implication $(1) \implies (2)$ is immediate, and that $(3) \implies (1)$
follows from the Schur product theorem. It remains to show that $(2) \implies (3)$
and also the case when $\bk = \bzero$ and $l \geq 1$. Given~(2), we note that
Proposition~\ref{Pabsmon}(1) shows that $f$ has a
power-series representation
\begin{equation}\label{Efpowerseries}
f( \bx ) = \sum_{\balpha \in \Z_+^m} c_\balpha \bx^\balpha \quad %
\text{for all } \bx \in (0, \rho)^m
\end{equation}
with the coefficients $c_\balpha \geq 0$ for all $\balpha \neq \bzero$.
In particular, the function $\bx \mapsto f( \bx ) - c_\bzero$ is continuous, non-negative,
and non-decreasing on $( 0, \rho )^m$ and hence extends to a continuous function on
$[ 0, \rho )^m$. We denote this extension by $g$ and use $g$ to extend $f$ to
$[ 0, \rho )^m$ so
that $f( \bx ) = g( \bx ) + c_\bzero$ for all $\bx \in [0,\rho)^m$.

Now the proof of Theorem~\ref{Tto0} for $I = ( -\rho, \rho )$ goes
through verbatim, with one exception: the test matrix
in~(\ref{Etestmatrix3}) must be changed. We let $N := k_{\max} + 1$ and
\[
\pi_p := \bigl\{ \{ 1 \}, \{ 2 \}, \ldots, \{ k_p \},
\{ k_p+1, \ldots, N \} \bigr\}.
\]
Given $a$, $b \in ( 0, \rho )$, with $a < b$, the matrix
\[
B^{(p)} := \up[p]\bigl( M_{k_p + 1}( a, b ) \bigr) \in \cS_N^{(k_p)}( I ) %
\qquad \textrm{for all } p \in [ 1 : m ],
\]
by Lemma~\ref{Lprelim2} and Lemma~\ref{Linertia}.
It follows that the matrix $f[ \bB ]$ is positive semidefinite, and so
its $( 1, 1 )$ entry $f( a \bone_m ) \geq 0$.
Since $f$ is continuous on $[ 0, \rho )^m$,
letting $a \to 0^+$ gives that $f( \bzero ) \geq 0$, as required.
\end{proof}

\begin{proof}[Proof of Theorem~\ref{Tmulti2} for $I = (0,\rho)$]
That $(1) \implies (2)$ is immediate, while the proofs that $(3) \implies (2)$
and $(3) \implies (1)$ go through as for the case where $I = (-\rho,\rho)$.
To show $(2) \implies (3)$, we note as above that
Proposition~\ref{Pabsmon}(1) and the remarks immediately
after~(\ref{Efpowerseries}) show the function $f$ extends
continuously to the domain $[ 0, \rho )^m$ with
the power-series representation~(\ref{Efpowerseries}) there, and
$g : \bx \mapsto f( \bx ) - f( \bzero )$ is absolutely monotone on this
domain. We will proceed parallel to the proof of Theorem~\ref{Tmulti2},
indicating the changes required for each of its steps.

The first step is somewhat different and its proof is entirely changed.

\noindent \textit{Step $1'$:
For each $p \in [ m_0 + 1 : m ]$, only finitely many powers of
$x_p$ occur in any monomial in the power-series representation of $f$.}

Here we apply Corollary~\ref{Csample} instead of Theorem~\ref{Tstrongest}
to the restriction of $g$ to each coordinate $x_{p_0}$.
To do this, we fix $p_0 \in [ m_0 + 1 : m ]$ and 
$n \geq \max\{ k_{p_0}, l + 3 \}$, and suppose $B \in \cS_n^{(0)}\bigl( [ 0, \rho ) \bigr)$
has rank $k_{p_0}$. We take $\epsilon > 0$ such that
$B + \epsilon \bone_{n \times n}$ has all its entries in $( 0, \rho )$,
choose $\eps \in ( 0, \rho - \epsilon )$, and let $N := n + k_{\max}$.
We let the matrix
\[
B^{(p)} := \left\{\begin{array}{ll}
B \oplus \bzero_{k_{\max} \times k_{\max}} & \text{if } p = p_0,\\[1ex]
\eps \bone_{n \times n} \oplus \eps \Id_{k_p - 1} \oplus %
\bzero_{( k_{\max} - k_p + 1 ) \times ( k_{\max} - k_p + 1 )} & %
\text{otherwise}.
\end{array}\right.
\]
We can write $g$ in the split representation
\[
g( \bx ) = \sum_{n = 0}^\infty c_n( \bx_{[ 1 : m ] \setminus \{ p_0 \}}) x_{p_0}^n
\quad \text{for all } \bx \in [ 0, \rho )^m
\]
and so the function $g_{p_0}' : [ 0, \rho ) \to \R$ such that
\[
g_{p_0}'( x ) := %
g( \epsilon \bone_{p_0 - 1}^T, x, \epsilon \bone_{m - p_0}^T ) - %
g( \epsilon \bone_{p_0 - 1}^T, 0, \epsilon \bone_{m - p_0}^T ) = %
\sum_{n = 1}^\infty c_n( \epsilon \bone_{m - 1}^T ) x^n.
\]
Applying Corollary~\ref{Csample}(2) with $P = \{ p_0 \}$
and $\epsilon_p := \epsilon$ for all $p \in [ 1: m ]$, the matrix
\[
g_{p_0}'[B^{(p_0)}] = %
\sum_{n=1}^\infty c_n( \epsilon \bone_{m-1}^T ) (B^{(p_0)})^{\circ n}
\]
is positive semidefinite and has rank at most $l$. As $g_{p_0}'( 0 ) = 0$,
this is a block matrix and so its leading $n \times n$ block has the same
property. Hence $g_{p_0}'$ sends any
$B \in \cS_n^{(0)}\bigl( [ 0, \rho ) \bigr)$ with rank $k_{p_0}$ to a
positive semidefinite matrix of rank at most $l$.
As $n \geq \max\{ k_{p_0}, l + 3 \}$, applying
\cite[Theorems~A and~B]{GKR-lowrank} with $I = [ 0, \rho )$ gives that
$g_{p_0}'$ is a polynomial. This proves the claim.

\noindent\textit{Step 2:
The function $f$ has the power-series representation
\[
f( \bx ) = F( \bx' ) + \sum_{p = m_0 + 1}^m c_p( \bx' ) x_p \qquad \text{for all } \bx \in I^m,
\]
where $\bx' = ( x_1, \ldots, x_{m_0} )$ and the functions $c_p$ and
$\bx' \mapsto F( \bx' ) - F( \bzero )$ are absolutely monotone.}

The proof here is the same as above for the case when $I = ( -\rho, \rho )$  until
the definition of the test matrices in~(\ref{EStep2}). Instead we let
$a$, $b \in ( 0, \rho )$ be such that $a < b$ and take
\[
B^{(p)} := \left\{\begin{array}{ll}
\eps \bone_{N \times N} \oplus \bzero_{k_{\max} \times k_{\max}} & %
\text{if } p \in [1 : m_0],\\[1ex]
B \oplus M_{k_p}( a, b ) \oplus \bzero_{( k_{\max} - k_p ) \times ( k_{\max} - k_p )} & %
\text{if } p \in [m_0 + 1 : m],
\end{array}\right.
\]
where $M_{k_p}( a, b )$ is as in Lemma~\ref{Lprelim2}.
The matrix $B^{(p)} \in \cS_{N + k_{\max}}^{(k_p)}\bigl( [ 0,  \rho ) \bigr)$
and therefore we have that
$B^{(p)} + \epsilon \bone_{( N + k_{\max} ) \times ( N + k_{\max} )} \in %
\cS_{N + k_{\max}}^{(k_p)}( I )$
for all $p$ and all sufficiently small and positive $\epsilon$,
by the continuity of eigenvalues. Hence
$g[ \bB + \bepsilon \bone_{( N + k_{\max} ) \times ( N + k_{\max} )} ]$
has at most $l+1$ negative eigenvalues, where $\epsilon_p = \epsilon$ for all
$p$. 
Letting $\epsilon \to 0^+$, the same holds for $g[ \bB ]$, again by the
continuity of eigenvalues, since $\overline{\cS_{N+k_{\max}}^{(l+1)}}$
is closed for the topology of entrywise convergence.
As $g[ \bB ]$ is a block matrix, its leading $N \times N$ block $h[ B ]$ also
has at most $l+1$ negative eigenvalues, where we recall that
$h(x) := g( \eps \bone_{m_0}^T, x \bone_{m-m_0}^T )$.
As before, this is not possible according to
the analysis following~(\ref{EBs}) if $h$ has degree~$2$ or more. Thus $h$
must be linear, which proves the claim.

\noindent \textit{Step 3:
In the notation of Step~2, each absolutely monotone function $c_p$ is constant.}

The proof above of this step when $I = (-\rho, \rho)$ goes through verbatim if
$I = ( 0, \rho )$.

\noindent \textit{Step 4:
Completing the proof.}

The proof of this step with $I = ( -\rho, \rho )$ may be imitated verbatim,
until and including the definition in~(\ref{EAp}) of the matrix $A^{(p)}$ for $p \in P_+$.
To continue in the current setting, where $I = ( 0, \rho )$, we let $N := K + k_{\max} + 1$,
choose $a$, $b \in (0, \rho)$ with $a < b$, and take some $\epsilon \in ( 0, \rho - b )$.
With the help of the inflation operator $\up$ from Definition~\ref{Dinflation}, we let
\[
B^{(p)} := \left\{\begin{array}{ll}
b \bone_{N \times N} & \text{if } p \in [ 1 : m_0 ],\\[1ex]
\up( A^{(p)} ) & \text{if } p \in P_+,\\[1ex]
(\bzero_{( N - k_p - 1) \times ( N - k_p - 1)} \oplus M_{k_p + 1}( a, b )) + %
\epsilon \bone_{N \times N} \qquad & \text{otherwise},
\end{array}\right.
\]
where 
$\pi := \bigl\{ \{ 1 \}, \ldots, \{ K \}, \{ K + 1, \ldots, N \} \bigr\}$ and
$M_{k_p + 1}( a, b )$ is as in Lemma~\ref{Lprelim2}.
It follows from Lemma~\ref{Lprelim2} and Lemma~\ref{Linertia}
that $\bB \in \cS_N^{(\bk)} \bigl( (0, \rho) \bigr)$. Hence
\[
f[ \bB ] = F[ B^{(1)}, \ldots, B^{(m_0)} ] + %
\sum_{p \in P_+} c_p B^{(p)} \in \overline{\cS_N^{(l)}}.
\]
Since $F$ is continuous, letting $b \to 0^+$ gives that
\begin{align*}
F[ \bzero_{N \times N}, \ldots, \bzero_{N \times N} ] + %
\sum_{p \in P_+} c_p B^{(p)} & = %
F( \bzero_{m_0}^T ) \bone_{N \times N} + %
\sum_{p \in P_+} c_p B^{(p)}\\
& = \up\Bigl( F( \bzero ) \bone_{( K + 1 ) \times ( K + 1 )} + %
\sum_{p \in P_+} c_p A^{(p)} \Bigr) \in \overline{\cS_N^{(l)}},
\end{align*}
by the continuity of eigenvalues. Hence the matrix
in~(\ref{Efinal}) also has at most $l$ negative eigenvalues, by Lemma~\ref{Linertia},
and the rest of the proof following~(\ref{Efinal}) goes through unchanged.
\end{proof}

\begin{remark}
We could have used Step~$1'$ and its proof as above in place of Step~{1}
in the proof given in Section~\ref{Smulti} of Theorem~\ref{Tmulti2} for
$I = ( -\rho, \rho )$.
\end{remark}

We now use Theorem~\ref{Tmulti2} to establish two results for preservers
when $I = ( 0, \rho )$, including a version of
Corollary~\ref{Cpreserver}. The proofs require another carefully chosen
set of test matrices.

\begin{lemma}\label{Ljudicious2}
There exists an invertible matrix $A \in \cS_3^{(1)}\bigl( ( 0, 5 ) \bigr)$
such that, for each positive integer $k$, the matrix
\[
A^{\oplus k} + t \bone_{3 k \times 3 k} \in \cS_{3k}^{(k - 1)} %
\qquad \textrm{whenever $t > 1$}.
\]
\end{lemma}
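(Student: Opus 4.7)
The plan is to analyze the signature of $M_k(t) := A^{\oplus k} + t \bone_{3k \times 3k}$ via an invariant-subspace decomposition. I would introduce the ``replication'' map $\iota \colon \R^3 \to \R^{3k}$, $x \mapsto ( x^T, \ldots, x^T )^T$, whose image $V$ is a three-dimensional subspace of $\R^{3k}$ containing $\bone_{3k} = \iota( \bone_3 )$. Its orthogonal complement is $V^\perp = \{ ( x_1^T, \ldots, x_k^T )^T \in \R^{3k} : x_1 + \cdots + x_k = \bzero \}$, of dimension $3( k - 1 )$. The intertwining identity $A^{\oplus k} \iota( x ) = \iota( A x )$ shows that $V$ is invariant under $A^{\oplus k}$, and since $\bone_{3k} \in V$, the rank-one operator $\bone_{3k} \bone_{3k}^T$ preserves both $V$ and $V^\perp$. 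Hence $M_k( t )$ decomposes as a direct sum of its two restrictions and its signature is the sum of the two signatures.

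Next, I compute these restrictions. In the orthonormal basis $u_j := \iota( e_j ) / \sqrt{k}$ of $V$, where $e_j$ is the $j$-th standard basis vector of $\R^3$, direct computation gives $A^{\oplus k} u_j = \sum_i A_{i j} u_i$ and $\bone_{3k} \bone_{3k}^T u_j = k \sum_i u_i$, so the matrix of $M_k( t )|_V$ in this basis is $A + k t \, \bone_{3 \times 3}$. On $V^\perp$, the rank-one perturbation vanishes (as $\bone_{3k} \in V$), and since the orthogonal projection onto $V$ commutes with the self-adjoint operator $A^{\oplus k}$, each $k$-dimensional eigenspace of $A^{\oplus k}$ splits into its one-dimensional $V$-part and its $( k - 1 )$-dimensional $V^\perp$-part; consequently $A^{\oplus k}|_{V^\perp}$ has each eigenvalue of $A$ with multiplicity $k - 1$, giving signature $( 2( k - 1 ), k - 1 )$. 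Therefore $M_k( t ) \in \bs_{3 k}^{( k - 1 )}$ if and only if $A + k t \, \bone_{3 \times 3} \geq 0$ in the Loewner order. Since $\{ k t : k \geq 1, \, t > 1 \} = ( 1, \infty )$ and $A + s \, \bone_{3 \times 3} = ( A + \bone_{3 \times 3} ) + ( s - 1 ) \bone_{3 \times 3}$ is a sum of positive semidefinite matrices whenever $s > 1$, the problem reduces to the single condition $A + \bone_{3 \times 3} \geq 0$.

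It remains to exhibit an invertible $A \in \bs_3^{( 1 )}\bigl( ( 0, 5 ) \bigr)$ with $A + \bone_{3 \times 3} \geq 0$. The ansatz $A := u u^T + v v^T - \bone_3 \bone_3^T$ with linearly independent $u, v \in \R^3$ automatically gives $A + \bone_{3 \times 3} = u u^T + v v^T \geq 0$, and Lemma~\ref{LWeyl} then forces $A$ to have at most one negative eigenvalue. Choosing $u := ( 2, 1, 1 )^T$ and $v := ( 1, 1, 2 )^T$ yields
\[
A = \begin{pmatrix} 4 & 2 & 3 \\ 2 & 1 & 2 \\ 3 & 2 & 4 \end{pmatrix},
\]
whose entries all lie in $( 0, 5 )$ and whose determinant is $-1$; this forces $A$ to have exactly one negative eigenvalue, completing the construction.

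The main obstacle is recognizing the correct reduction of the multi-$k$, multi-$t$ condition to the single Loewner inequality $A + \bone_{3 \times 3} \geq 0$ by diagonalizing through the decomposition $\R^{3 k} = V \oplus V^\perp$. Once this reduction is made, the search for $A$ is dramatically simplified by the rank-two ansatz $u u^T + v v^T - \bone_3 \bone_3^T$; the only mildly delicate step is choosing $u$ and $v$ with positive coordinates so that all entries of $A$ remain in $( 0, 5 )$.
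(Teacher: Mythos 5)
Your proof is correct, and it takes a genuinely different route from the paper's. The paper verifies the claim for the explicit matrix $A$ by computing the full characteristic polynomial of $A^{\oplus k} + t \bone_{3 k \times 3 k}$: it invokes the matrix determinant lemma and the adjugate identity for block matrices to obtain the factorization $\det( x \Id_3 - A )^{k - 1} ( x - 1 ) ( x^2 - ( 8 + 3 t k ) x - 1 + t k )$, and then reads off the signs of the roots. Your argument replaces this computation with a structural reduction: the replication subspace $V = \iota( \R^3 )$ and its orthogonal complement $V^\perp$ are both invariant for $A^{\oplus k} + t \bone_{3 k} \bone_{3 k}^T$, the restriction to $V$ is $A + k t \, \bone_{3 \times 3}$ in the natural orthonormal basis, and the restriction to $V^\perp$ is signature-rigid, so the whole statement collapses to the Loewner inequality $A + \bone_{3 \times 3} \geq 0$. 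This not only proves the lemma but explains where $A$ comes from, via the ansatz $u u^T + v v^T - \bone_3 \bone_3^T$ — the paper, by contrast, pulls $A$ out of thin air. The one small organizational wrinkle in your write-up is that you assert $A^{\oplus k}|_{V^\perp}$ has signature $( 2( k - 1 ), k - 1 )$ before you have produced $A$; this is harmless, but it would read more cleanly to phrase the reduction as ``the signature of $A^{\oplus k}|_{V^\perp}$ is $( k - 1 )$ times that of $A$,'' and then close the loop once $A$ is in hand. Both arguments yield the same matrix, and both are complete; yours is more conceptual, the paper's more bare-hands.
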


\begin{proof}
Note first that the matrix
\[
A = \begin{bmatrix}
 4 & 2 & 3 \\ 2 & 1 & 2 \\ 3 & 2 & 4
\end{bmatrix}
\]
has characteristic polynomial
\[
\det( x \Id_3 - A ) = ( x - 1 ) ( x^2 - 8 x - 1 )
\]
and eigenvalues $1$, $4 + \sqrt{17}$ and $4 - \sqrt{17}$, whence
$A \in \cS_3^{(1)}\bigl( ( 0, 5 ) \bigr)$.

To proceed, we recall Cauchy's matrix determinant lemma, which can be found as equation (0.8.5.11)
on p.26 of \cite{HJ} and states that
\[
\det( B + \bu \bv^T ) = \det B + \bv^T (\adj B) \, \bu,
\]
where $\adj B$ denotes the adjugate of the matrix $B$.
We also note the identity
\[
\adj( B \oplus C ) = ( \det C ) \adj B \oplus ( \det B ) \adj C.
\]
It follow from these identities that,
for any $t \in \R$,
\begin{multline*}
\det( x \Id_{3 k} - A^{\oplus k} - t \bone_{3 k \times 3k } ) \\
= \det( x \Id_3 - A )^{k - 1} ( \det( x \Id_3 - A ) - %
t k \bone_3^T \adj( x \Id_3 - A ) \bone_3 ).
\end{multline*}
A straightforward computation reveals that
\[
\bone_3^T \adj( x \Id_3 - A ) \bone_3 = ( 3 x - 1 ) ( x - 1 )
\]
and so the characteristic polynomial of the linear pencil
$A^{\oplus k} + t \bone_{3 k \times 3 k}$ is
\[
\det( x \Id_3 - A )^{k - 1} ( x - 1 ) %
( x^2 - ( 8 + 3 t k ) x - 1 + t k ).
\]
Thus $A^{\oplus k} + t \bone_{3 k \times 3 k} \in \cS_{3 k}^{(k - 1)}$
if both roots of the polynomial $x^2 - ( 8 + 3 t k ) x - 1 + t k$ are
positive, but this holds for all $t > 1$.
\end{proof}

Now we have:

\begin{proof}[Proof of Corollary~\ref{Cpreserver} for $I = (0,\rho)$]
The proof with $I = ( -\rho, \rho )$ of the second part goes through
verbatim over $( 0, \rho )$, as does the proof of the first part until
the final sentence, which uses the matrices
$B^{(1)} = \cdots = B^{(m)} = -\epsilon \Id_k$ having no entries in
$(0,\rho)$. Thus, it remains to show the following holds.

\noindent \textit{Suppose $I := ( 0, \rho )$, where $0 < \rho \leq \infty$,
and let $f( \bx ) = c x_{p_0} + d$ for all $\bx \in I^m$, where $c > 0$,
$p_0 \in [ 1 : m ]$ and $d \geq 0$.
If $k \geq 1$ and
$f[ - ]$ sends $\cS_n^{(k \bone_m^T)}(I)$ to $\cS_n^{(k)}$
for all $n \geq k$ then $d=0$.}

To show this, suppose $d> 0$ and
let $A \in \cS_3^{(1)}\bigl( ( 0, 5 ) \bigr)$
be as in Lemma~\ref{Ljudicious2}. Choose $\delta \in ( 0, d / c )$
such that $\delta < \rho / 5$, so that
$( \delta A )^{\oplus k} \in \cS_{3k}^{(k)}\bigl( [ 0,\rho) \bigr)$.
This matrix is invertible, so
$( \delta A)^{\oplus k} + \epsilon \bone_{3 k \times 3 k} \in \cS_{3
k}^{(k)}\bigl( ( 0, \rho ) \bigr)$ for sufficiently small $\epsilon > 0$,
by the continuity of eigenvalues. Then, by the assumption on $f$,
\[
B := f[ ( \delta A )^{\oplus k} + \epsilon \bone_{3 k \times 3 k}, %
\ldots, ( \delta A )^{\oplus k} + \epsilon \bone_{3 k \times 3 k} ] = %
c ( \delta A )^{\oplus k} +  (c \epsilon + d ) \bone_{3 k \times 3 k} %
\in \cS_{3 k}^{(k)}.
\]
Furthermore, we can write
\[
B = c \delta ( A^{\oplus k} + ( d / c \delta ) \bone_{3 k \times 3 k} ) + %
c \epsilon  \bone_{3 k \times 3 k },
\]
and Lemma~\ref{Ljudicious2} implies that
$A^{\oplus k} + t \bone_{3 k \times 3 k} \in \cS_{3 k}^{(k - 1)}$
for all $t > 1$, so $B \in \overline{\cS_{3 k}^{(k - 1)}}$, by
Lemma~\ref{LWeyl}.

This contradiction shows that $d = 0$, as claimed.
\end{proof}

We end by returning full circle to the first result we stated
in the one-variable setting: the classification of
inertia preservers for matrices with positive or non-negative entries.

\begin{corollary}\label{Cinertia}
Let $I := ( 0, \rho )$ or $ [ 0, \rho )$, where $0 < \rho \leq \infty$,
and let $k$ be a non-negative integer.
Given a function $f : I \to \R$, the following are equivalent.
\begin{enumerate}
\item The entrywise transform $f[-]$ preserves the inertia of all
matrices in $\cS^{(k)}(I)$.

\item The function is a positive homothety: $f( x ) \equiv c x$
for some constant $c > 0$.
\end{enumerate}
\end{corollary}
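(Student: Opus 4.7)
The plan is to prove the reverse direction trivially (positive homotheties preserve inertia) and the forward direction by splitting into the cases $k = 0$ and $k \geq 1$, then reducing $I = [0, \rho)$ to $I = (0, \rho)$ by restriction.

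For $k = 0$, I would observe that inertia preservation on $\bs^{(0)}(I)$ implies preservation of positive semidefiniteness, so Theorem~\ref{T000} with $m = 1$ supplies a power-series representation $f(x) = \sum_{n \geq 0} c_n x^n$ with $c_n \geq 0$. The rank-$1$ argument from the proof of Theorem~\ref{Tinertia}, applied to $\epsilon \bu \bu^T$ with $\bu = (1, 1/2)^T$, rules out two nonzero coefficients, and the rank-$2$ argument there, applied to $\epsilon(\bone_{3 \times 3} + \bu \bu^T)$ with $\bu = (x, y, z)^T$ of distinct positive entries, rules out the exponents $n \geq 2$; both test matrices have all entries in $(0, \rho)$ for small $\epsilon$. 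The constant case and the sign $c \leq 0$ are then excluded by direct inspection applied to, say, $\epsilon \Id_n$, yielding $f(x) = cx$ with $c > 0$.

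For $k \geq 1$ and $I = (0, \rho)$, inertia preservation gives $f[-] : \bs_n^{(k)}(I) \to \bs_n^{(k)} \subseteq \overline{\bs_n^{(k)}}$ for all $n \geq k$. I would then apply Theorem~\ref{Tmulti2} with $m = 1$, $\bk = (k)$, and $l = k$ (admissible since $l = 1$ if $k = 1$ and $l = k \leq 2k - 2$ if $k \geq 2$) to force $f$ to be a real constant or of the form $f(x) = f(0) + cx$ with $c > 0$ and $f(0) \geq 0$. Constants are excluded quickly: if $k \geq 2$, then $f[A] = d \bone_{n \times n}$ has rank at most $1$ and so cannot have $k \geq 2$ negative eigenvalues; if $k = 1$, then $\epsilon \begin{bmatrix} 1 & 2 \\ 2 & 1 \end{bmatrix} \in \bs_2^{(1)}(I)$ has inertia $(1, 0, 1)$, which no rank-$1$ constant matrix matches.

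The crux of the argument, and the main obstacle compared to the two-sided case, will be showing $f(0) = 0$ in the linear alternative: the proof of Theorem~\ref{Tinertia} used $-\epsilon \Id_k$, whose negative entries are unavailable here. In its place I would invoke Lemma~\ref{Ljudicious2}. For $A \in \bs_3^{(1)}((0, 5))$ from that lemma and any small $\delta \in (0, \rho/5)$, the matrix $\delta A^{\oplus k} \in \bs_{3k}^{(k)}((0, \rho))$ has inertia $(2k, 0, k)$, and
\[
f\bigl[ \delta A^{\oplus k} \bigr] = f(0) \bone_{3k \times 3k} + c \delta A^{\oplus k} = c \delta \Bigl( A^{\oplus k} + \tfrac{f(0)}{c \delta} \bone_{3k \times 3k} \Bigr).
\]
If $f(0) > 0$, shrinking $\delta$ so that $f(0) / (c \delta) > 1$ places the bracketed matrix in $\bs_{3k}^{(k - 1)}$ by Lemma~\ref{Ljudicious2}, contradicting inertia preservation. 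Hence $f(0) = 0$ and $f(x) = cx$. Finally, for $I = [0, \rho)$, restricting $f$ to $(0, \rho)$ yields $f(x) = cx$ on the open subinterval; applying $f[-]$ to $\bzero_{n \times n}$ (for $k = 0$) or to $M_{k + 1}(0, b) \in \bs_{k + 1}^{(k)}([0, \rho))$ from Lemma~\ref{Lprelim2} with $b \to 0^+$ (for $k \geq 1$) then pins down $f(0) = 0$.
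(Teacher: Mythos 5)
Your overall architecture mirrors the paper's: Theorem~\ref{T000} with $m=1$ for $k=0$, Theorem~\ref{Tmulti2} with $k_1 = l = k$ for $k\geq1$, and Lemma~\ref{Ljudicious2} to force $f(0)=0$. But there is a genuine gap in the key step for $k \geq 2$. The matrix $\delta A^{\oplus k}$ has all of its off-block entries equal to zero, so it lies neither in $\bs_{3k}^{(k)}\bigl((0,\rho)\bigr)$ nor in the domain on which $f$ acts (when $I=(0,\rho)$ the function $f$ is simply not defined at $0$), and the displayed formula $f\bigl[\delta A^{\oplus k}\bigr] = f(0)\bone_{3k\times 3k} + c\delta A^{\oplus k}$ is therefore undefined. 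The paper handles this by perturbing to $(\delta A)^{\oplus k} + \epsilon\bone_{3k\times 3k}$ for small $\epsilon>0$: because the matrix $A$ of Lemma~\ref{Ljudicious2} was specifically chosen to be \emph{invertible}, the perturbed matrix stays in $\bs_{3k}^{(k)}$ by continuity of eigenvalues and now has all entries in $(0,\rho)$. Applying $f$ yields $c(\delta A)^{\oplus k} + (c\epsilon+d)\bone = c\delta\bigl(A^{\oplus k} + (d/c\delta)\bone\bigr) + c\epsilon\bone$, and then Lemma~\ref{Ljudicious2} together with Lemma~\ref{LWeyl} (adding a rank-one positive semidefinite matrix cannot increase the number of negative eigenvalues) gives the contradiction. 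You need this $\epsilon\bone$ perturbation; without it the argument fails for every $k\geq2$.

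A secondary slip: for $k=0$ you propose using $\epsilon\Id_n$ as a test matrix, but for $n\geq2$ this again has zero entries and is not available over $(0,\rho)$. You should use $\epsilon\Id_1 = [\epsilon]$ to force the surviving coefficient to be positive, and observe that the rank-$2$ test matrix $\epsilon(\bone_{3\times3}+\bu\bu^T)$ already excludes constants, since a constant sends it to a rank-one matrix.

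Aside from these repairs your proof tracks the paper's. One small variation worth noting: for $I=[0,\rho)$ the paper appeals to the continuity of $f$ on $[0,\rho)^m$ supplied by Proposition~\ref{Pabsmon}(1), whereas you evaluate $f$ directly on the boundary test matrices $\bzero_{n\times n}$ (for $k=0$) and $M_{k+1}(0,b)$ with $b\to 0^+$ (for $k\geq1$) and read off $f(0)=0$ from the inertia constraint; this is a slightly more hands-on route that avoids invoking the continuity statement, and it is correct.
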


In other words, Theorem~\ref{Tinertia} holds verbatim if
$I = ( -\rho, \rho )$ is replaced by $( 0, \rho )$ or~$[ 0, \rho )$.

\begin{proof}[Proof for $I = ( 0, \rho )$]
Clearly, (2) implies (1). Conversely, if $k = 0$ then the proof
of Theorem~\ref{Tinertia} goes through in this case, using
Theorem~\ref{T000} with $m = 1$ in place of
Schoenberg's Theorem~\ref{Tschoenberg}. Otherwise,
we have $k \geq 1$ and $f[-]$ sends
$\cS_n^{(k)}\bigl( ( 0, \rho ) \bigr)$ into $\overline{\cS_n^{(k)}}$
for all $n \geq k$. The
$m=1$ case of Theorem~\ref{Tmulti2} with $k_1 = l = k$ gives that
either $f( x ) \equiv d$ for some $d \in \R$ or $f( x ) \equiv c x + d$,
with $c > 0$ and $d \geq 0$. Now we are done by following the proof of
Theorem~\ref{Tinertia}
to out the first possibility and using the italicized
assertion in the proof of Corollary~\ref{Cpreserver} to show that $d = 0$.
\end{proof}

\subsection{Proofs for matrices with non-negative entries}\label{Snonneg}

We conclude by explaining how to modify the proofs given above
to obtain Theorems~\ref{Tto0} and~\ref{Tmulti2},
together with their classification consequences in
Corollaries~\ref{Cpreserver} and~\ref{Cinertia}, when $I = [ 0,
\rho )$. We begin by establishing Proposition~\ref{Pabsmon} and
Corollary~\ref{Csample} in this context.

\begin{proof}[Proof of Proposition~\ref{Pabsmon} and
Corollary~\ref{Csample} for $I = [0,\rho)$]
Proofs of the second parts of Proposition~\ref{Pabsmon}
and Corollary~\ref{Csample} when $I = ( 0, \rho )$
go through verbatim if $I = [ 0, \rho )$, since the
test matrices used there have all their entries in $( 0, \rho )$.
It remains to prove the first part of Proposition~\ref{Pabsmon}.
This follows the same reasoning as the proof of
Proposition~\ref{Pabsmon}(1) for $I = ( -\rho, \rho )$,
but with $B^{(p)}$ there replaced by
\[
B^{(p)} := M_{k_p + 1}( a, b ) \oplus %
\bzero_{( k_{\max} - k_p ) \times ( k_{\max} - k_p )} \oplus %
A^{\oplus( l + 2 )}_p \in \cS_N^{(k_p)}
\]
for each $p \in [ 1 : m ]$, where $a$, $b \in ( 0, \rho )$ with $a < b$
and $M_{k_p + 1}( a, b )$ is as in Lemma~\ref{Lprelim2}.
\end{proof}

With these results in hand, we can now provide the final proofs.

\begin{proof}[Proof of Theorems~\ref{Tto0} and~\ref{Tmulti2}, and of
Corollaries~\ref{Cpreserver} and~\ref{Cinertia}, for $I = [0,\rho)$]
First, it is clear that (1) implies (2) in Theorem~\ref{Tto0}. If (2) holds
then $f[-]$ sends $\cS_n^{(\bk)}\bigl( ( 0, \rho ) \bigr)$
to $\cS_n^{(0)}$, for all $n \geq k_{\max}$, so
the $I = ( 0, \rho )$ part of Theorem~\ref{Tto0} implies that
the restriction of $\bx \to f( \bx )$ to $( 0, \rho )^m$ is independent of
$x_{m_0 + 1}$, $\ldots,$ $x_m$.
However, Proposition~\ref{Pabsmon}(1) implies that
$f$ is continuous on $[ 0, \rho )^m$, so this independence
extends to the whole domain of $f$ and therefore (2) implies (3).
Finally, $(3) \implies (1)$ by the Schur product theorem.
The proof of the final part of Theorem~\ref{Tto0} is identical to the
$I = ( -\rho, \rho )$ version.

For Theorem~\ref{Tmulti2}, clearly $(1) \implies (2)$.
If (2) holds then (3) holds for the restriction of $f$ to
$( 0, \rho)^m$, and again Proposition~\ref{Pabsmon}(1) allows
extension by continuity, so that (2) holds in general. The proof that
(3) implies (1) is unchanged from the $I = ( -\rho, \rho )$
case.

For both parts of Corollary~\ref{Cpreserver}, if $f$ has the
prescribed form then the entrywise transform maps
$\cS_n^{(k \bone_m^T)}( I )$
or $\overline{\cS_n^{(k \bone_m^T)}}( I )$
to $\cS_n^{(k)}$ or $\overline{\cS_n^{(k)}}$
when $I = ( -\rho , \rho )$, so the same holds when $I = [ 0, \rho )$.
The converse uses the same restriction and continuity argument as
before.

Finally, that (2) implies (1) in Corollary~\ref{Cinertia}
is immediate. For the converse, note again that the
conclusion holds for the restriction of $f$ to $( 0, \rho )$ and the
continuity of $f$ on $I = [0,\rho)$ follows from
Theorem~\ref{Tmain} and Proposition~\ref{Pabsmon}(1).
\end{proof}

\section{Transforms of complex Hermitian matrices}\label{Scomplex}

This section is devoted to understanding the class of entrywise
negativity preservers for tuples of Hermitian matrices with complex
entries. We begin by recording the elementary observation that entrywise
conjugation preserves $\cS^{(k)}_n( \C )$ for all $k \in [ 0 : n ]$ and
all $n \geq 1$, and by establishing the following lemma.

\begin{lemma}\label{L2x2}
There exists a $2 \times 2$ complex Hermitian matrix $A$ which is
singular, has one positive eigenvalue, and is such that the matrix $c A +
d \overline{A}$ is positive definite for any choice of $c$, $d \in ( 0,
\infty )$.
\end{lemma}

\begin{proof}
Choose any complex number $\zeta$ of unit modulus other than $1$ and $-1$
and let
\[
A := \begin{pmatrix} 1 & \overline{\zeta} \\ \zeta & 1 \end{pmatrix}.
\]
This matrix is Hermitian and has zero determinant and positive trace.
If $c$, $d \in ( 0, \infty )$ then
\[
\frac{1}{c + d} ( c A + d \overline{A} )
= \begin{pmatrix} 1 & \overline{\eta} \\ \eta & 1 \end{pmatrix},
\]
where $\eta = ( c \zeta + d \overline{\zeta} ) / ( c + d )$ lies in the
interior of the line segment joining $\zeta$ and $\overline{\zeta}$, so
is in the open unit disc. The result follows.
\end{proof}

We are now equipped to prove the main result.

\begin{proof}[Proof of Theorem~\ref{Tmain-complex}]
We first assume that~(3) holds, so that the function $f$ has the
representation~(\ref{Epresrep3}), and show~(2), following the proof of
Theorem~\ref{Tmulti2} for $I = ( -\rho, \rho )$.
Given $B^{(p)} \in \cS_n^{(k_p)}(\C)$ for all $p \in [ 1 : m ]$, we have that
\[
f[ \bB ] = F( \bzero ) \bone_{n \times n} + G[ \bB_{[ 1 : m_0 ]} ]
+ \sum_{p = m_0 + 1}^m \bigl( c_p B^{(p)} + d_p \overline{B^{(p)}} \bigr),
\]
where $G( \bz' ) = F( \bz' ) - F( \bzero )$ for all $\bz' := ( z_1, \ldots, z_{m_0} )$.
Since $B^{(1)}$, \ldots, $B^{(m_0)} \in \cS_n^{(0)}( \C )$,
the matrix $G[ \bB_{[ 1 : m_0 ]} ]$ is positive semidefinite
by the Schur product theorem.
Hence, by repeated applications of Lemma~\ref{LWeyl}, which remains valid
for complex Hermitian matrices, the number of negative eigenvalues of
the matrix $f[ \bB]$ is bounded above by the number of
negative eigenvalues of
\[
M := F( \bzero ) \bone_{n \times n} + %
\sum_{p = m_0 + 1}^m \bigl( c_p B^{(p)} + d_p \overline{B^{(p)}} \bigr).
\]
As in the real case, we may use spectral decomposition and repeated
applications of Lemma~\ref{LWeyl} to see that $M \in \overline{\cS_n^{(K)}}( \C )$,
where $K := \bone_{F( 0 ) < 0} + \sum_{p : c_p > 0} k_p + \sum_{p : d_p > 0} k_p$.
We conclude from condition (3)(c) that $M$ has at most $l$ negative eigenvalues. Hence so
does $f[ \bB ]$.

This shows that $(3) \implies (2)$, and the same working as for the
real case now yields that $(3) \implies (1)$. Moreover,
that $(1) \implies (2)$ is trivial.

To conclude, we show that $(2) \implies (3)$. The proof again follows similar
steps to those proving Proposition~\ref{Pabsmon} and Theorem~\ref{Tmulti2}
when $I = ( -\rho, \rho )$, and also makes use of Theorem~\ref{Tcomplex}.

\noindent\textit{Step 1:
Let $g: \C^m \to \C$ be defined by setting $g( \bz ) := f( \bz ) - f( \bzero )$
for all $\bz \in \C$. The function $g$ has a power-series representation
in $\bz$ and} $\overline{\bz}$ with non-negative coefficients as seen
in~\tup{(\ref{Efmp})}.

First, we note the complex Hermitian version of Lemma~\ref{LWeyl} implies
$g[-] : \cS_N^{( \bk )}(\C) \to \overline{\cS_N^{(l+1)}}(\C)$ for all
$N \geq k_{\max}$. For any $t_0 \in ( 0, \infty )$ and
$A_1$, \ldots, $A_m \in \cS_n^{(0)}( \C )$, the block-diagonal matrix
\[
B^{(p)} := %
( -\eps \Id_{k_p} ) \oplus %
\bzero_{( k_{\max} - k_p ) \times ( k_{\max} - k_p )} \oplus A_p^{\oplus ( l + 2 )} %
\in \cS_N^{(k_p)}( \C )
\]
for all $p \in [ 1 : m ]$, where $N = k_{\max} + ( l + 2 ) n$.
Our initial observation implies that the
block-diagonal matrix $g[\bA]^{\oplus ( l + 2 )}$ can have at most $l + 1$
negative eigenvalues, but this is only possible if $g[\bA]$ is positive
semidefinite. We are now done, by Theorem~\ref{Tcomplex}.

\noindent\textit{Step 2:
The preserver $f$ satisfies \tup{(3)(a)} and \tup{(3)(b)}.}

This is obtained by restricting $f$ to act on $\R^m$,
in which case Theorem~\ref{Tmain} gives the representation
\[
f( \bx ) = f( \bzero ) + %
\sum_{\balpha \in \Z_+^{m_0} \setminus \{ \bzero \}} c_\balpha \bx_{[1 : m_0 ]}^\balpha + %
\sum_{p = m_0 + 1}^m c'_p x_p \qquad \text{for all } \bx \in \R^m,
\]
where each coefficient $c_\balpha$ and $c_p'$ is non-negative.
On the other hand, by the previous step,
\[
f( \bx ) = f( \bzero ) + \sum_{\balpha' \in \Z_+^m \setminus \{ \bzero \}}
f_{\balpha'} \bx^{\balpha'} \quad \text{ for all } \bx \in \R^m, \quad
\text{where } f_{\balpha'} := \sum_{\balpha + \bbeta = \balpha'}
c_{\balpha, \bbeta} \geq 0;
\]
(Note that both power series are absolutely convergent on $\C^m$.)
Equating the two forms of $f$ and applying the identity theorem for
several real variables, it follows that every monomial in the series
representation for $f$ other than those occurring in~(\ref{Epresrep3})
must vanish. This proves the claim.

\noindent\textit{Step 3:
The preserver $f$ satisfies \tup{(3)(c)}.}

We let $P_+ := \{ p \in [ m_0 + 1 : m ] : c_p > 0 \}$ and
$\overline{P}_+ := \{ p \in [ m_0 + 1 : m ] : d_p > 0 \}$.
If the set $P_+ \cup \overline{P}_+$ is empty then we take
\[
B^{(p)} := \left\{\begin{array}{ll}
\bzero_{k_{\max} \times k_{\max}} & \text{if }p \in [ 1 : m_0 ],\\[1ex]
-\Id_{k_p \times k_p} \oplus \bzero_{( k_{\max} - k_p ) \times ( k_{\max} - k_p )} & %
\text{if } p \in [ m_0 + 1 : m ].
\end{array}\right.
\]
By assumption, we have that
$f[ \bB ] = F( \bzero) \bone_{k_{\max} \times k_{\max}}$
is Hermitian and has at most $l$ negative eigenvalues.
This shows (3)(c).

Henceforth, we suppose $P_+ \cup \overline{P}_+$ is non-empty.
To see that  $F( \bzero_{m_0} ) = f( \bzero_m )$ is real, we let
\[
B^{(p)} := \bzero_{( k_{\max} + 1 - k_p ) \times ( k_{\max} + 1 - k_p )} \oplus %
 ( -\Id_{k_p} ) \qquad \text{if } p \in [ 1 : m ].
\]
Then $f[ \bB ]$ is Hermitian, by assumption, so its
$( 1, 1 )$ entry $f( \bzero )$ is real.

With Lemma~\ref{L2x2} at hand, we now proceed as in the final steps in
the  proof of Theorem~\ref{Tmulti2}. We let
\begin{equation}
K' := \sum_{p \in P_+ \cap \overline{P}_+} k_p \quad \text{and} \quad %
K'' := \sum_{p \in P_+ \symdiff \overline{P}_+} k_p,
\end{equation}
where the symmetric difference
$P_+ \symdiff \overline{P}_+ := %
( P_+ \cup \overline{P}_+ ) \setminus ( P_+ \cap \overline{P}_+ )$.
We need to show that $\bone_{F( \bzero ) < 0} + 2 K' + K'' \leq l$.

If $P_+ \symdiff \overline{P}_+ = \emptyset$, we skip this paragraph.
If not, we let $\{ v_1 = {\bf 1}_{K'' + 1}, v_2, \dots, v_{K'' + 1} \}$ be an
orthogonal basis of $\R^{K'' + 1}$ and extend the vectors
$v_2$, \ldots, $v_{K'' + 1}$ to lie in $\R^{K'' + 2 K' + 1}$
by padding with zeros: we set
$\tilde{v}_j := v_j \oplus \bzero_{2 K'}$ for $j \in [ 2 : K'' + 1 ]$.
We fix a partition
$\{ J_p : p \in P_+ \symdiff \overline{P}_+ \}$ of $[ 2 : K'' + 1 ]$
such that $| J_p | = k_p$ for all $p$.
Given $\delta \in ( 0, \infty )$, we let
\[
A^{(p)} := %
\frac{\delta}{\epsilon_p | P_+ \symdiff \overline{P}_+ |} %
\bone_{K'' + 2 K' + 1} \bone_{K'' + 2 K' + 1}^T - %
\frac{1}{\epsilon_p} \sum_{j \in J_p} \tilde{v}_j \tilde{v}_j^T
\]
for all $p \in P_+ \symdiff \overline{P}_+$,
where $\epsilon_p := c_p$ if $p \in P_+ \setminus \overline{P}_+$ and
$\epsilon_p := d_p$ if $p \in \overline{P}_+ \setminus P_+$.
Note that $A^{(p)} \in \cS_{K'' + 2 K' + 1}^{(k_p)}( \C )$
for all $p \in P_+ \symdiff \overline{P}_+$, by Lemma~\ref{Lsylv}.

Taking $N := K'' + 2 K' + k_{\max}$, we use the inflation operator
$\up$ from Definition~\ref{Dinflation} to obtain the test matrix
\[
B^{(p)} := \left\{\begin{array}{ll}
\bzero_{N \times N} & \text{if } p \in [ 1 : m_0 ],\\[1ex]
\up( A^{(p)} ) & \text{if } p \in P_+ \Delta \overline{P}_+,\\[1ex]
\bzero_{( N - k_p ) \times ( N - k_p )} \oplus ( -\Id_{k_p} ) & %
\text{if } p \in [ m_0 + 1 : m ] \setminus ( P_+ \cup \overline{P}_+ ),
\end{array}\right.
\]
where
$\pi := \bigl\{ \{ 1 \}, \ldots, \{ K'' + 2 K' \}, \{ K'' + 2K' + 1, \ldots, N \} \bigr\}$.

It remains to define the test matrix $B^{(p)}$ for
$p \in P_+ \cap \overline{P}_+ = \{ p_1 < \cdots < p_r \}$.
For each $s \in [ 1 : r ]$, we let
\[
k_{s)} := \sum_{t = 1}^{s - 1} k_{p_t} \qquad \text{and} \qquad %
k_{(s} := \sum_{t = s + 1}^r k_{p_t},
\]
and take
\[
B^{(p_s)} := %
\up\bigl( \bzero_{( K'' + 1 + 2 k_{s)} ) \times ( K'' + 1 + 2 k_{s)} )} \oplus %
( -A )^{\oplus k_{p_s}} \oplus %
\bzero_{2 k_{(s} \times 2 k_{(s}} \bigr),
\]
where $A$ is given by Lemma~\ref{L2x2}.

It is easy to see that the matrix $B^{(p)} \in \cS_N^{(k_p)}( \C )$ for
all $p \in [ 1 : m ]$, by Lemma~\ref{Linertia}, so
$f[ \bB ] \in \overline{\cS_N^{(l)}}( \C )$. Furthermore, a direct
computation shows that
\begin{align*}
f[ \bB ] & = F( \bzero ) \bone_{N \times N} + %
\sum_{p \in P_+ \setminus \overline{P}_+} c_p B^{(p)} + %
\sum_{p \in \overline{P}_+ \setminus P_+} d_p \overline{B^{(p)}} + %
\sum_{p \in P_+ \cap \overline{P}_+} %
\bigl( c_p B^{(p)} + d_p \overline{B^{(p)}} \bigr)\\[1ex]
 & = \up\bigl( (F( \bzero ) + \delta \bone_{K'' > 0} ) %
 \bone_{( K'' + 2 K' + 1 ) \times ( K'' + 2 K' + 1 )} \bigr)\\
 & \qquad + \up\biggl( \Bigl( -\bone_{K'' > 0} %
 \sum_{j = 2}^{K'' + 1} v_j v_j^T \Bigr) \oplus %
 \bigoplus_{s = 1}^r ( -c_{p_s} A - d_{p_s} \overline{A})^{\oplus k_{p_s}} \biggr),
\end{align*}
where $\bone_{K'' > 0} \sum_{j = 2}^{K'' + 1} v_j v_j^T$ equals
$0 \in \R^1$ if $K'' = 0$.
By Lemma~\ref{Linertia}, the matrix $f[ \bB ]$ has
precisely as many negative eigenvalues as the matrix
\begin{multline*}
M := \bigl( F( \bzero ) + \delta \bone_{K'' > 0} \bigr) %
\bone_{( K'' + 2 K' + 1 ) \times ( K'' + 2 K' + 1) }\\
+ \biggl( \Bigl( -\bone_{K'' > 0} %
\sum_{j = 2}^{K'' + 1} v_j v_j^T \Bigr) \oplus %
 \bigoplus_{s = 1}^r ( -c_{p_s} A - d_{p_s} \overline{A})^{\oplus k_{p_s}} \biggr).
\end{multline*}
Lemma~\ref{Lsylv} and Lemma~\ref{L2x2} imply that the second term in this sum
has exactly $K'' + 2 K'$ negative eigenvalues.
Moreover, the vector $\bone_{K'' + 2 K' + 1}$ is not in the
column space of this second term, since it is orthogonal to each
vector $\tilde{v}_j$ and the column spaces of the remaining block factors
\[
\bzero_{( K'' + 1 + 2 k_{s)} ) \times ( K'' + 1 + 2 k_{s)} )} \oplus %
( -c_{p_s} A - d_{p_s} \overline{A} )^{\oplus k_{p_s}} \oplus %
\bzero_{2 k_{(s} \times 2 k_{(s}}
\]
together span $\bzero_{K'' + 1} \oplus \C^{2 K'}$.

To conclude the proof, we consider two cases.
If $F( \bzero ) \geq 0$ then Lemma~\ref{Lsylv} gives that
$M$ has exactly $K'' + 2 K'$ negative eigenvalues, so
$K'' + 2 K' \leq l$, as desired. If, instead, we have that
$F( \bzero ) < 0$ then we can ensure that
$F( \bzero ) + \delta \bone_{K'' > 0} < 0$, shrinking $\delta$ if
necessary, and then $M$ has exactly $K'' + 2 K' + 1$ negative
eigenvalues. Again, this is at most $l$, which concludes the proof.
\end{proof}

We conclude by adapting Corollaries~\ref{Cpreserver} and~\ref{Cinertia}
to the complex setting. As conjugate maps are involved, we re-state both
results.

\begin{corollary}
Let $k$ and $m$ be positive integers and let $f : I^m \to \C$.
\begin{enumerate}
\item The entrywise transform $f[ - ]$ sends $\cS_n^{(k \bone_m^T)}( \C )$
to $\cS_n^{(k)}$ for all $n \geq k$ if and only if $f( \bz ) \equiv c z_{p_0}$
or $f( \bz ) \equiv c \overline{z_{p_0}}$ for a constant $c > 0$ and some
$p_0 \in [ 1 : m ]$, or, when $k = 1$, we may also have $f( \bz ) \equiv -c$ for some
$c > 0$.

\item The entrywise transform $f[ - ]$ sends
$\overline{\cS_n^{(k \bone_m^T)}}( \C )$ to
$\overline{\cS_n^{(k)}}$ for all $n \geq k$
if and only if $f( \bz ) \equiv c z_{p_0} + d$ or $f( \bz ) \equiv c \overline{z_{p_0}} + d$
for some $p_0 \in [ 1: m ]$, with either $c = 0$ and $d \in \R$, or $c > 0$ and $d \geq 0$.

\item If $m = 1$ and $k' \in \{ 0, k \}$, the entrywise transform
$f[ - ]$ preserves the inertia of all matrices in $\cS^{(k')}( \C )$ if and only if
$f( z ) \equiv c z$  or $f( z ) \equiv c \overline{z}$ for some $c > 0$.
\end{enumerate}
\end{corollary}

\begin{proof}
The proof of the first two parts are identical to the proof of the $I = ( -\rho,\rho )$
case of Corollary~\ref{Cpreserver}, except that we use
Theorem~\ref{Tmain-complex} instead of Theorem~\ref{Tmulti2}.
To prove~(3), the reverse inclusion is immediate; conversely, 
Theorem~\ref{Tmain-complex} gives that either $f( z ) \equiv c z + d$ or
$f( z ) \equiv c \overline{z} + d$, with $c \geq 0$ and with
$d \geq 0$ if $c > 0$. Now restricting to matrices
with real entries, we are done by the corresponding implication in Theorem~\ref{Tinertia}.
\end{proof}

\appendix

\section{Absolutely monotone functions of several variables}\label{Appendix}

The purpose of this Appendix is to provide the proof of a result on absolutely
monotone functions of several variables that seems to not be readily available
in the literature, but was used in previous work~\cite{BGKP-hankel}
and is relevant to the present paper.

Given an open interval $I \subset \R$ and a positive integer $m$,
let $\bx = ( x_1, \ldots, x_m ) \in I^m$ and
$\balpha = ( \alpha_1, \ldots, \alpha_m ) \in \Z_+^m$. Set
$| \balpha | := \alpha_1 + \cdots + \alpha_m$.

Recall that a smooth function $f : I^m \to \R$ is \emph{absolutely monotone} if
\[
\partial^\balpha f( \bx ) = %
\frac{\partial^{| \balpha |} f}%
{\partial x_1^{\alpha_1} \cdots \partial x_m^{\alpha_m}}%
( x_1, \ldots, x_m )
\geq 0 \quad \textrm{for all } \balpha  \in \Z_+^m \textrm{ and } \bx \in I^m.
\]
If, instead, the function $f$ is such that
\[
(-1)^{| \balpha |} \partial^\balpha f( \bx ) \geq 0 %
\quad \textrm{for all } \balpha  \in \Z_+^m \textrm{ and } \bx \in I^m
\]
then $f$ is said to be \emph{completely monotone}.

A function $f : [ 0, \rho )^m \to \R$, where $0 < \rho \leq \infty$,
is said to be \emph{absolutely monotone} if the restriction of~$f$
to~$( 0, \rho )^m$ is absolutely monotone, as defined above,
and $f$ is continuous on $[ 0, \rho )^m$.
Step~II in the proof of \cite[Theorem 5]{Ressel} shows that
such a function has non-negative one-sided derivatives at
the boundary points of its domain.

Above and in previous work~\cite{BGKP-hankel}, we use
the fact that absolutely monotone functions 
have power-series representations with non-negative Maclaurin coefficients.
This result is used for functions with domains of the form $( 0, \rho )^m$, 
where $0 < \rho \leq \infty$ and $m \geq 1$. For $m = 1$, this is a special case
of Bernstein's theorem \cite{Bernstein}, and for $m = 2$ the power-series representation is derived in Schoenberg's paper \cite{Schoenberg33}
using completely monotone functions.

However, we were unable to find in the literature a reference for the case $m > 2$.
When $I = [ 0, \rho )$ and $\rho$ is finite, this representation theorem, for all
$m \geq 1$, is found in Ressel's work \cite[Theorem~8]{Ressel};
the extension to the case $\rho = \infty$ is immediate,
by the identity theorem.

To fill this gap, we state the following theorem and provide its proof.

\begin{theorem}\label{Tabsmon}
Let $I = ( 0, \rho )$, where $0 < \rho \leq \infty$,
and let $m$ be a positive integer.
The smooth function $f : I^m \to \R$ is absolutely monotone if
and only if $f$ is represented on~$I^m$ by a power series with
non-negative Maclaurin coefficients:
\[
f( \bx ) = \sum_{\balpha \in \Z_+^m} c_\balpha \bx^\balpha %
\qquad \textrm{for all } \bx \in I^m, %
\textrm{ where } c_\balpha \geq 0 \textrm{ for all } \balpha.
\]
\end{theorem}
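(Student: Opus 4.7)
The easy direction (2) implies (1) reduces to the standard fact that a multivariable power series with non-negative coefficients which converges on $(0,\rho)^m$ can be differentiated termwise arbitrarily often, producing another such series; evaluating at any point of $(0,\rho)^m$ then gives $\partial^\balpha f \geq 0$ for every $\balpha \in \Z_+^m$. I would dispose of this first.

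For the main direction, my plan is to reduce to the half-open-interval case, where the result is already available as \cite[Theorem~8]{Ressel} (with the identity theorem extending from finite to infinite $\rho$), and then pass to a boundary limit. For each $\epsilon \in (0, \rho)$, the translate
\[
f_\epsilon : [ 0, \rho - \epsilon )^m \to \R, \qquad
f_\epsilon( \by ) := f( \by + \epsilon \bone_m )
\]
is smooth on a neighbourhood of its closed-at-zero domain and inherits non-negative partial derivatives from $f$, so it is absolutely monotone in Ressel's sense. Applying Ressel's theorem produces
\[
f_\epsilon( \by ) = \sum_{\balpha \in \Z_+^m} c_{\balpha, \epsilon} \by^\balpha
\qquad \text{on } [ 0, \rho - \epsilon )^m,
\qquad c_{\balpha, \epsilon} := \frac{\partial^\balpha f( \epsilon \bone_m )}{\balpha!} \geq 0.
\]

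Each $\partial^\balpha f$ is itself absolutely monotone on $(0,\rho)^m$, hence non-decreasing in every coordinate, so $\epsilon \mapsto c_{\balpha, \epsilon}$ is non-decreasing and admits a non-negative limit $c_\balpha := \lim_{\epsilon \to 0^+} c_{\balpha, \epsilon}$. To finish, I would establish $f( \bx ) = \sum_\balpha c_\balpha \bx^\balpha$ on $(0, \rho)^m$ by dominated convergence, as follows. Given $\bx \in (0, \rho)^m$, choose $\epsilon_0 \in ( 0, \min_p ( \rho - x_p ) )$, so that $\bx + \epsilon_0 \bone_m \in (0, \rho)^m$; then monotonicity in $\epsilon$ yields the bound $c_{\balpha, \epsilon} \bx^\balpha \leq c_{\balpha, \epsilon_0} \bx^\balpha$ for every $\epsilon \in (0, \epsilon_0)$ and every $\balpha$, while the dominating series
\[
\sum_{\balpha \in \Z_+^m} c_{\balpha, \epsilon_0} \bx^\balpha
= f_{\epsilon_0}( \bx ) = f( \bx + \epsilon_0 \bone_m )
\]
is finite. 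Sending $\epsilon \to 0^+$ and interchanging sum and limit over the counting measure on $\Z_+^m$ turns the identity $f( \bx + \epsilon \bone_m ) = \sum_\balpha c_{\balpha, \epsilon} \bx^\balpha$ into $f( \bx ) = \sum_\balpha c_\balpha \bx^\balpha$, using continuity of $f$ on the left-hand side and dominated convergence on the right. The one genuine obstacle is precisely this final interchange; the monotone behaviour of the coefficients together with the judicious choice of $\epsilon_0$ is what makes it go through, and everything else is bookkeeping.
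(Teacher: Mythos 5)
Your proof is correct, and it genuinely differs from both arguments offered in the paper, which I find a worthwhile contrast. The paper gives two paths. The first establishes that $f$ is real analytic on $(0,\rho)^m$ via the substitution $g(\bx) = f(e^{-x_1},\ldots,e^{-x_m})$, completely monotone functions, and the Henkin--Shananin theorem, then applies Ressel's theorem to a translate and invokes the identity theorem to glue. The second (due to Ressel) uses convexity of $t \mapsto f(\bx_0 + t\bone_m)$ to define a one-sided boundary limit $\wt f(\bx_0)$, verifies that $\wt f$ satisfies the forward-difference definition of absolute monotonicity on $[0,\rho)^m$, and then applies Ressel's theorem once. Your approach also applies Ressel's theorem to the translates $f_\epsilon$, but rather than building a continuous extension of $f$ to the closed corner, you extract the power series at every positive level $\epsilon$, observe that the Taylor coefficients $c_{\balpha,\epsilon} = \partial^\balpha f(\epsilon\bone_m)/\balpha!$ are non-decreasing in $\epsilon$ (since each $\partial^\balpha f$ is itself absolutely monotone), and pass to the limit $\epsilon \to 0^+$ directly in the series identity $f(\bx + \epsilon\bone_m) = \sum_\balpha c_{\balpha,\epsilon}\bx^\balpha$ via dominated convergence against the majorant $\sum_\balpha c_{\balpha,\epsilon_0}\bx^\balpha = f(\bx + \epsilon_0\bone_m)$. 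This avoids the Henkin--Shananin input entirely, avoids the identity-theorem gluing, and avoids having to check the forward-difference characterization for the extension; the trade-off is that it leans on the monotonicity of the coefficients, which is the one observation that makes the limit interchange go through, as you correctly flag.

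One small point worth making explicit: the identification $c_{\balpha,\epsilon} = \partial^\balpha f(\epsilon\bone_m)/\balpha!$ for the coefficients produced by Ressel's theorem is immediate because $\epsilon\bone_m$ is an interior point of $(0,\rho)^m$ where $f$ is smooth, the power series from Ressel's theorem converges on a full polydisk around the origin, and the two smooth functions agree on the open set $(0,\rho-\epsilon)^m$, so all derivatives match as one approaches $\bzero$ from the interior. You state this without comment, and it is fine, but it is the step that lets the monotonicity of $\partial^\balpha f$ enter the argument.
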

Note that any function with such a representation extends to a
real-analytic function on the domain $( -\rho, \rho )^m$.

Theorem~\ref{Tabsmon} was implicitly used to prove the
multi-variable version of Schoenberg's Theorem~\ref{T000}
with $I = ( 0, \rho )^m$
in our previous work \cite{BGKP-hankel}, where it is used in turn
to show this theorem for $I = [ 0, \rho )^m$ and
$I = ( -\rho, \rho )^m$. The same result is used in the present work,
to prove Proposition~\ref{Pabsmon}(1) for all three choices of $I$,
leading to the characterization results of Theorems~\ref{Tto0}
and~\ref{Tmulti2} and their corollaries.

We now turn to the proof of Theorem~\ref{Tabsmon}. While it is likely
that it would be possible to show this result for $m > 2$ by adapting the proof
for the $m=2$ case given by Schoenberg~\cite{Schoenberg33},
we proceed differently here: we use Ressel's theorem and a natural group
operation on convex cones, well known in Hardy-space theory.

\begin{proof}[Proof of Theorem~\ref{Tabsmon}]
The reverse implication is clear. To prove the other,
we note that, from the previous remarks, we need only to show that
an absolutely monotone function on $( 0, \rho )^m$
has a continuous extension to $[ 0, \rho )^m$ when $\rho$ is finite.
Moreover, by scaling, we may assume that $\rho = 1$.
 We offer two different paths to show that such an extension exists.

\noindent\textit{Path 1:} We note first that
\[
g : ( 0, \infty )^m \to [ 0, \infty); \ %
\bx \mapsto f( e^{-x_1}, \ldots, e^{-x_m} )
\]
is completely monotone, because an inductive argument shows that
\[
( \partial^\balpha g )( \bx ) = ( -1 )^{| \balpha |} g( \bx ) %
p_\balpha( e^{-x_1}, \ldots, e^{-x_m} ) %
\qquad \textrm{for all } \balpha \in \Z_+^m,
\]
where $p_\balpha$ is a polynomial with non-negative coefficients.
Hence, by \cite[Corollaries~2.1 and~2.2]{HS},
the function $g$ is real analytic on $( 0, \infty )^m$.
Composition with the change of coordinates
\[
T : ( 0, 1 )^m \to ( 0, \infty )^m; \ %
( x_1, \ldots, x_m ) \mapsto ( -\log x_1, \ldots, -\log x_m ) 
\]
now shows that $f = g \circ T$ is real analytic on $( 0, 1 )^m$.

Now fix $\epsilon \in ( 0, 1 / 2 )$ and consider the function.
\[
h : ( -\epsilon, 1 - \epsilon )^m \to \R; \ %
\bx \mapsto f( \bx + \epsilon \bone_m ).
\]
It is immediate that $h$ is absolutely monotone on $[ 0, 1 - \epsilon)^m$,
so it is represented there by a power series with non-negative Maclaurin
coefficients, by \cite[Theorem~8]{Ressel}:
\begin{equation}\label{Eseries}
f( \bx + \epsilon \bone_m) = h( \bx ) = %
\sum_{\balpha \in \Z_+^m} d_\balpha \bx^\balpha
\qquad \textrm{for all } \bx \in [ 0, 1 - \epsilon )^m,
\end{equation}
where $d_\balpha \geq 0$ for all $\balpha \in \Z_+^m$. Moreover,
this series is absolutely convergent for all
$\bx \in [ -\epsilon, 1 - \epsilon )^m$, since $\epsilon < 1/2$.
Thus $h$ admits a continuous extension to the boundary.
Since $f$ is real analytic and agrees with the power
series~(\ref{Eseries}) on an open set, it admits a continuous
extension to $[ 0, 1 )^m$, as desired.\qed

\noindent\textit{Path 2:} The following approach was
communicated to us by Paul Ressel. Given any
point $\bx_0 \in [ 0, 1 )^m$, there exists $\epsilon_0 > 0$
such that $\bx_0 + \epsilon_0 \bone_m \in [ 0, 1 )^m$. If
\[
g_{\bx_0} : ( 0, \epsilon_0 ] \to \R; \ %
t \mapsto f(\bx_0 + t \bone_m )
\]
then $g_{\bx_0}'' \geq 0$ and so $g_{\bx_0}$ is convex on
$( 0, \epsilon_0 ]$. Hence $g_{\bx_0}( 0^+ )$ exists, is finite
and agrees with $g_{\bx_0}( 0 ) = f( \bx_0 )$ if $\bx_0 \in ( 0, 1 )^m$.
We can now extend $f$ to $[ 0, 1 )^m$ by setting
\[
\wt{f} : [ 0, 1 )^m \to \R; \ \bx_0 \mapsto g_{\bx_0}( 0^+ ).
\]
One can verify that this function satisfies the ``forward difference''
definition of absolute monotonicity, as given on
\cite[pp.~259--260]{Ressel}. Hence, by \cite[Theorem 8]{Ressel},
$\wt{f}$ is represented by a convergent series with non-negative
Maclaurin coefficients, and therefore so is~$f$.
\end{proof}

\subsection*{Acknowledgements}

The authors hereby express their gratitude to the University of
Delaware for its hospitality and stimulating working environment, and in
particular to the Virden Center in Lewes, where this work was
initiated.
The authors also thank the Institute for Advanced Study,
Princeton, and the American Institute of Mathematics, Pasadena, for their
hospitality while this work was continued.
A.K.~thanks Paul Ressel for useful discussions related to
Theorem~\ref{Tabsmon}. We also thank the referee for helpful suggestions.

A.B.~was partially supported by Lancaster University while some of this
work was undertaken, and is grateful for the support of the SPARC
project \textit{Noncommutative Analysis and Probability},
funded by the MHRD, Government of India,
and the hospitality of the Indian Institute of Science, Bangalore,
which led to the concluding portion of this work.

D.G.~was partially supported by a University of Delaware Research
Foundation grant, by a Simons Foundation collaboration grant for
mathematicians, by a University of Delaware strategic initiative
grant, and by NSF award \#2350067.

A.K.~was partially supported by the Ramanujan Fellowship
SB/S2/RJN-121/2017 and SwarnaJayanti Fellowship grants SB/SJF/2019-20/14
and DST/SJF/MS/2019/3 from SERB and DST (Govt.~of India),
a Shanti Swarup Bhatnagar Award from CSIR (Govt.\ of India), and
the DST FIST program 2021 [TPN--700661]. He is also grateful for
the support (via IISc) of the aforementioned SPARC project
\textit{Noncommutative Analysis and Probability},
and the hospitality of the University of Plymouth, UK,
where this work was concluded.

M.P.~was partially supported by a Simons Foundation collaboration
grant.



\end{document}